\documentclass{amsart}
\usepackage{latexsym}
\usepackage[usenames, dvipsnames]{color}          
\usepackage{amsfonts}                
\usepackage{amsmath}    
\numberwithin{equation}{section}

\usepackage{varioref}
\usepackage[T1]{fontenc}
\usepackage{amsthm}         
\usepackage{amssymb}        

\usepackage[mathcal]{euscript} 
\usepackage{mathrsfs}

\textwidth = 15.3cm
\setlength{\topmargin}{-0cm}
\setlength{\oddsidemargin}{1,2mm}
\setlength{\evensidemargin}{-0,4mm}
\setlength{\parindent}{8mm}

%
%
%

\newtheorem{thm}{Theorem}[section]
\newtheorem{cor}[thm]{Corollary}
\newtheorem{lem}[thm]{Lemma}
\newtheorem{prop}[thm]{Proposition}



\theoremstyle{definition}
\newtheorem{rem}[thm]{Remark}
\newtheorem{ex}[thm]{Example}
\newtheorem{defi}[thm]{Definition}


\newcommand{\R}{\mathbb{R}}
\newcommand{\RN}{\mathbb{R}^N}
\newcommand{\G}{\mathbb{G}}

\renewcommand{\d}{\mathrm{d}}
\newcommand{\de}{\partial}
\newcommand{\LL}{\mathcal{L}}
\newcommand{\Lop}{\mathcal{L}}

\newcommand{\Lloc}[1]{L^1_{\mathrm{loc}}(#1)}

\newcommand{\dela}{\delta_{\lambda}}
\newcommand{\longto}{\longrightarrow}

\numberwithin{equation}{section}

\title[Fundamental solution for homogeneous H\"ormander operators]
 {The existence of a global fundamental solution\\ for homogeneous H\"ormander operators\\
  via a global Lifting method}
 \author{Stefano Biagi}
 \address{Dipartimento di Matematica,
         Universit\`{a} degli Studi di Bologna\\
         Piazza di Porta San Donato, 5 - 40126 Bologna, Italy\\
         Fax.: +39-51-2094490.}
 \email{stefano.biagi3@unibo.it}
 \author{Andrea Bonfiglioli}
 \address{Dipartimento di Matematica,
         Universit\`{a} degli Studi di Bologna\\
         Piazza di Porta San Donato, 5 - 40126 Bologna, Italy\\
         Tel.: +39-51-2094498, Fax.: +39-51-2094490.}
 \email{andrea.bonfiglioli6@unibo.it}

\begin{document}
\begin{abstract}
 We prove the existence of a global fundamental solution $\Gamma(x;y)$ (with pole $x$)
     for any H\"ormander operator $\LL=\sum_{i=1}^m X_i^2$ on $\R^n$ which is $\dela$-homogeneous of degree $2$. Here homogeneity is meant with respect to a
 family of non-isotropic diagonal maps $\dela$ of the form $\dela(x)=(\lambda^{\sigma_1}x_1,\ldots,\lambda^{\sigma_n}x_n)$, with
 $1=\sigma_1\leq\cdots\leq\sigma_n$. Due to a global lifting method for homogeneous operators proved by Folland in [\emph{On the Rothschild-Stein lifting theorem},
 Comm. PDEs, 1977], there exists a Carnot group $\G$ and a polynomial surjective map $\pi:\G\to \R^n$ such that
 $\LL$ is $\pi$-related to a sub-Laplacian $\LL_\G $ on $\G$. We show that it is always possible to perform a (global) change of variable
 on $\G$ such that the lifting map $\pi$ becomes the projection
 of $\G\equiv \R^n\times \R^p$ onto $\R^n$. We prove that an integration argument over the (non-compact) fibers of $\pi$ provides
 a fundamental solution for $\LL$. Indeed, if $\Gamma_\G(x,{x}';y,{y}')$ ($x,{x}'\in\R^n$; $y,{y}'\in\R^p$)
 is the fundamental solution of $\LL_\G $, we show that
 $\Gamma_\G(x,0;y,{y}')$ is always integrable w.r.t.\,${y}'\in \R^p$, and its $y'$-integral
 is a fundamental solution for $\LL$.
\end{abstract}
 \maketitle


\section{Introduction} \label{sec.intro}
 Let us consider, on Euclidean space $\R^n$, a second-order
  linear partial differential operator
  (PDO, in the sequel) $\LL$ with smooth coefficients,
  and let $\mathcal{H}=\LL-\de_t$ be
  the associated Heat operator on $\R^{n+1}\equiv \R^n_x\times\R_t$.
  Roughly put, if $\mathcal{H}$ admits a Heat kernel
  $\{p_t(x;y)\}_{t > 0}$ which is integrable on $(0,\infty)$ w.r.t.\,$t$, then
  in many meaningful cases  the
  function
  \begin{equation}\label{saturintro}
   \Gamma: \{(x,y)\in\R^n\times \R^n: x \neq y\} \longrightarrow \R, \quad
   \Gamma(x;y) = \int_0^\infty p_t(x;y)\,\d t,
  \end{equation}
  is a fundamental solution for the operator $\LL$ on $\R^n$.
  To fix the naming, we say that $\Gamma(x;y)$ in \eqref{saturintro} is obtained from $p_t(x;y)$ via the \emph{saturation}
  of the variable $t$; since, for any function $u(x,t)$ on $\R^{n+1}$ which is independent of $t$, $\mathcal{H}u(x,t)$
  coincides with $\LL u(x)$ for any $x\in\R^n$ and any $t\in \R$, we say that the operator $\mathcal{H}$ is a \emph{lifting} of $\LL$.

  The same lifting/saturation process also occurs in another classical case, which -for its simplicity- is
  worth mentioning: if $\Delta_n=\sum_{j=1}^n (\de_{x_j})^2$ is the usual Laplace operator
  on $\R^n$, then $\Delta_{n+p}$ is a lifting of  $\Delta_n$ whenever $p\geq 1$. If $n>2$, the fundamental solution
  of $\Delta_n$ (a constant multiple of $\|x\|^{2-n}$) can be recovered by that of $\Delta_N$ by saturation:
  this is easily seen by the integral identity
  $${c}\,{\Big(\sqrt{x_1^2+\cdots+x_n^2}\Big)^{2-n}}=
  \int_{\R^p}{\Big(\sqrt{x_1^2+\cdots+x_n^2+t_1^2+\cdots+t_p^2}\Big)^{2-n-p}}\,\d t_1\ldots \d t_p, $$
  holding true for some constant $c$ whenever $n>2$, the identity following
  from the change of variable $t=\|x\|\,\tau$ (with $\tau\in \R^p$).
  Incidentally,  the above integral is convergent if $n>2$.\medskip

  In the literature, there are many other examples of lifting involving meaningful PDOs: for instance, consider the case of the
  following Grushin operator
\begin{equation}\label{intro.grushin}
 \text{$\mathcal{G}=(\de_{x_1})^2+(x_1\,\de_{x_2})^2$\quad on $\R^2$,}
\end{equation}
 a lifting of which is given by the PDO
  $$\widetilde{\mathcal{G}}=(\de_{x_1})^2+(\de_{x_3}+x_1\,\de_{x_2})^2,\qquad (x_1,x_2,x_3)\in \R^3.$$
  In its turn, the latter is nothing but a copy (via a change of variable)
  of the well known Kohn-Laplacian on the first Heisenberg group.
  Parallel to what happens in the mentioned case of $\LL$ and its heat lifting $\mathcal{H}$ or in the case of
  $\Delta_n$ and $\Delta_{n+p}$,
  the idea of obtaining a fundamental solution for the Grushin operator $\mathcal{G}$ via a saturation argument
  applied to the (explicit!) fundamental solution of $\widetilde{\mathcal{G}}$ has already been
  exploited in the literature (e.g., in Bauer, Furutani, Iwasaki \cite{BauerFurutaniIwasaki};
  see also Calin, Chang, Furutani, Iwasaki \cite[Sect. 10.3]{CalinChangFurutaniIwasaki} for the Heat kernel;
  more generally,
  see Beals, Gaveau, Greiner, Kannai \cite{BealsGaveauGreinerKannai2} for operators lifting to
  sub-Laplacians on $2$-step Carnot groups).\medskip

  The main aim of this paper is to obtain a process of lifting/saturation in a
  quite general setting: for instance, we shall show that
  any H\"ormander operator $\LL=\sum_{i=1}^m X_i^2$ on $\R^n$, which is
  the sum of squares of vector fields $X_1,\ldots,X_m$
  $\dela$-homogeneous of degree $1$,
  admits a lifting which is a sub-Laplacian $\LL_\G $ on some Carnot group $\G$. Here homogeneity is meant with respect to a
 family of non-isotropic diagonal maps $\dela$ of the form
\begin{equation}\label{intro.dela}
 \dela(x)=(\lambda^{\sigma_1}x_1,\ldots,\lambda^{\sigma_n}x_n),\quad
 \text{with
 $1=\sigma_1\leq\cdots\leq\sigma_n$.}
\end{equation}
 Most importantly, since $\LL_\G $ admits a global
 fundamental solution $\Gamma_\G$ (see Folland \cite{Folland75}), we shall prove that a global
 fundamental solution for $\LL$ \emph{can always be obtained} by a saturation argument from  $\Gamma_\G$
 (provided that $\sum_{i=1}^n\sigma_i$, the so-called homogeneous dimension of $(\R^n,\dela)$, is $>2$).
  This gives an \emph{existence} result for a fundamental solution $\Gamma$ for $\LL$, together with an integral representation
  for $\Gamma$ in terms of $\Gamma_\G$. In selected cases (namely when $\Gamma_\G$ is explicitly known, possibly under an integral form)
  one also obtains a rather explicit (integral) representation for $\Gamma$.
  This is possible also because $\G$ can be constructed in a quite simple way.\medskip

 We describe more closely our procedure.
 The Lifting method introduced by
 Rothschild and Stein \cite{RothschildStein}
 is a remarkable tool for obtaining a lifting $\widetilde{\LL}$ of any
 H\"ormander operator $\LL$, with $\widetilde{\LL}$ \emph{locally} approximated
 by a sub-Laplacian on a (free) Carnot group. Due to our purpose to obtaining a \emph{global}
 fundamantal solution, this general result does not seem applicable.
 What is instead perfectly suited to our case is the global Lifting for \emph{homogeneous} operators proved by Folland
 in \cite{Folland2}, which we now recall.
 Let $X_1,\ldots,X_m$ be $C^\infty$ vector fields in $\R^n$, homogeneous of degree $1$ w.r.t.
 $\dela$ in \eqref{intro.dela}, that is
 $$X_i(f\circ\dela)= \lambda\,(X_if)\circ \dela \qquad \forall\,\,\lambda>0,\quad\forall\,\,f\in C^\infty(\R^n,\R),$$
  for any $i=1,\ldots,m$.
 We assume that $X_1,\ldots,X_m$ are linearly independent (as linear differential operators)
 and they satisfy H\"ormander's hypoellipticity  condition.
\begin{ex}\label{examp.Grushin}
  For example, the vector fields $X_1=\de_{x_1}$ and $X_2=x_1\de_{x_2}$ associated with the Grushin operator
 $\mathcal{G}$ on $\R^2 $ in \eqref{intro.grushin}
 fulfil these assumptions\footnote{The linear dependence of the vectors $X_1(x)\equiv (1,0)$ and $X_2(x)\equiv (0,x_1)$
 when $x_1=0$ is not in contrast with their linear independence as PDOs in the linear space of the smooth vector fields on $\R^2$.}
 with the family of dilations $\dela(x_1,x_2)=(\lambda^{1} x_1,\lambda^{2} x_2)$.
 The same is true of $X_1=\de_{x_1}$, $X_2=x^2_1\de_{x_2}$ on $\R^2$ (modeling a higher-step Grushin operator) with dilations
  $\dela(x_1,x_2)=(\lambda^{1} x_1,\lambda^{3} x_2)$.
\end{ex}
 Let $\mathfrak{a}=\textrm{Lie}\{X_1,\ldots,X_m\}$ be the Lie algebra generated by $X_1,\ldots,X_m$, i.e.,
 the smallest Lie subalgebra of the smooth vector fields on $\R^n$ containing $X_1,\ldots,X_m$.
 Then, by the $\dela$-homogeneity of the $X_i$'s it is easy to recognize that $\mathfrak{a}$ is  {nilpotent} and  {stratified}, that is
 $$\mathfrak{a}=\mathfrak{a}_1\oplus \mathfrak{a}_2\oplus\cdots \oplus \mathfrak{a}_r,\qquad\text{and}\quad
   \begin{cases}
   [\mathfrak{a}_1,\mathfrak{a}_{i-1}]=\mathfrak{a}_i & \hbox{$2\leq i\leq r$} \\
   [\mathfrak{a}_1,\mathfrak{a}_{r}]=\{0\}, &
   \end{cases}$$
 where $\mathfrak{a}_1=\textrm{span}\{X_1,\ldots,X_m\}$.
 We equip $\mathfrak{a}$ with the  {Campbell-Baker-Hausdorff} operation
\begin{equation}\label{CBHoper}
  X\diamond Y=X+Y+\frac{1}{2}[X,Y]+\frac{1}{12}[X[X,Y]]-\frac{1}{12}[Y[X,Y]]+\cdots,
\end{equation}
 the sum being finite due to the nilpotency  of $\mathfrak{a}$.
 We can choose a linear basis $\mathcal{A}$ of $\mathfrak{a}$ by grouping
 together some fixed bases of  $\mathfrak{a}_1,\mathfrak{a}_2,\ldots,\mathfrak{a}_r$
 and such that $X_1,\ldots,X_m$ belong to $\mathcal{A}$.
 Setting $N:=\textrm{dim}(\mathfrak{a})$, we
 identify $\mathfrak{a}$ with $\R^N$ via coordinates w.r.t.\,$\mathcal{A}$.

 Then $\mathbb{A}=(\mathfrak{a},\diamond)$ is a {Carnot} group
 (with underlying manifold $\mathfrak{a}\equiv \R^N$) with Lie algebra $\textrm{Lie}(\mathbb{A})$ isomorphic to the Lie algebra $\mathfrak{a}$.
 But $\mathfrak{a}$ is not only a Lie algebra (and a Carnot group, when equipped with the operation $\diamond$): it is a Lie algebra \emph{of vector
 fields on $\R^n$}, so it can be ``exponentiated'', that is one can consider the integral curves in $\R^n$ of any $X\in \mathfrak{a}$.
 Due to $\dela$-homogeneity reasons, any such $X$ is global, i.e., its integral curves are defined for any real time.
 Hence the following map is well defined
 \begin{equation*}
    \pi:\mathfrak{a}\longrightarrow \R^n,\quad \pi(X):=\Big(\Psi_t^X(0)\Big)\Big|_{t=1},
 \end{equation*}
 where $t\mapsto \Psi^X_t(0)$ denotes the integral curve of $X$ at time $t$ starting from $0\in\R^n$ at $t=0$.

 Let us denote by $J_1,\ldots,J_N$
 the basis of the left-invariant vector fields on $\mathbb{A}$ coinciding with
 the coordinate\footnote{We recall that we fixed on $\mathfrak{a}\equiv \RN$
 the coordinates w.r.t.\,a fixed basis $\mathcal{A}$ adapted to the stratification of $\mathfrak{a}$.}
 partial derivatives at $0\in\RN\equiv \mathfrak{a}$. It is not difficult to recognize that, thanks to the H\"ormander
 condition satisfied by $X_1,\ldots,X_m$, the vector fields $J_1,\ldots,J_m$ form a basis of Lie-generators
 for $\textrm{Lie}(\mathbb{A})$, so that  $J_1^2+\cdots+J_m^2$
 is a sub-Laplacian on the Carnot group $\mathbb{A}$ (in the sense of \cite[Section 1.5]{BLU}).

 What Folland proved in \cite{Folland2} are the following remarkable facts: $\pi$ is a {surjective} map; it is
 polynomial;\footnote{See the previous footnote.} and, for $i=1,\ldots,m$, $X_i$ is $\pi$-related to $J_i$, that is:
\begin{equation}\label{intro.lift}
 \d \pi(J_i)_a=(X_i)_{\pi(a)} \qquad \forall\,a\in \mathbb{A},\,\,\forall\,\,i=1,\ldots,m.
\end{equation}
 In particular, the sub-Laplacian $J_1^2+\cdots+J_m^2$ is $\pi$-related to $\LL=X_1^2+\cdots+X_m^2$.\medskip

 Our goal is to perform a change of variable on $\mathbb{A}\equiv \RN$ turning it into
 a homogeneous Carnot group $\G=(\RN,\star,d_\lambda)$ (in the sense of \cite[Section 1.4]{BLU}) and, most
 importantly, turning $\pi$ into a genuine projection map; namely (without changing notation from the old $\pi$ to the
 map $\pi$ read in the new coordinates after the change of variable) we have
 $$\pi:\R^{N}\longrightarrow\R^n\qquad \pi(x_1,\ldots,x_n,x_{n+1},\ldots,x_N)=(x_1,\ldots,x_n).$$
 Our construction of this change of variable is quite technical, and we refer the reader directly to
 Section \ref{sec:changeofv}.
 We further denote by $Z_1,\ldots,Z_m$ the vector fields on $\G$
 obtained from $J_1,\ldots,J_m$ via the change of variable and we set $\LL_\G:=\sum_{j=1}^m Z_j^2$.
 The reason for this change of variable is motivated by the fact that
 we need to control the fibers of $\pi$, since we shall perform an integration over them, and we shall need suitable estimates
 allowing for integration.

 Due to the different roles of the first $n$ variables of $\RN$ if compared to the remaining ones,
 we use, from now on, the following notation for the points of $\G\equiv \RN$:
 $$(x,\xi),\quad \text{with $x=(x_1,\ldots,x_n)\in\R^n$ and $\xi=(\xi_1,\ldots,\xi_p)\in\R^p$,} $$
 where $p=N-n=\textrm{dim}(\textrm{Lie}\{X_1,\ldots,X_m\})-n$.
 With this notation, one can easily check that the dilations $d_\lambda$ on $\RN$ take on the product form
 $d_\lambda(x,\xi)=(\dela(x),\delta^*_\lambda(\xi))$, where
 \begin{equation}\label{deltyastar}
    \delta^*_\lambda(\xi)=(\lambda^{\sigma^*_1}\xi_1,\ldots,\lambda^{\sigma^*_p}\xi_p),\quad
 \text{with
 $1=\sigma^*_1\leq\cdots\leq\sigma^*_p$.}
 \end{equation}
 The fact that $\pi$ is now the projection onto $\R^n$ says that \eqref{intro.lift}
  $Z_i$ is a genuine lifting of $X_i$ (in the sense introduced in the \emph{incipit} of this introduction); this means that,
 for any smooth function $f=f(x,\xi)$ which is independent of $\xi$, one has
 $$Z_i\big(f(x,\xi)\big)= (X_i f)(x),\qquad \forall\,\, x\in \R^n.$$
 Equivalently, there exist vector fields $R_1,\ldots,R_m$ on $\RN$ \emph{operating only in the $\xi$ variables}
 (and with coefficients possibly depending on $(x,\xi)$) such that
 $$Z_i=X_i+R_i,\quad i=1,\ldots,m.$$
 This also proves that $\LL_\G=\sum_{i=1}^m Z_i^2$ is a lifting of $\LL=\sum_{i=1}^m X_i^2$.
\begin{ex}\label{examp.Grushin2}
 In the first case considered in Example \ref{examp.Grushin}, one has
 $X_1=\de_{x_1}$ and $X_2=x_1\de_{x_2}$, and the lifting vector fields are
 $$Z_1=\frac{\de}{\de x_1}=X_1,\quad    Z_2= \frac{\de}{\de \xi_1}+x_1\frac{\de}{\de x_2}=X_2+\frac{\de}{\de \xi_1};$$
 the dilations on $\R^3$ are $d_\lambda(x_1,x_2,\xi)=(\lambda^1 x_1,\lambda^2 x_2,\lambda^1\xi_1)$.

 In the second example, one has
 $X_1=\de_{x_1}$ and $X_2=x^2_1\de_{x_2}$, and the lifting vector fields are
\begin{align*}
 Z_1&=\frac{\de}{\de x_1}-\frac{\xi_1}{2}\,\frac{\de}{\de \xi_2}=X_1
 -\frac{\xi_1}{2}\,\frac{\de}{\de \xi_2},\\
 Z_2&= \frac{\de}{\de \xi_1}+\frac{x_1}{2}\frac{\de}{\de \xi_2}+ x^2_1\frac{\de}{\de x_2}=X_2+\frac{\de}{\de \xi_1}+\frac{x_1}{2}\frac{\de}{\de \xi_2};
\end{align*}
 the dilations on $\R^4$ are $d_\lambda(x_1,x_2,\xi_1,\xi_2)=(\lambda^1 x_1,\lambda^3 x_2,\lambda^1\xi_1,\lambda^2\xi_2)$.
\end{ex}
 We are now ready to state our result proving the existence of a fundamental solution $\Gamma(x;y)$
 of $\LL=\sum_{i=1}^m X_i^2$ obtained by a saturation argument starting from the fundamental solution $\Gamma_\G(x,\xi;y,\eta)$ of the
 sub-Laplacian $\LL_\G $. The formula relating $\Gamma$ to $\Gamma_\G$ is the following one:
\begin{equation*}
  \Gamma(x;y) = \int_{\R^p} \Gamma_\G\big(x,0; y,\eta)\,\d\eta\qquad (x\neq y).
\end{equation*}
 Before stating the complete theorem, we show what it gives in the previous examples.
\begin{ex}\label{examp.Grushin23333}
 In the first case  considered in Example \ref{examp.Grushin},
 the fundamental solution of the operator $\mathcal{G}=(\de_{x_1})^2+(x_1\de_{x_2})^2$
 has the explicit form
\begin{equation*}
  \Gamma(x_1,x_2;y_1,y_2) = c\,\int_{\R}\frac{\d\eta}
  {\sqrt{ ((x_1-y_1)^2+\eta^2 )^2+ 4\,
    (2\,x_2 - 2\,y_2 + \eta\,(x_1+y_1) )^2}},
 \end{equation*}
 for some positive constant $c$; for a more explicit formula (comparable
 to the existing formulas for the Grushin operator), see
 \eqref{sec.th:GammaGrushinExplicit}.
 In the second example, for the fundamental solution of
 $(\de_{x_1})^2+(x^2_1\de_{x_2})^2$
 we obtain the representation
 \begin{equation*}
   \begin{split}
   \Gamma(x_1,x_2;y_1,y_2)&= \int_{\R^2}\Gamma_\G \Big(y_1-x_1,y_2-x_2 + x_1\eta_1(x_1-y_1)-2\,x_1\eta_2,
   \eta_1,\eta_2-\tfrac{1}{2}x_1\eta_1 \Big)\,\d\eta_1\d\eta_2,
   \end{split}
  \end{equation*}
 where $\Gamma_\G$ is the fundamental solution with pole at $0\in\R^4$ of the sub-Laplacian
\begin{align*}
 Z_1^2+Z_2^2=\Big(\frac{\de}{\de x_1}-\frac{\xi_1}{2}\,\frac{\de}{\de \xi_2}\Big)^2+
 \Big(\frac{\de}{\de \xi_1}+\frac{x_1}{2}\frac{\de}{\de \xi_2}+ x^2_1\frac{\de}{\de x_2}\Big)^2,\quad
 (x_1,x_2,\xi_1,\xi_2)\in\R^4.
\end{align*}
 Even if $\Gamma_\G$ is not explicitly known, due to the equivalence of all
 homogeneous norms on a Carnot group, we can infer that
 $\Gamma(x;y)$ is bounded from above and from below (up to
  two structural constants independent of $x,y\in\R^2$) by
  the following integral (here $(x_1,x_2)\neq (y_1,y_2)$)
\begin{equation*}
  \int_{\R^2}
  \bigg\{
    |y_1-x_1 |+ |
   y_2-x_2 + x_1\eta_1(x_1-y_1)
   -2\,x_1\eta_2 |^{1/3}+ |\eta_1 |+
    |\eta_2-\tfrac{1}{2}x_1\eta_1 |^{1/2}
    \bigg\}^{-5}
  \,\d \eta_1\d\eta_2.
\end{equation*}
 This shows how our representation, albeit not explicit,
  can be used to derive estimates of $\Gamma$.
  \end{ex}
 The following theorem is the main result of this paper.
\begin{thm}\label{thm.riassuntivoooo}
 Let $X_1,\ldots,X_m$ be $C^\infty$ vector fields in $\R^n$, homogeneous of degree $1$ w.r.t.
 $\dela$ in \eqref{intro.dela}, linearly independent (as linear differential operators)
 and satisfying H\"ormander's rank condition at $0\in \R^n$. Suppose that
 $q:=\sum_{j=1}^n \sigma_j>2$. Finally let us set $N=\mathrm{dim}(\mathrm{Lie}\{X_1,\ldots,X_m\})$ and $p=N-n$.\medskip

 Then there exists a homogeneous Carnot group  $\G=\big(\RN,\star,d_{\lambda}\big)$
 (with $m$ generators and nilpotent of step $\sigma_n$),
 and there exist vector fields $Z_1,\ldots,Z_m$,  Lie-generators of $\mathrm{Lie}(\G)$, such
 that (for every $i = 1,\ldots,m$) $Z_i$ is a lifting of $X_i$, via the projection $\pi:\R^N=\R^n\times \R^p\to\R^n$ of $\R^N$
 onto the first $n$ variables.\medskip

 Let $\Gamma_\G$ be the (unique) fundamental solution of $\Lop_\G=\sum_{j=1}^m Z_j^2$ with pole at $0\in\RN$
 and vanishing at infinity (whose existence is proved in \cite{Folland75}).
 Then
\begin{equation}\label{stimosaintrp}
  \Gamma(x;y) = \int_{\R^p} \Gamma_\G \big((x,0)^{-1}\star (y,\eta)\big)\,\d\eta\qquad (x\neq y)
\end{equation}
 is a fundamental solution of $\LL$, that is,\footnote{Note that $\LL$ is formally self-adjoint, due to $\dela$-homogeneity.}
     $y\mapsto \Gamma(x;y)$
    is locally integrable on $\R^n$ and
\begin{equation*}
   \int_{\R^n}\Gamma(x;y)\,\LL\varphi(y)\,\d y = -\varphi(x)
   \qquad \text{for every $\varphi\in C_0^{\infty}(\R^n)$ and every $x\in\R^n$.}
\end{equation*}
 Moreover $\Gamma$ is  strictly positive, and it has the (joint) homogeneity property
 \begin{equation*}
    \Gamma (\delta_\lambda(x);\delta_\lambda(y) ) = \lambda^{2-q}\,
    \Gamma(x;y), \quad \text{for every $x,y \in \R^n$ with $x \neq y$ and $\lambda>0$}.
  \end{equation*}
 Furthermore, it is symmetric:
 $$\Gamma(x;y)=\Gamma(y;x) \qquad \text{for every $x,y\in\R^n$ with $x\neq y$.}$$
 Finally, for every $x \in \R^n$, we have the following properties of the functions $\Gamma(x;\cdot)=\Gamma(\cdot;x)$:\medskip
    \begin{itemize}
    \item[(i)]
    $y\mapsto \Gamma(x;y)$
    is smooth and $\LL$-harmonic on $\R^n\setminus\{x\}$;

    \item[(ii)]
    $y\mapsto \Gamma(x;y)$
    vanishes at infinity (uniformly for $x$ in compact sets);

    \item[(iii)]
         $(x,y)\mapsto \Gamma(x;y)$
  is locally integrable on $\R^n\times \R^n$
    and smooth out of the diagonal.\medskip
\end{itemize}
 There exists only one function $\Gamma$ satisfying the above properties. \medskip
\end{thm}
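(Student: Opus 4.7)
\medskip

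\emph{Proof plan.} The existence of the group $\G$, the dilations $d_\lambda$, the lifting fields $Z_i=X_i+R_i$ (with $R_i$ acting only in $\xi$), and the fundamental solution $\Gamma_\G$ of $\LL_\G$ are already in place from the preceding sections and from Folland's results. The plan therefore breaks into four blocks: (a) convergence of the $\eta$-integral in \eqref{stimosaintrp}, (b) verification of the distributional identity $\int \Gamma\,\LL\varphi = -\varphi$, (c) the qualitative properties (homogeneity, symmetry, positivity, smoothness, decay), and (d) uniqueness.

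For (a), set $Q:=q+\sum_{j=1}^p\sigma^*_j$, the homogeneous dimension of $\G$, and fix a $d_\lambda$-homogeneous norm $\|\cdot\|_\G$. Folland's estimate gives $0<\Gamma_\G(z)\le C\,\|z\|_\G^{2-Q}$. Writing $(x,0)^{-1}\star(y,\eta)=(y-x,\,\eta+P(x,y,\eta))$ where $P$ is polynomial and $\delta^*_\lambda$-homogeneous of degree $1$ in $\eta$ (this is where the product structure of the dilations of $\G$, and the fact that $\pi$ is the genuine projection, are used), the natural scaling $\eta=\delta^*_{\|y-x\|}(\zeta)$ has Jacobian $\|y-x\|^{\sum \sigma^*_j}=\|y-x\|^{Q-q}$ and produces
\begin{equation*}
  \Gamma(x;y)=\|y-x\|^{2-q}\int_{\R^p}\Gamma_\G\bigl(\text{\,rescaled argument}\,\bigr)\,\d\zeta.
\end{equation*}
The rescaled integrand is smooth in $\zeta$ and bounded above by $C\,(1+\|\zeta\|)^{2-Q}$, which is integrable precisely because $Q-2>p$ (as $\sigma^*_j\ge 1$); the hypothesis $q>2$ ensures the prefactor is finite, giving both convergence and the homogeneity $\Gamma(\delta_\lambda x;\delta_\lambda y)=\lambda^{2-q}\Gamma(x;y)$, as well as local integrability in $y$ by the usual polar-coordinate argument on the homogeneous group $(\R^n,\delta_\lambda)$.

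For (b), which I expect to be the main obstacle, fix $\varphi\in C_0^\infty(\R^n)$ and $x\in\R^n$. Choose $\chi\in C_0^\infty(\R^p)$ with $\chi\equiv 1$ near $0$ and set $\chi_R(\eta):=\chi(\delta^*_{1/R}\eta)$. Then $\Phi_R(y,\eta):=\varphi(y)\chi_R(\eta)\in C_0^\infty(\R^N)$, and since $Z_i=X_i+R_i$ with $R_i$ acting only in $\eta$,
\begin{equation*}
  \LL_\G\Phi_R(y,\eta)=\chi_R(\eta)\,\LL\varphi(y)+E_R(y,\eta),
\end{equation*}
where $E_R$ collects the cross terms $(X_i\varphi)(R_i\chi_R)+(R_i^2\chi_R)\varphi$ and the like. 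Applying the defining property of $\Gamma_\G$ on $\R^N$ yields $-\varphi(x)=\int_{\R^N}\Gamma_\G((x,0)^{-1}\star(y,\eta))\,\LL_\G\Phi_R\,\d y\,\d\eta$. Fubini together with the estimate in (a) lets me pass to the limit $R\to\infty$ in the main term, obtaining $\int_{\R^n}\Gamma(x;y)\LL\varphi(y)\,\d y$. The real work is to show that the error $\int \Gamma_\G\cdot E_R\to 0$; here the $\delta^*_\lambda$-homogeneity of the $R_i$ (which makes $R_i\chi_R$ and $R_i^2\chi_R$ decay by factors $R^{-1}$ and $R^{-2}$ while remaining supported where $\|\eta\|$ is comparable to $R$) combined with the global size estimate $\Gamma_\G\le C\|\cdot\|_\G^{2-Q}$ and the support of $\varphi$ reduces the task to a scaled version of the integral in (a), which tends to $0$.

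For (c) and (d), positivity is inherited pointwise from $\Gamma_\G>0$; smoothness of $\Gamma(x;\cdot)$ off the diagonal follows from smoothness of $\Gamma_\G$ away from $0$ and dominated differentiation under the integral; $\LL$-harmonicity of $y\mapsto\Gamma(x;y)$ off $x$ is then immediate since $\LL\Gamma(x;\cdot)=\int \LL_\G[\Gamma_\G((x,0)^{-1}\star\cdot)]\,\d\eta=0$; vanishing at infinity (uniformly on compacta for $x$) follows from the homogeneity computed in (a). Symmetry can be deduced from the formal self-adjointness of $\LL$ once uniqueness is known, by noting that $(x,y)\mapsto\Gamma(y;x)$ also satisfies all the stated properties. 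Finally, uniqueness is obtained in the standard way: if $\Gamma_1,\Gamma_2$ both satisfy (i)--(iii) and the defining identity, then for each fixed $x$ the difference $h_x:=\Gamma_1(x;\cdot)-\Gamma_2(x;\cdot)$ is a distribution on $\R^n$ annihilated by $\LL$; by H\"ormander's hypoellipticity $h_x$ is smooth, extends $\LL$-harmonically across $x$, and vanishes at infinity, so the weak maximum principle for the sub-Laplacian-type operator $\LL$ (available under the H\"ormander condition) forces $h_x\equiv 0$.
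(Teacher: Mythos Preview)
Your overall architecture matches the paper's: the same cutoff argument for (b), the same maximum-principle reasoning for uniqueness, and symmetry via uniqueness is a legitimate shortcut. But block (a) contains a genuine error that propagates.

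You write $(x,0)^{-1}\star(y,\eta)=(y-x,\,\eta+P(x,y,\eta))$. This is false: in a Carnot group of step $\ge 2$ the first $n$ components of $(x,0)^{-1}\star(y,\eta)$ do depend on $\eta$. Already in the Grushin lifting one has $(x,0)^{-1}\star(y,\eta)=(y_1-x_1,\,y_2-x_2-x_1\eta,\,\eta)$, and in higher-step examples the mixing is worse (see the paper's \eqref{sec.two_2:eq_expressionconvmap}, where the components $F_i$ for $i\le n$ carry polynomials $p_i(x,y,\eta)$). Consequently the substitution $\eta=\delta^*_{\|y-x\|}(\zeta)$ does \emph{not} factor a clean power of $\|y-x\|$ out of $\Gamma_\G$, and your claimed bound $C(1+\|\zeta\|)^{2-Q}$ on the rescaled integrand has no justification. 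The paper circumvents this by a different change of variable: it observes that the \emph{last} $p$ components $F_{n+k}(x,y,\eta)=\eta_k+q_k(x,y,\eta)$ are triangular in $\eta$, so $\Psi_{x,y}(\eta):=(F_{n+1},\ldots,F_N)$ is a diffeomorphism of $\R^p$ with Jacobian $\equiv 1$; after substituting $u=\Psi_{x,y}(\eta)$ one bounds $\Gamma_\G$ from above by $N(u)^{2-Q}$ with $N(u)=\sum_k|u_k|^{1/\sigma^*_k}$, and a dyadic computation shows $\int_{\{N\ge 1\}}N^{2-Q}<\infty$ exactly when $q>2$. The homogeneity $\Gamma(\delta_\lambda x;\delta_\lambda y)=\lambda^{2-q}\Gamma(x;y)$ is then proved separately and directly, by substituting $\eta=\delta^*_\lambda(u)$ and using the $d_\lambda$-homogeneity of $\Gamma_\G$ (no appeal to a ``prefactor'').

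Two smaller points. In (b), the claim that ``$R_i\chi_R$ decays like $R^{-1}$'' is not automatic from homogeneity of $R_i$ alone, because the coefficients of $R_i$ depend on $(x,\xi)$ jointly, not just on $\xi$; one needs the monomial decomposition $r_{\alpha,\beta}(x,\xi)=\sum_{|\gamma|_*\le|\beta|_*-1}c_{\alpha,\beta,\gamma}(x)\,\xi^\gamma$ (this is the content of the paper's Theorem~\ref{sec.two_1:thm_regliftLopX}) to get the uniform bound. In (c), ``dominated differentiation under the integral'' for smoothness of $\Gamma(x;\cdot)$ would require dominating every $y$-derivative of $\Gamma_\G((x,0)^{-1}\star(y,\eta))$ by an integrable function of $\eta$, which is delicate; the paper instead gets smoothness for free from H\"ormander hypoellipticity of $\LL$ once continuity of $\Gamma(x;\cdot)$ off $x$ is established.
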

 In order to demonstrate that $y\mapsto \Gamma(x;y)$ solves $-\LL (\Gamma(x;\cdot))=\text{Dir}_x$
 (the latter being the Dirac mass concentrated at $\{x\}$) we shall make use of a general
 result, of independent interest, providing sufficient conditions for a process of saturation to be applicable
 (see Theorem \ref{sec.one:thm_Main}).
 Moreover, due to the equivalence of all $d_\lambda$-homogeneous norms, from
 \eqref{stimosaintrp} one can obtain uniform estimates of $\Gamma$, from above and from below, on $\R^n\times\R^n$.
 More precisely, with the notation  \eqref{intro.dela} for $\dela$
 and \eqref{deltyastar} for $\delta^*_\lambda$,  let us set
\begin{equation*}
   h(x,\xi) =
   \sum_{j = 1}^n|x_j|^{ {1}/{\sigma_j}} + \sum_{k = 1}^p |\xi_k|^{{1}/{\sigma^*_k}},\qquad x\in\R^n,\,\,\xi\in\R^p.
\end{equation*}
 Then there exists a constant $\mathbf{c}$
 (depending only on $\G$)
 such that
   $$\mathbf{c}^{-1}\,\int_{\R^p} h^{2-Q}((x,0)^{-1}\star(y,\eta) )\,\d \eta
   \leq \Gamma(x;y) \leq \mathbf{c}\,
   \int_{\R^p} h^{2-Q}((x,0)^{-1}\star(y,\eta) )\,\d \eta,$$
   holding true for  $x,y \in \R^n$ with $x\neq y$; here we have used the notation $Q=\sum_{j=1}^n \sigma_j+
   \sum_{k=1}^p \sigma^*_k$ for the $d_\lambda$-homogeneous dimension of $\G$.

   We postpone to a future investigation the study of the
   Heat kernel associated with $\LL-\de_t$ (see also \cite{BLUade}) and
   fine estimates of $\Gamma$ in terms of the Carnot-Carath\'eodory
   distance associated with $X_1,\ldots,X_m$.\bigskip

 We end the introduction with some bibliographical references.
 In the literature, when the existence of a \emph{global} fundamental solution $\Gamma$
 for a PDO is provided, it seems that in the vast majority of cases (though exceptions are available):\medskip
 \begin{itemize}
   \item[-] PDOs with \emph{polynomial coefficients} (or having a polynomial growth) are considered;
   \item[-] existence is a \emph{by-product of an explicit formula} (possibly under an integral form)
   for $\Gamma$.\medskip
 \end{itemize}
  Incidentally, the same happens in the present paper, since homogeneous operators
  have necessarily polynomial coefficients, and an integral representation for $\Gamma$ (albeit not explicit)
  is furnished. Global fundamental solutions, without an explicit representation, are given
  e.g., in Folland \cite{Folland75}; Nagel, Ricci, Stein \cite{NagelRicciStein};
  the second-named author and Lanconelli \cite{BonfLancCPAA}. Existence results
  without an exact representation are also available, based on the so-called Levi's parametrix method \cite{Levi}:
  the interested reader is referred to e.g., \cite{BramBranLU,Friedman,HormBOOK};
  concerning Levi's parametrix method,
  we also highlight the recent paper by Bramanti, Brandolini, Manfredini, Pedroni \cite{BramBranMP}, where a local Lifting technique and a local
  saturation argument are also applied. See also the paper \cite{CittiManfredini} by  Citti, Manfredini, where it is exploited
  a local Lifting technique
  involving hypoelliptic H\"ormander operators and their local fundamental solutions.

  Since our method is based on a saturation argument, we would like to highlight that
  a similar saturation method was also used by Beals, Gaveau, Greiner, Kannai in \cite{BealsGaveauGreinerKannai2}, where
  operators lifting to sub-Laplacians on Carnot groups of step two are considered;
  or in Bauer, Furutani, Iwasaki \cite{BauerFurutaniIwasaki} (where the sub-Laplacian of the first Heisenberg group
  is used as a lifting a Grushin operator on $\R^2$).
  Our theorem, allowing for general homogeneous operators, comprises both of these cases.
  We also observe that some general results on fiber integration
  for obtaining Heat kernels appear in
  \cite[Sect. 10.3]{CalinChangFurutaniIwasaki}, where it is required that the
  fibers be compact, but this does not hold in our case.
   To the best of our knowledge, our paper
  is the first systematic analysis of this lifting/saturation technique for all homogeneous
  sums of squares.  \medskip

  The literature on the (difficult) problem of obtaining explicit/integrally-represented fundamental solutions is wide. Starting from early
  works (dating back to 1930-40's) on kinetic operators by Kolmogorov \cite{Kolmogorov} and Chandrasekhar \cite{Chandrasekhar}, the focus
  (in the 1970's) shifted to operators on the Heisenberg groups:
  Folland \cite{Folland}; Folland, Stein \cite{FollandStein};
  Tsutsumi \cite{Tsutsumi} (for some parabolic pseudo-differential operators);
  Hulanicki \cite{Hulanicki}; Gaveau \cite{Gaveau}; Kaplan \cite{Kaplan} (for the case of H-type groups). In \cite{Gaveau, Cygan}
  Heat kernels on nilpotent Lie groups of step two are also considered (on the same topic, see also the more recent paper by Furutani \cite{Furutani}).

  Starting from the paper \cite{Greiner} by Greiner in 1979, a large part of the literature
  has subsequently focused on PDOs of \emph{Grushin-type}, the golden age for this issue
  being the late 1990's with fundamental papers by
  Beals, Gaveau, Greiner \cite{BealsGaveauGreiner1,BealsGaveauGreiner2,BealsGaveauGreiner2ter,BealsGaveauGreiner3,BealsGreinerGaveau},
  and by the same authors and Kannai \cite{BealsGaveauGreinerKannai}; see also Beals \cite{Beals,Beals2}.
  In \cite{BealsGaveauGreiner2} sub-Laplacians on general step two Carnot groups are considered.

  The interest in PDOs on Heisenberg groups (or on groups modeled on the Heisenberg groups, such as quaternionic or Cayley Heisenberg) has not ceased during the decades, see:
  Benson, Dooley, Ratcliff \cite{BensonDooleyRatcliff}; Klingler \cite{Klinger}; Beals, Gaveau, Greiner \cite{BealsGaveauGreiner4};
  Zhu \cite{Zhu}; Luan, Zhu \cite{LuanZhu,LuanZhu2}; Zhu, Yang \cite{ZhuYang}; Boggess, Raich \cite{BoggessRaich}.

  Explicit fundamental solutions are available also for \emph{non-linear} PDOs, such as
  $p$-Laplacians (for Heisenberg/Grushin-type PDOs):
  Capogna, Danielli, Garofalo \cite{CapognaDanielliGarofalo};
  Heinonen, Holopainen \cite{HeinonenHolopainen};
  Bieske, Gong \cite{BieskeGong}; Bieske \cite{Bieske}.

  Recent papers by Agrachev, Boscain, Gauthier, Rossi \cite{AgrachevBoscainGauthier},
  and by Boscain, Gauthier, Rossi \cite{BoscainGauthierRossi} deal with heat kernels, respectively,
  on unimodular Lie groups of type I (in the sense of \cite{Dixmier}, comprising
  the real connected nilpotent Lie groups) and on $3$-step nilpotent Engel/Cartan groups.

  Different examples for which an explicit fundamental solution is available
  can be found in Aar$\tilde{\text{a}}$o \cite{Aarao,Aarao2} (kinetic operators
  of physical interest) and in Calin, Chang \cite{CalinChang}
  (PDOs with radical coefficients).\medskip

 A comprehensive list of references can be found in the 2011 monograph
 by Calin, Chang, Furutani, Iwasaki \cite{CalinChangFurutaniIwasaki},
 where many theories and techniques for obtaining explicit Heat kernels
 for elliptic and sub-elliptic PDOs are presented.\bigskip

  The plan of the paper is the following.
  In Section \ref{sec.one} we
  introduce the relevant definitions and notation used throughout, and we prove
  a general saturation theorem under minimal assumptions (Theorem \ref{sec.one:thm_Main}).
  Afterwards, we focus on H\"ormander sums of squares of homogeneous operators:
  in Section \ref{sec.two}, we first recall Folland's version of the Lifting technique in this framework;
  then in Section \ref{sec:changeofv} we add a change-of-variable  argument to Folland's Lifting.
  In Section \ref{sec:changeofv} we also prove a crucial technical result (Theorem \ref{sec.two_1:thm_regliftLopX})
  showing that the general saturation process of  Section \ref{sec.one} is allowed
  in the homogeneous case. Section \ref{sec:GreenHOM} contains the proof of our main
  Theorem \ref{thm.riassuntivoooo}, as a consequence of the results of the previous sections.
  Finally, in Section \ref{sec:examples} we furnish some explicit examples
  of operators to which our theory  applies.\bigskip

 \emph{Acknowledgments.} The results of this manuscript were presented by the second-named at the
  Conference ``Noncommutative Analysis and Partial Differential Equations'', 11--15 April, 2016,
  Imperial College, London; the second-named author wishes to express
  his gratitude to the Organizing Committee of the Conference for the hospitality.
  \section{A general saturation argument for obtaining fundamental solutions} \label{sec.one}
  To begin with, since there is no common agreement on the notion of what
  fundamental solutions are, we fix the relevant definitions. In what follows we use the notation
   $$D_x^\alpha=\Big(\frac{\de}{\de x}\Big)^\alpha=\frac{\de^{|\alpha|}}{\de x_1^{\alpha_1}\cdots \de x_n^{\alpha_n}},$$
   for higher order derivatives on $\R^n$. Here $\alpha=(\alpha_1,\ldots,\alpha_n)\in (\mathbb{N}\cup\{0\})^n$ and $|\alpha|=\alpha_1+\cdots+\alpha_n$.
\begin{defi}[Fundamental solution]\label{sec.one:defi_Green}
 On Euclidean space $\R^n$, we consider a linear partial differential operator of order $d\in \mathbb{N}$,
\begin{equation}\label{ooperP}
  P = \sum_{|\alpha|\leq d}a_{\alpha}(x)\,D_x^{\alpha},
\end{equation}
   with smooth real valued coefficient functions $a_{\alpha}(x)$ on $\R^n$.
  We say that a function
  $$\Gamma:\{(x;y)\in \R^n\times\R^n: x \neq y\} \longto \R,$$
  is a \emph{(global) fundamental solution for $P$}
  if it satisfies the following
  property:
\begin{enumerate}
  \item[(i)] For every fixed $x \in \R^n$, the function
  $\Gamma(x;\cdot)$ is locally integrable on $\R^n$ and
\begin{equation} \label{sec.two:eq_GreenGamma}
   \int_{\R^n}\Gamma(x;y)\,P^*\varphi(y)\,\d y = -\varphi(x)
   \qquad \text{for every $\varphi\in C_0^{\infty}(\R^n)$},
\end{equation}
  where $P^*$ denotes the usual formal adjoint of $P$.
  \end{enumerate}
\end{defi}
 \noindent In the literature, many authors ask for other properties in order to define a
 fundamental solution for $P$; some of these further requirements are listed below; we explicitly mention them
 since we will be able to prove that our operators $\LL=\sum_jX_j^2$ fulfil many of them as well:\medskip
\begin{enumerate}
  \item[(ii)] $\Gamma(x;y)\geq 0$ whenever $x\neq y$ (or $\Gamma(x;y)> 0$);

  \item[(iii)]
  $\Gamma\in L^1_{\mathrm{loc}}(\R^n\times\R^n)$ and,
  for every fixed $y \in \R^n$, $\Gamma(\cdot;y)$
  is locally integrable on $\R^n$;

  \item[(iv)]
  for every fixed $x \in \R^n$, the function $y\mapsto \Gamma(x;y)$
  vanishes as $y\to \infty$.

  \item[(v)]
  for every fixed $x \in \R^n$, the function $y\mapsto \Gamma(x;y)$
  goes to $\infty$ as $y\to x$.\medskip
\end{enumerate}

 If $\Gamma$ is a fundamental solution for $P$
 and if $x \in \R^n$ is fixed, then
  \eqref{sec.two:eq_GreenGamma} can be rewritten as
\begin{equation} \label{sec.two:eq_GreenGammadist}
   P\Gamma_x = -\mathrm{Dir}_x \quad \text{in $\mathcal{D}'(\R^n)$},
\end{equation}
  where $\mathrm{Dir}_x$ is the Dirac distribution supported at $\{x\}$.
  We give some remarks in order to describe the meaning of the
  above extra requirements (ii)-to-(v).
\begin{rem}
  (a)\,\, The existence of a global fundamental solution
  for $P$ is far from being obvious and it is, in general,
  a very delicate issue.
  In the particular case of $C^{\infty}$-hypoelliptic linear PDOs $P$
  having a $C^{\infty}$-hypoelliptic formal adjoint $P^*$,
  it is possible to prove the \emph{local}
  existence of a fundamental solution on a suitable neighborhood of
  each point of $\R^n$ (see, e.g., \cite{Treves});
  moreover, in \cite{Bony}
  Bony showed that any H\"ormander operator
  admits a smooth fundamental solution on every bounded open set
  satisfying suitable regularity assumptions on the boundary.\medskip

  (b)\,\, Fundamental solutions are, in general, not unique
  since the addition of a $P$-harmonic function (that is, a smooth function
  $h$ such that $P h=0$ in $\R^n$) to a fundamental solution produces another
  fundamental solution. \medskip

  (c)\,\, Nonetheless, if  $P$ is a second order $C^\infty$-hypoelliptic operator with $P(1)\leq 0$,
  and fulfilling
  the Weak Maximum Principle on every bounded open set of $\R^n$, then
  there exists at most one $\Gamma$ satisfying properties (i) and (iv)
  above. Indeed, if $\Gamma_1,\Gamma_2$ are two such
  functions, then
  (for every fixed $x \in \R^n$)
  $u_x := \Gamma_1(x,\cdot)-\Gamma_2(x,\cdot)$
  belongs to $\Lloc{\R^n}$
  and it is a solution of $P u_x = 0$ in the weak sense of distributions on $\R^n$;
  the hypoellipticity of $P$ ensures that $u_x$ is (a.e. equal to) a smooth function on $\R^n$
  which vanishes at infinity by properties (iv) of $\Gamma_1,\Gamma_2$; from the Weak Maximum Principle for $P$
  (and $P(1)\leq 0$)
  it is standardly obtained that $u_x\equiv 0$ (a.e.), i.e., $\Gamma_1\equiv \Gamma_2$ (a.e.).
  When continuity of $\Gamma(x;\cdot)$ is also requested, this gives $\Gamma_1\equiv \Gamma_2$.\medskip

  (d) If $\Gamma$ is a fundamental solution for $P$ satisfying condition (iii)
  above, then it is almost tautological to verify that,
  for any $\varphi\in C_0^\infty(\R^n)$,
  the function
  $$y\mapsto \Lambda(\varphi)(y):=\int_{\R^n}\Gamma(x;y)\,\varphi(x)\,\d x$$
  is locally integrable and it satisfies $P(\Lambda(\varphi))=-\varphi$
  in the weak sense of distributions. Therefore, if $P$ is $C^\infty$-hypoelliptic and whenever
  $\Gamma$ satisfies some integrability conditions ensuring that $\Lambda(\varphi)(y)$ is continuous w.r.t.\,$y$, then
  $-\Lambda(\varphi)$ is a smooth classical solution to $Pu=\varphi$. This is one of the alternative  definitions
  of fundamental solution.\medskip

  (e) Suppose that conditions (iv) and (v) and the \emph{strict positivity} of $\Gamma$ hold true.
 Then the so-called $\Gamma$-balls
  $$\Omega_r(x):=\{y\in \R^n:\,\Gamma(x;y)>1/r\}\cup\{x\} $$
  form a basis of neighborhoods of $x$ (thanks to condition (v)) invading $\R^n$, i.e.,
  $\bigcup_{r>0} \Omega_r(x)=\R^n$ (thanks to condition (iv) and the positivity of $\Gamma$).
  Applications of these further geometrical assumptions to the Potential Theory for a class
  of second-order operators $\LL$ in divergence-form have been given in the series of papers \cite{AbbondanzaBonf,BattagliaBonf,BonfLancJEMS,BonfLancTomm}.
  This shows that the qualitative well-behaved properties of a fundamental solution $\Gamma$ can produce
  meaningful properties of the sheaf of the harmonic and sub-harmonic functions for $\LL$.
\end{rem}
   Let $P$ be a smooth linear PDO on $\R^n$ as in \eqref{ooperP}.
   We say that a linear PDO $\widetilde{P}$, defined on a higher-dimensional
   space $\R^n\times\R^p$, is a
   \emph{lifting of $P$}
   if the following conditions are fulfilled:
\begin{enumerate}
    \item[(a)] $\widetilde{P}$ has smooth coefficients, possibly
    depending on $x\in\R^n$ and $\xi\in\R^p$;
    \item[(b)] for every fixed $f \in C^\infty(\R^n)$, one has
\begin{equation} \label{sec.one:def_eqLlift}
    \widetilde{P}(f\circ \pi)(x,\xi) = (P f)(x),
    \quad \text{for every
    $(x,\xi) \in \R^n\times\R^p$},
\end{equation}
   where $\pi(x,\xi)=x$ is the canonical projection of $\R^n\times\R^p$ onto $\R^n$.
   \end{enumerate}
  It is immediate to recognize that \eqref{sec.one:def_eqLlift} holds true
  if and only if
\begin{equation} \label{sec.one:rem_eqformP}
 \widetilde{P}=P+R\quad \text{with}\qquad  R = \sum_{\beta \neq 0}
   r_{\alpha,\beta}(x,\xi)\,D^\alpha_x D^\beta_{\xi},
\end{equation}
 for (finitely many) coefficient functions $r_{\alpha,\beta}\in C^\infty(\R^N)$,
 possibly identically vanishing on $\R^N$.     In other words,
 every summand of $R$ operates,  at least once necessarily, in the $\xi_1,\ldots,\xi_p$ variables.
 Our use of the term `lifting' is more specific than what is usually done in
 Differential Geometry;\footnote{It is sometimes customary to say that a smooth map $\pi:M\to N$ is
 a lifting of $P$ to $\widetilde{P}$ if
 \eqref{sec.one:def_eqLlift}  is replaced by
 $$\widetilde{P}(f\circ \pi)(m) = (P f)(\pi(m)),\quad \forall\,\,m\in M,\,\,\forall\,\, f\in C^\infty(N).$$}
 throughout, it is understood that we refer to a lifting in the above sense. \medskip

 If $\widetilde{P}$ is a lifting of $P$ and if $\widetilde{P}$ admits a fundamental solution
 $\widetilde{\Gamma}$,
 it is not at all obvious if the same holds true for $P$, nor if a fundamental solution for $P$
 may be obtained via a saturation argument.
 Technically, this is the case if the following heuristic argument can be made rigorous.
 By the definition of fundamental solution for $\widetilde{P}$ we have
 $$-\widetilde{\varphi}(x,\xi)=\int_{\R^n\times \R^p}
 \widetilde{\Gamma}(x,\xi;y,\eta)\,\widetilde{P}^*\widetilde{\varphi}(y,\eta)\,
 \d y \d\eta,$$
 for any $\widetilde{\varphi}\in C_0^\infty(\R^n\times \R^p)$.
 If we take $\widetilde{\varphi}$ of the form $\varphi(x) \theta_j(\xi)$
 (with $\varphi\in C_0^\infty(\R^n)$ and $\theta_j$ in $ C_0^\infty(\R^p)$)
 and we recall that $\widetilde{P}=P+R$, the above equality gives (by choosing $\xi=0$)
\begin{gather}\label{heurist}
\begin{split}
 -\varphi(x)\,\theta_j(0)&=\int_{\R^n}\Big(\int_{\R^p}
 \widetilde{\Gamma}(x,0;y,\eta)\,\theta_j(\eta)\,\d \eta\Big) P^*\varphi(y)\,\d y\\
 &\qquad\qquad +
 \int_{\R^n\times \R^p}
 \widetilde{\Gamma}(x,0;y,\eta)\,R^*\big(\varphi(y)\theta_j(\eta)\big)\,\d y\d \eta=\mathrm{I}_j+\mathrm{II}_j.
\end{split}
\end{gather}
 We want to pass to the limit as $j\to\infty$ in such a way that the above identity produces
 $$-\varphi(x)=\int_{\R^n}\Big(\int_{\R^p}
 \widetilde{\Gamma}(x,0;y,\eta)\,\d \eta\Big) P^*\varphi(y)\,\d y,$$
 so that a fundamental solution for $P$ is available by
 saturating the $\eta$ variable in $\widetilde{\Gamma}(x,0;y,\eta)$.
 Our idea is to choose a sequence $\theta_j\in C_0^\infty(\R^p)$
 such that the set $\{\eta:\theta_j(\eta)=1\}$ invades $\R^p$ as $j\to \infty$,
 and such that $\mathrm{II}_j$ in \eqref{heurist} goes to $0$ as $j\to \infty$.
 This may be reasonably possible (together with some integrability assumptions
 on $\widetilde{\Gamma}$) provided some conditions are fulfilled by the remainder operator $R$:\medskip
 \begin{itemize}
   \item[-]  if one chooses $\theta_j(\eta)=\theta(\eta/j)$
 (for some $\theta\in C_0^\infty$ identically $1$ in a neighborhood of $0$),

   \item[-]
   if $R^*$ operates in the lifting variables, so that $R^*(\theta(\eta/j))$
 always gives out at least $1/j$,

   \item[-]
    and if a dominated
 convergence can apply.\medskip
 \end{itemize}
  When we shall deal with $\dela$-homogeneous
 operators, this will be made possible if the cut-off functions $\theta_j$
 are further adapted to the homogeneous structure.\medskip

 The above argument justifies the following definition
 of ``saturable Lifting''; immediately after the technicalities, we
 show (see Remark \ref{sec.one:rem_propertyIII}) that a saturable Lifting
 is always available in meaningful cases (as for $\dela$-homogeneous operators: one of our main results here).
\begin{defi}[Saturable Lifting]\label{defi.satLift}
 Let $P$ be a smooth linear PDO on $\R^n$,
 and $\widetilde{P}=P+ R$ be a lifting of $P$ on
 $\R^n\times\R^p$ as in \eqref{sec.one:rem_eqformP}.

 We say that $\widetilde{P}$ is a \emph{saturable lifting} for $P$
 if the following conditions hold:
\begin{itemize}
  \item[(S.1)]
    Every summand of the formal adjoint $R^*$ of $R$
    operates at least once in the $\xi$ variables,
    i.e., $R^*$ has the form
\begin{equation}\label{sec.one:rem_eqformPstar}
 R^*=\sum_{\beta \neq 0}
   r^*_{\alpha,\beta}(x,\xi)\,D^\alpha_x D^\beta_{\xi},
\end{equation}
 for (finitely many, possibly vanishing) smooth functions $r^*_{\alpha,\beta}(x,\xi)$.

 \item[(S.2)]
 There exists a
 sequence $(\theta_j)_j$ in $C_0^{\infty}(\R^p,[0,1])$
 such that\footnote{By this we mean that,
 denoting by $\Omega_j$
 the set $\{\xi\in \R^p:\theta_j(\xi)=1\}$, one has
 $$\bigcup_{j\in \mathbb{N}} \Omega_j=\R^p\quad \text{and}\quad
 \Omega_j\subset \Omega_{j+1}\quad
 \text{for any $j\in\mathbb{N}$}.$$}
 $\{\theta_j=1\}\uparrow \R^p$ as $j\uparrow \infty$; moreover,
 for every compact set $K\subset \R^n$
 and for any  coefficient function $r^*_{\alpha,\beta}$ of $R^*$ as in
 \eqref{sec.one:rem_eqformPstar} one can find constants $C_{\alpha,\beta}(K)$
 such that
\begin{equation} \label{sec.one:eq_mainassumptiontheta}
   \Big|r^*_{\alpha,\beta}(x,\xi)\Big(\frac{\de}{\de\xi}\Big)^\beta\theta_j(\xi)\Big|
   \leq C_{\alpha,\beta}(K),\qquad \text{for every $x\in K$, $\xi\in \R^p$ and $j \in \mathbb{N}$.}
\end{equation}
\end{itemize}
\end{defi}
  We next give some sufficient conditions for a lifting to be saturable.
 In what follows we always assume that $P$ is a smooth linear PDO on $\R^n$, and that
 $\widetilde{P} = P+ R$ is a lifting of $P$ on
 $\R^n\times\R^p$, with $R$ as in \eqref{sec.one:rem_eqformP}.
 The notation $(x,\xi)$ for the points of $\R^n\times\R^p$ is always understood.
\begin{rem}\label{sec.one:rem_propertyIII}
 (a)\,\, \emph{If the coefficients of $R$ are independent of
  $\xi$, then $\widetilde{P}$ is a saturable lifting for $P$.}
    In fact, under this assumption, the operator $R$ takes the form
    $$R = \sum_{\beta \neq 0}
   r_{\alpha,\beta}(x)\,D^\alpha_x D^\beta_{\xi},$$
    and thus its formal adjoint $R^*$ acts on smooth functions $\psi$ as follows:
    \begin{equation} \label{sec.one:eq_Pstarrem}
     \begin{split}
     R^*\psi & = \sum_{\beta \neq 0}
   (-1)^{|\alpha|+|\beta|}\,
   D^\alpha_x\Big(r_{\alpha,\beta}(x)\,D^\beta_{\xi} \psi(x,\xi)\Big)
   =:\sum_{\beta \neq 0}
   r^*_{\alpha,\beta}(x)\,D^\alpha_x D^\beta_{\xi}\psi(x,\xi).
    \end{split}
    \end{equation}
    Thus  condition (S.1) in
    Definition \ref{defi.satLift} is fulfilled.
   In order to verify (S.2) as well, we choose a function
   $\theta \in C_0^{\infty}(\R^p,[0,1])$ such that
   $\theta \equiv 1$ on the Euclidean ball
   centered at $0$ and radius $1$, and we set
    $\theta_j(\xi) := \theta\big(\xi/j\big)$, for any $\xi \in \R^p$ and any
    $j \in \mathbb{N}$. Clearly, $\{\theta_j=1\}\uparrow \R^p$ as $j\uparrow \infty$.
   Finally, for every fixed compact set $K\subseteq \R^n$
   we have
   $$\big|r^*_{\alpha,\beta}(x)\, D^\beta_\xi
   \theta_j(\xi)\big|
   \leq (1/j)^{|\beta|}\max_{K}\big|r^*_{\alpha,\beta}\big|
   \,\max_{\R^p}|D^\beta\theta|,$$
   and \eqref{sec.one:eq_mainassumptiontheta} follows.\medskip

   (b)\,\, If, for every compact set $K\subseteq \R^n$, the coefficient
   functions of the operator $R^*$ are bounded on $K\times \R^p$, then
  (S.2) of Definition \ref{defi.satLift}
   is satisfied. It suffices to take
   $\theta_j(\xi) = \theta(\xi/j)$ as in (a) above.\medskip

 (c)\,\, If $\LL$ is a smooth second-order operator
  on $\R^n$ and if we consider the associated
 Heat-type operator $\mathcal{H}=\LL-\de_t$ in $\R^n\times \R$, then we have
 the above formalism with $R=-\de_t$. Since $R$ has constant coefficients, we are
 in case (a) above and $\mathcal{H}$ is therefore a saturable Lifting
  of $\LL$.\medskip

   (d)\,\, As we shall prove in Section \ref{sec:changeofv},
   \emph{if $\Lop$ is a sum of squares of H\"ormander vector fields
    which are $\dela$-homogeneous of degree $1$ w.r.t.\,a family of dilations
    $\dela$ as in \eqref{intro.dela},
    then there exists a saturable lifting $\widetilde{\Lop}$ of $\Lop$},
    which is actually a sub-Laplacian on a suitable Carnot group $\G$
    on $\RN$. This fact is non-trivial and it will be proved in
    Theorem \ref{sec.two_1:thm_regliftLopX}.
\end{rem}
 We now prove the following useful theorem.
  \begin{thm}\label{sec.one:thm_Main}
  Let $P$ be a smooth linear PDO on $\R^n$
  and let $\widetilde{P}$ be a  saturable Lifting of
  $\Lop$ on $\R^n\times\R^p$, according to Definition
  \ref{defi.satLift}.
  Let us assume that
  there exists a fundamental solution $\widetilde{\Gamma}$ for $\widetilde{P}$
  on the whole of $\R^n\times\R^p$ (see Definition \ref{sec.one:defi_Green}),
  further satisfying the following properties:
  \begin{itemize}
   \item[(i)] for every fixed $x, y \in \R^n$ with $x \neq y$, one has
   \begin{equation} \label{sec.one:eq_sumtildeGamma_1}
   \eta \mapsto \widetilde{\Gamma}\big(x,0;y,\eta\big) \quad
   \text{belongs to}\quad L^1(\R^p);
   \end{equation}

   \item[(ii)] for every fixed $x \in \R^n$ and every compact set
   $K\subseteq\R^n$, one has
   \begin{equation} \label{sec.one:eq_sumtildeGamma_2}
   (y,\eta)\mapsto
   \widetilde{\Gamma}\big(x,0;y,\eta\big)\quad
   \text{belongs to}\quad L^1(K\times\R^p);
   \end{equation}
  \end{itemize}
   Then the function $\Gamma: \{(x;y)\in\R^n\times\R^n: x \neq y\}\longto \R$
   defined by
   \begin{equation} \label{sec.one:mainThm_defGamma}
    \Gamma(x;y) := \int_{\R^p}\widetilde{\Gamma}\big(x,0;y,\eta\big)\,\d\eta,
   \end{equation}
   is a global fundamental solution for $P$ on  $\R^n$.
  \end{thm}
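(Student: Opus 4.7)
The plan is to test the defining identity of $\widetilde{\Gamma}$ against the separated test functions $\widetilde{\varphi}_j(y,\eta):=\varphi(y)\,\theta_j(\eta)$, where $\varphi\in C_0^{\infty}(\R^n)$ is arbitrary and $(\theta_j)_j$ is the cut-off sequence furnished by condition~(S.2), and then to pass to the limit $j\to\infty$ via dominated convergence, exploiting the structural condition~(S.1) to kill the remainder contribution.

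First I would verify that the formula~\eqref{sec.one:mainThm_defGamma} is meaningful: hypothesis~(i) shows that $\Gamma(x;y)$ is a well-defined real number for every $x\ne y$, while for any compact $K\subseteq\R^n$, Fubini's theorem combined with~(ii) gives
\[
\int_{K}|\Gamma(x;y)|\,\d y\;\le\;\int_{K\times\R^p}|\widetilde{\Gamma}(x,0;y,\eta)|\,\d y\,\d\eta\;<\;\infty,
\]
so that $\Gamma(x;\cdot)\in L^{1}_{\mathrm{loc}}(\R^n)$ for every fixed $x$.

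Next, fix $x\in\R^n$ and $\varphi\in C_0^{\infty}(\R^n)$. Since $\bigcup_j\{\theta_j=1\}=\R^p$ and the sets are nested, $\theta_j(0)=1$ for $j$ large, whence $\widetilde{\varphi}_j(x,0)=\varphi(x)$. Writing $\widetilde{P}^{*}=P^{*}+R^{*}$ and using that $P^{*}$ operates only in the $y$-variables, the identity~\eqref{sec.two:eq_GreenGamma} applied to $\widetilde{\Gamma}$ and $\widetilde{\varphi}_j$ yields $-\varphi(x)=I_j+II_j$, with
\[
I_j=\int_{\R^n}P^{*}\varphi(y)\,\Bigl(\int_{\R^p}\widetilde{\Gamma}(x,0;y,\eta)\,\theta_j(\eta)\,\d\eta\Bigr)\,\d y,\qquad II_j=\int_{\R^n\times\R^p}\widetilde{\Gamma}(x,0;y,\eta)\,R^{*}\bigl(\varphi(y)\,\theta_j(\eta)\bigr)\,\d y\,\d\eta.
\]
For $I_j$, the integrand converges pointwise to $\widetilde{\Gamma}(x,0;y,\eta)\,P^{*}\varphi(y)$ (since $\theta_j\to 1$ pointwise and $0\le\theta_j\le 1$) and is dominated on $\mathrm{supp}(\varphi)\times\R^p$ by a constant multiple of $|\widetilde{\Gamma}(x,0;y,\eta)|$, an $L^1$ function by~(ii); dominated convergence together with Fubini give $I_j\to\int_{\R^n}\Gamma(x;y)\,P^{*}\varphi(y)\,\d y$.

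The heart of the argument, and its main obstacle, is showing that $II_j\to 0$; this is precisely where (S.1) and (S.2) come into play. By~\eqref{sec.one:rem_eqformPstar}, each summand of $R^{*}\bigl(\varphi(y)\theta_j(\eta)\bigr)$ has the form $r^{*}_{\alpha,\beta}(y,\eta)\,D^\alpha_y\varphi(y)\,D^\beta_\eta\theta_j(\eta)$ with $\beta\ne 0$. Taking $K:=\mathrm{supp}(\varphi)$, condition~(S.2) bounds $|r^{*}_{\alpha,\beta}(y,\eta)\,D^\beta_\eta\theta_j(\eta)|$ by a constant independent of $j$, so the integrand of $II_j$ is uniformly majorized (in $j$) by a constant multiple of $|\widetilde{\Gamma}(x,0;y,\eta)|\,\mathbf{1}_K(y)$, which lies in $L^1(\R^n\times\R^p)$ by~(ii). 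On the other hand, since $\theta_j\equiv 1$ on $\Omega_j$ and $\Omega_j\uparrow\R^p$, every $\eta\in\R^p$ eventually lies in the interior of $\{\theta_j=1\}$, so $D^\beta_\eta\theta_j(\eta)\to 0$ for each $\beta\ne 0$; therefore the integrand of $II_j$ tends to $0$ pointwise on $\R^n\times\R^p$. A final application of dominated convergence yields $II_j\to 0$, and combining the two limits in $-\varphi(x)=I_j+II_j$ delivers~\eqref{sec.two:eq_GreenGamma} for $\Gamma$. The subtle step is this simultaneous achievement of pointwise vanishing of the remainder (guaranteed by~(S.1)) and an integrable $j$-uniform majorant (furnished by~(S.2) together with~(ii)), which is exactly what the notion of saturable lifting is tailored to provide.
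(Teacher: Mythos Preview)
Your proof is correct and follows essentially the same route as the paper: you split $-\varphi(x)=I_j+II_j$ via $\widetilde{P}^*=P^*+R^*$, pass to the limit in $I_j$ by dominated convergence using~(ii), and kill $II_j$ by combining the pointwise vanishing from~(S.1) with the $j$-uniform majorant from~(S.2) and~(ii). The only cosmetic difference is that the paper phrases the bound on $R^*(\varphi\theta_j)$ slightly more explicitly before invoking dominated convergence, but the logic is identical.
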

  \begin{proof}
   First of all, thanks to \eqref{sec.one:eq_sumtildeGamma_1},
   $\Gamma$ is well-posed. In order to prove that
   $\Gamma$ is a fundamental solution for $P$ on $\R^n$,
   we have to prove the following facts: for every fixed $x \in \R^n$, one has
\begin{enumerate}
  \item  $\Gamma(x;\cdot) \in \Lloc{\R^n}$;
  \item $P\Gamma(x;\cdot) = -\mathrm{Dir}_x$ in $\mathcal{D}'(\R^n)$.
\end{enumerate}
  Assertion (1) is a trivial consequence of assumption  \eqref{sec.one:eq_sumtildeGamma_2}.
  We next prove assertion (2).

  To this end, we fix a point $x \in \R^n$ and a function
   $\varphi \in C_0^{\infty}(\R^n)$. Moreover, the Lifting
   $\widetilde{P}$ being saturable, there exists a sequence of test functions
   $\theta_j$ as in Definition \ref{defi.satLift}.
   Since the function $\widetilde{\Gamma}$
   is a fundamental solution for $\widetilde{P}$ on $\R^n\times \R^p$, we have
   (for large $j$'s so that $\theta_j(0)=1$)
   \begin{equation*}
   \begin{split}
    \int_{\R^n\times\R^p}\widetilde{\Gamma}\big(x,0;y,\eta\big)&\,
    \widetilde{P}^*\big(
    \varphi(y)\,\theta_j(\eta)\big)\,\d y\,\d\eta
    = -\varphi(x)\,\theta_j(0)= -\varphi(x);
    \end{split}
   \end{equation*}
   thus, recalling that $\widetilde{P} = P + R$
   (where $R$ is a linear PDO operating in $y$ and $\eta$),
   we get
   \begin{equation} \label{sec.one:eq_passtolim}
   \begin{split}
    -\varphi(x) & = \int_{\R^n\times\R^p}
    \widetilde{\Gamma}\big(x,0;y,\eta\big)\,
    \theta_j(\eta)\,P^*\varphi(y)\,\d y\d \eta \\
    & \quad\quad +
    \int_{\R^n\times\R^p}\widetilde{\Gamma}\big(x,0;y,\eta\big)\,
    R^*\big(\varphi(y)\,
    \theta_j(\eta)\big)\,\d y\,\d\eta =: \mathrm{I}_{j}+\mathrm{II}_{j},
   \end{split}
   \end{equation}
   with the obvious notation.
    Our aim is now to pass to the limit for $j\to\infty$ in
    \eqref{sec.one:eq_passtolim}. To this end we first notice that, if we denote
   by $K$ the support of the function
   $\varphi$, then both integrals expressing $\mathrm{I}_{j}$ and $\mathrm{II}_{j}$
   are actually performed over $K\times\R^p$.
   As for $\mathrm{I}_{j}$,
   a simple dominated convergence argument
   based on \eqref{sec.one:eq_sumtildeGamma_1} and \eqref{sec.one:eq_sumtildeGamma_2} shows that
\begin{equation}\label{sec.one_eq_limj1}
     \lim_{j\to\infty}\mathrm{I}_{j} =
     \int_{\R^n} \Gamma(x;y)\,P^*\varphi(y)\,\d y.
    \end{equation}
    We next turn to $\mathrm{II}_{j}$.
    First we observe that,
    since the sets $\{\eta:\theta_j(\eta)=1\}$
    increasingly invade $\R^p$,
    and since the operator $R^*$ always differentiate w.r.t.\,$\eta$
    (see (S.1) in the definition of saturable Lifting),
    we obtain that
    $$\lim_{j\to\infty}
    R^*\big(\varphi(y)\,
    \theta_j(\eta)\big) = 0, \quad \text{pointwise for $(y,\eta)\in
    K\times\R^p$}.$$
    Moreover, by writing $R^*$  as in \eqref{sec.one:rem_eqformPstar}, we get
    $$\big|R^*\big(\varphi(y)\,
    \theta_j(\eta)\big)\big|
    \leq \sum_{\beta \neq 0}
    \big|r^*_{\alpha,\beta}(y,\eta)\big|\cdot\big|D^\alpha_y\varphi(y)\big|
    \cdot\big|D^\beta_{\eta}\theta_j(\eta)\big|
    \leq C(\varphi)\,
    \sum_{\beta \neq 0}
    \big|r^*_{\alpha,\beta}(y,\eta)\,D^\beta_{\eta}\theta_j(\eta)\big|
    .$$
    From this, by crucially exploiting property \eqref{sec.one:eq_mainassumptiontheta}
    of the sequence $\theta_j$, we infer the existence
    of a positive constant $C=C(\varphi,K) > 0$ such that
\begin{equation*}
    \begin{split}
    \big|\widetilde{\Gamma}\big(x,0;y,\eta\big)\,
    R^*\big(\varphi(y)\,
    \theta_j(\eta)\big)\big|\leq
   C \,|\widetilde{\Gamma}\big(x,0;y,\eta\big)|,
    \end{split}
    \end{equation*}
    uniformly for $(y,\eta)\in K\times\R^p$ and $j \in \mathbb{N}$.
 Therefore, due to property \eqref{sec.one:eq_sumtildeGamma_2} of $\widetilde{\Gamma}$,
    we can apply once again the a dominated convergence argument to infer that
\begin{equation} \label{sec.one_eq_limj2}
     \lim_{j\to\infty}\mathrm{II}_{j} = 0.
\end{equation}
    By gathering together
    \eqref{sec.one_eq_limj1} and \eqref{sec.one_eq_limj2}, we can pass to the limit for $j\to\infty$ in
    \eqref{sec.one:eq_passtolim}, obtaining
    $$-\varphi(x) =  \int\Gamma(x;y)\,P^*\varphi(y)\,\d y.$$
    This ends the proof.
\end{proof}
 \begin{rem}\label{sec.one:rem_summtildeGamma2}
    We observe that, if $\widetilde{\Gamma}$ is a fundamental solution
    for $\widetilde{P}$ on $\R^n\times \R^p$, then we have, for every fixed $x\in\R^n$
    (see Definition \ref{sec.one:defi_Green}-(i)),
  \begin{equation} \label{sec.one:eq_remGammaLloc}
   (y,\eta)\mapsto \widetilde{\Gamma}\big(x,0;y,\eta\big) \in \Lloc{\R^n\times \R^p}.
   \end{equation}
   This means that the integrability assumption \eqref{sec.one:eq_sumtildeGamma_2}
   in Theorem \ref{sec.one:thm_Main} is actually an integrability condition
   at infinity; thus \eqref{sec.one:eq_sumtildeGamma_2} is equivalent to the following condition:\medskip

  \noindent \emph{for every $x \in \R^n$ and every
  compact set $K\subseteq \R^n$, there exists a compact set $K' \subseteq \R^p$
   such that
   $$(y,\eta)\mapsto \widetilde{\Gamma}\big(x,0;y,\eta\big)\quad
   \text{belongs to}\quad L^1\big(K\times
   (\R^p\setminus K')\big).$$}
  \end{rem}
 The next task is to consider the other properties (ii)-to-(v) possibly required to a well-behaved fundamental solution,
 and to find sufficient conditions on $\widetilde{\Gamma}$ in such a way that
 these are inherited by the $\Gamma$ function in \eqref{sec.one:mainThm_defGamma}.  Clearly, the nonnegativity
 property (ii) of $\widetilde{\Gamma}$ passes to $\Gamma$; also, if $\widetilde{\Gamma}>0$ then the same is true of $\Gamma$.
 In the following result we study condition (iv) of vanishing at infinity.
  \begin{prop} [Continuity and limit at infinity] \label{sec.one:prop_contVanishGamma}
   Let the notation and the hypotheses of Theorem
   \ref{sec.one:thm_Main} apply. Let us assume, in addition, that
   $\widetilde{\Gamma}$ satisfies the following bound property: \medskip

   \emph{(B)}\,\, For every fixed $x \in \R^n$, there
   exist a compact set $K_x\subseteq \R^p$
   and a nonnegative function $g_{x}\in L^1(\R^p\setminus K_x)$ such that
   \begin{equation} \label{sec.one:eq_AssBg}
    \widetilde{\Gamma}\big(x,0;y,\eta\big) \leq g_{x}(\eta), \quad
    \text{for every $y\in \R^n$ and every $\eta \in \R^p\setminus K_x$.}
   \end{equation}
   Then the following facts hold true:
   \begin{itemize}
    \item[(a)] if, for every fixed $x \in \R^n$, the function
    $(y,\eta)\mapsto \widetilde{\Gamma}\big(x,0;y,\eta\big)$ is continuous away from $(x,0)$,
    then the function $y\mapsto \Gamma(x;y)$ is continuous on $\R^n\setminus\{x\}$;\medskip

    \item[(b)] if, for every fixed $x \in \R^n$, the function
    $(y,\eta)\mapsto \widetilde{\Gamma}\big(x,0;y,\eta\big)$ vanishes at infinity, then the
    same is true of $y\mapsto \Gamma(x;y)$.
   \end{itemize}
  \end{prop}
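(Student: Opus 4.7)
The natural approach is to split the defining integral
\begin{equation*}
 \Gamma(x;y) = \int_{K_x}\widetilde{\Gamma}(x,0;y,\eta)\,\d\eta + \int_{\R^p\setminus K_x}\widetilde{\Gamma}(x,0;y,\eta)\,\d\eta =: A(y) + B(y),
\end{equation*}
and to handle each piece by a dominated-convergence argument, exploiting bound \emph{(B)} on the outer region and compactness plus continuity/vanishing on the inner region $K_x$. Throughout $x\in\R^n$ is fixed.

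\textbf{Proof of (a).} Fix $y_0\in\R^n\setminus\{x\}$ and a relatively compact open neighborhood $V$ of $y_0$ whose closure does not contain $x$. For the outer piece $B(y)$ with $y\in V$: by the assumed continuity of $\widetilde{\Gamma}(x,0;\cdot,\cdot)$ away from $(x,0)$, the integrand converges pointwise in $\eta$ as $y\to y_0$ (note $(y_0,\eta)\neq(x,0)$ for every $\eta$ since $y_0\neq x$), and \emph{(B)} supplies the integrable dominating function $g_x$. Hence $B(y)\to B(y_0)$. For the inner piece $A(y)$: the set $\overline{V}\times K_x$ is compact and disjoint from $(x,0)$, so by continuity $\widetilde{\Gamma}(x,0;\cdot,\cdot)$ is bounded on it by some constant $M$; since $K_x$ has finite Lebesgue measure, the constant $M$ is an integrable dominant on $K_x$, and dominated convergence again yields $A(y)\to A(y_0)$. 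Adding the two limits gives continuity at $y_0$.

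\textbf{Proof of (b).} We must show $\Gamma(x;y)\to 0$ as $\|y\|\to\infty$. For $B(y)$: for each fixed $\eta\in\R^p\setminus K_x$, the vanishing-at-infinity hypothesis on $\widetilde{\Gamma}(x,0;\cdot,\cdot)$ implies $\widetilde{\Gamma}(x,0;y,\eta)\to 0$ as $\|y\|\to\infty$ (because then $\|(y,\eta)\|\to\infty$), and \emph{(B)} provides the integrable dominant $g_x$; dominated convergence yields $B(y)\to 0$. For $A(y)$: fix $\varepsilon>0$ and, using the vanishing of $\widetilde{\Gamma}(x,0;\cdot,\cdot)$ at infinity, choose $R>0$ such that $\widetilde{\Gamma}(x,0;y,\eta)<\varepsilon$ whenever $\|(y,\eta)\|>R$. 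Since $K_x$ is bounded, say $K_x\subseteq\{\|\eta\|\leq \rho\}$, for $\|y\|>R+\rho$ one has $\|(y,\eta)\|>R$ uniformly for $\eta\in K_x$, so $A(y)\leq \varepsilon\,|K_x|$; letting $\varepsilon\downarrow 0$ gives $A(y)\to 0$. Summing, $\Gamma(x;y)\to 0$.

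\textbf{Main obstacle.} The only delicate point is producing a uniform dominating function on the inner region $K_x$: \emph{(B)} gives nothing there, so one must extract local bounds from the pointwise regularity of $\widetilde{\Gamma}$ together with compactness of $\overline{V}\times K_x$ in (a), and from uniform vanishing at infinity on the compact slab $\R^n\times K_x$ in (b). Both reductions hinge on $K_x$ being bounded, which is exactly the content of hypothesis \emph{(B)}. Uniformity of the limits with respect to $x$ ranging in a compact set would require the constants $K_x$ and $g_x$ to depend in a controlled way on $x$; this is not claimed in the statement, so we do not pursue it.
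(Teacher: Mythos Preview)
Your proof is correct and follows essentially the same strategy as the paper's: split the integral over $K_x$ and $\R^p\setminus K_x$, apply dominated convergence with the constant bound coming from compactness on the inner piece and with $g_x$ on the outer piece. The paper actually skips the argument for (b) entirely (``completely analogous''), so your write-up is in fact more detailed there; the only cosmetic point is that, since $\widetilde{\Gamma}$ is not assumed nonnegative in this general proposition, your bound $A(y)\le \varepsilon\,|K_x|$ should be stated as $|A(y)|\le \varepsilon\,|K_x|$.
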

  \begin{proof}
   Let us prove (a). We fix a point $y_0 \in \R^n\setminus\{x\}$
   and a real $\rho > 0$
   such that the Euclidean ball $B_\rho(y_0)$ centered at $y_0$ and radius $\rho$ is contained in $\R^n\setminus\{x\}$.
   Moreover, we choose a sequence
   $(y_j)_j$ in this ball
   converging to $y_0$ as $j\to\infty$.
   If $K_x \subseteq \R^p$ is as in assumption (B), for every $j \in \mathbb{N}$ we have
\begin{equation} \label{sec.one:eq_propTopassLim}
   \begin{split}
   \Gamma(x;y_j) &=   \int_{K_x}\widetilde{\Gamma}\big(x,0;y_j,\eta\big)\,\d \eta
   + \int_{\R^p\setminus K_x}\widetilde{\Gamma}\big(x,0;y_j,\eta\big)\,\d \eta.
   \end{split}
   \end{equation}
   We pass to the limit as $j\to\infty$ in the right-hand
   side of \eqref{sec.one:eq_propTopassLim}. To this end we first
   observe that under condition (a) we obviously have
   $\lim_{j\to\infty}\widetilde{\Gamma}(x,0;y_j,\eta)
   = \widetilde{\Gamma}(x,0;y_0,\eta)$ for every $\eta \in \R^p$.
   Moreover, since the set ${K} :=
   \overline{B_\rho(y_0)}\times K_x$ is compact,
   there exists a positive real
   constant $M_x > 0$ such that
   $$\widetilde{\Gamma}(x,0;y_j,\eta)\leq
   M_x, \quad \text{for every $j\in\mathbb{N}$ and every $\eta\in K_x$}.$$
  By a dominated convergence argument, we obtain
   \begin{equation} \label{sec.one:eq_limIntoverKx}
    \lim_{j\to\infty}\int_{K_x}\widetilde{\Gamma}\big(x,0;y_j,\eta\big)\,\d \eta
    = \int_{K_x}\widetilde{\Gamma}\big(x,0;y_0,\eta\big)\,\d \eta.
   \end{equation}
   As for the second integral in the rhs  of
   \eqref{sec.one:eq_propTopassLim}, assumption (B) is shaped in such a way that another
   dominated convergence argument can apply, so that
\begin{equation} \label{sec.one:eq_limIntoutKx}
    \lim_{j\to\infty}
    \int_{\R^p\setminus K_x}\widetilde{\Gamma}\big(x,0;y_j,\eta\big)\,\d \eta
    =
    \int_{\R^p\setminus K_x}\widetilde{\Gamma}\big(x,0;y_0,\eta\big)\,\d \eta.
\end{equation}
   By gathering together identities \eqref{sec.one:eq_limIntoverKx}
   and \eqref{sec.one:eq_limIntoutKx} we obtain the continuity of $\Gamma(x;\cdot)$ at $y_0$.

   The proof of (b) is completely analogous and it is skipped.
  \end{proof}
 We next take into account condition (v):
  \begin{prop}[Pole of $\Gamma$]\label{sec.one:prop_poleGamma}
   Let the notation and the hypotheses of Theorem
   \ref{sec.one:thm_Main} apply. Let us assume, in addition, that $\widetilde{\Gamma}$ enjoys
   the following properties:
\begin{itemize}
   \item[(a)] $\widetilde{\Gamma}$ is nonnegative;

  \item[(b)] for every $x \in \R^n$, the function
   $(y,\eta)\mapsto \widetilde{\Gamma}(x,0;y,\eta)$ is lower semi-continuous
   outside $(x,0)$;

  \item[(c)] for every $x \in \R^n$, the function
   $\eta\mapsto\widetilde{\Gamma}\big(x,0;x,\eta)$
   is not integrable on $\R^p$.
\end{itemize}
  Then the function $y\mapsto \Gamma(x;y)$
  defined in \eqref{sec.one:mainThm_defGamma} has a pole at the point $x$, i.e.,
  $$\lim_{y\to x} \Gamma(x;y)=\infty.$$
  \end{prop}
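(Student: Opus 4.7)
The plan is to derive the blow-up of $\Gamma(x;\cdot)$ at $x$ from a combination of Fatou's lemma, the lower semi-continuity assumption on $\widetilde{\Gamma}$ along the fibers of $\pi$, and the non-integrability assumption on $\widetilde{\Gamma}(x,0;x,\cdot)$. The nonnegativity (a) is precisely what allows Fatou's lemma to apply without any domination hypothesis, so it intervenes at a crucial step rather than being an auxiliary feature.

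First I would fix $x\in \R^n$ and, in order to prove $\lim_{y\to x}\Gamma(x;y)=\infty$, pick an arbitrary sequence $(y_j)_j$ in $\R^n\setminus\{x\}$ converging to $x$; it suffices to show that $\liminf_{j\to\infty}\Gamma(x;y_j)=\infty$. Recalling the definition \eqref{sec.one:mainThm_defGamma} of $\Gamma$ and using assumption (a), I would apply Fatou's lemma to obtain
\begin{equation*}
 \liminf_{j\to\infty}\Gamma(x;y_j)
 =\liminf_{j\to\infty}\int_{\R^p}\widetilde{\Gamma}(x,0;y_j,\eta)\,\d\eta
 \geq \int_{\R^p}\liminf_{j\to\infty}\widetilde{\Gamma}(x,0;y_j,\eta)\,\d\eta.
\end{equation*}
Next I would use the lower semi-continuity hypothesis (b): for every $\eta\in\R^p\setminus\{0\}$ the point $(x,\eta)$ is distinct from $(x,0)$ (the unique singularity of $\widetilde{\Gamma}(x,0;\cdot,\cdot)$ in the pole variables), so that $(y_j,\eta)\to(x,\eta)$ lies eventually in the region of lower semi-continuity, and therefore
\begin{equation*}
 \widetilde{\Gamma}(x,0;x,\eta)\leq \liminf_{j\to\infty}\widetilde{\Gamma}(x,0;y_j,\eta)
 \qquad \text{for every $\eta\in\R^p\setminus\{0\}$.}
\end{equation*}

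Since $\{0\}\subset\R^p$ has Lebesgue measure zero, I can integrate the previous inequality over $\R^p$ and combine with the Fatou estimate above to get
\begin{equation*}
 \liminf_{j\to\infty}\Gamma(x;y_j)\geq \int_{\R^p}\widetilde{\Gamma}(x,0;x,\eta)\,\d\eta.
\end{equation*}
Finally, the non-integrability assumption (c) (together with the nonnegativity (a), which makes the integral well defined in $[0,\infty]$) forces the right-hand side to equal $+\infty$, whence $\liminf_{j\to\infty}\Gamma(x;y_j)=+\infty$. Because the sequence $(y_j)_j$ was arbitrary, this proves $\lim_{y\to x}\Gamma(x;y)=\infty$.

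As for obstacles, the argument is essentially mechanical once Fatou and lsc are invoked; the only potentially delicate point is the behavior of $(y_j,0)$ near the singularity $(x,0)$, where lower semi-continuity is not assumed. This is harmless because the offending slice $\{\eta=0\}$ is a null set in $\R^p$ (here one uses $p\geq 1$, which is automatic in our setup since $\widetilde{P}$ is a genuine lifting), and therefore does not contribute to the $\eta$-integral. No other subtlety seems to arise.
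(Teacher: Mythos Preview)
Your proof is correct and follows essentially the same approach as the paper: take an arbitrary sequence $y_j\to x$, apply Fatou's lemma (using nonnegativity), then use lower semi-continuity to bound the integrand below by $\widetilde{\Gamma}(x,0;x,\eta)$, and conclude via hypothesis (c). You are in fact slightly more careful than the paper in explicitly excluding the null set $\{\eta=0\}$ where lower semi-continuity is not assumed.
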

  \begin{proof}
   Let $(y_j)_j$ be a sequence in $\R^n\setminus\{x\}$
   converging to $x$ as $j\to\infty$. Since, by our assumptions, the function
   $(y,\eta)\mapsto\widetilde{\Gamma}(x,0;y,\eta)$
   is nonnegative and lower semi-continuous on $\R^n\times \R^p\setminus\{(x,0)\}$,
   from Fatou's lemma we obtain
   \begin{align*}
   \liminf_{j\to\infty}\Gamma(x;y_j)& \geq \int_{\R^p}
   \liminf_{j\to\infty} \widetilde{\Gamma}\big(x,0;y_j,\eta\big)\,\d\eta
   \geq \int_{\R^p}\widetilde{\Gamma}\big(x,0;x,\eta\big)\,\d \eta.
   \end{align*}
 Taking into account hypothesis (c), the proof is complete.
  \end{proof}
  In the sequel, we shall apply the results of the present section to PDOs of the following form:
  we consider a sum of squares $\LL=\sum_{j=1}^m X_j^2$ of H\"ormander vector fields
  which are $\dela$-homogeneous of degree $1$ w.r.t.\,a family of dilations
  $\dela$ as in \eqref{intro.dela}. We want to prove that $\LL$ can be lifted
  to a sub-Laplacian on a Carnot group by means of a saturable Lifting, in the sense of
  Definition \ref{defi.satLift}.

  To this end, we firstly need to recall Folland's version, \cite{Folland2},
  of Rothschild and Stein Lifting for homogeneous vector fields: this is done in  Section
  \ref{sec.two}. Secondly, we prove that we can perform a change of variable
  giving a saturable Lifting, which is provided in Section \ref{sec:changeofv}.
  Finally, in Section \ref{sec:GreenHOM} we shall show that all the hypotheses
  of Theorem \ref{sec.one:thm_Main} are satisfied: this will prove that
  $\LL$ admits a fundamental solution, which turns out have further selected properties (ii)-(iii)-(iv).
\section{Recalls on the Lifting of homogeneous sums of squares} \label{sec.two}
 Let us fix a family $\{X_1,\ldots,X_m\}$ of linearly
  independent smooth vector fields on Euclidean space $\R^n$, satisfying the following
  properties:
  \begin{itemize}
    \item[(H1)]
  $X_1,\ldots,X_m$ are $\delta_\lambda$-homogeneous of degree
  $1$ with respect to a family of non-isotropic dilations $\{\delta_{\lambda}\}_{\lambda > 0}$
  as in \eqref{intro.dela};

     \item[(H2)]
  $X_1,\ldots,X_m$ satisfy H\"ormander's rank
  condition at $0$, i.e.,
  $$\dim\big\{X(0) : X \in \mathrm{Lie}\{X_1,\ldots,X_m\}\big\} = n.$$
  \end{itemize}
  \begin{rem}\label{sec.two_1:remHomX}
  For a future reference, we remark that the homogeneity assumption
  (H1) is equivalent to any of the following facts:
  \begin{itemize}
   \item If $X_j = \sum_{k = 1}^na_{k,j}(x)\,\de_{x_k}$,
   the function $a_{k,j}$ is
   $\delta_\lambda$-homogeneous of degree $\sigma_k - 1$.

   \item For every fixed
   $j \in \{1,\ldots,m\}$, the following identity holds true
   \begin{equation} \label{sec.two_1:charactHomog}
    \delta_\lambda (X_j(x)) = \lambda\,X\big(\delta_\lambda(x)\big),
    \quad \text{for every $x \in \R^n$ and every $\lambda>0$}.
   \end{equation}
  \end{itemize}
 As a consequence, it is important to highlight that $a_{k,j}$ is a polynomial and
 it is independent of $x_k,\ldots,x_n$. This last fact ensures that the
 vector fields $X_1,\ldots,X_m$ are complete.
\end{rem}

   Our main goal is to prove the following theorem, by using Folland's results in \cite{Folland2}
   plus a change of variable (later introduced in Section \ref{sec:changeofv}).
\begin{thm}\label{sec.two_1:thmLiftMain}
   Let $N=\mathrm{dim}(\mathrm{Lie}\{X_1,\ldots,X_m\})$.
   There exists a homogeneous Carnot group
   $\G=\big(\RN,*,D_{\lambda}\big)$, with $m$ generators and nilpotent of step $r$,
   and there exists a system $\{Z_1,\ldots,Z_m\}$ of Lie-generators of $\mathrm{Lie}(\G)$, such
   that (for every $i = 1,\ldots,m$) $Z_i$ is a lifting of $X_i$, via the projection of $\pi:\R^N\to\R^n$
   onto the first $n$ variables.
\end{thm}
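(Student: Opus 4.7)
The plan is to invoke Folland's global lifting theorem \cite{Folland2} as the starting point and then to produce, as the main technical contribution, a global polynomial change of variable on $\R^N$ which simultaneously straightens the abstract projection $\pi$ into the canonical one onto the first $n$ coordinates and turns the abstract Carnot-group structure into a homogeneous one.

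I would first apply \cite{Folland2} exactly as recalled in the introduction. The Lie algebra $\mathfrak{a}=\mathrm{Lie}\{X_1,\ldots,X_m\}$ is nilpotent and stratified of step $r$ thanks to $\dela$-homogeneity, and equipping it with the Campbell--Baker--Hausdorff operation \eqref{CBHoper} turns $\mathbb{A}=(\mathfrak{a},\diamond)$ into a Carnot group of dimension $N$. After fixing a basis $\mathcal{A}$ of $\mathfrak{a}$ adapted to the stratification and containing $X_1,\ldots,X_m$ as a basis of $\mathfrak{a}_1$, I identify $\mathfrak{a}\equiv\R^N$. Folland's theorem then furnishes the polynomial surjective time-one-flow map $\pi:\R^N\to\R^n$ together with the left-invariant generators $J_1,\ldots,J_m$ of $\mathrm{Lie}(\mathbb{A})$ which are $\pi$-related to $X_1,\ldots,X_m$.

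Second, I refine the basis. Thanks to the H\"ormander rank condition (H2), among the iterated brackets of the $X_i$'s one can select $n$ elements $Y_1,\ldots,Y_n\in\mathcal{A}$ with the two properties that (a) $\{Y_j(0)\}_{j=1}^n$ is a basis of $\R^n$, and (b) $Y_j$ is homogeneous of the same weight $\sigma_j$ as the $j$-th coordinate of $\R^n$. Reordering $\mathcal{A}$ so that these are the first $n$ basis elements, I write points of $\mathbb{A}\equiv\R^N$ as $a=(a_1,\ldots,a_N)$ and define the change of variable
\[
 \Phi(a)\;:=\;\bigl(\pi_1(a),\ldots,\pi_n(a),\,a_{n+1},\ldots,a_N\bigr),
\]
so that, by construction, $\pi\circ\Phi^{-1}$ is the canonical projection of $\R^N=\R^n\times\R^p$ onto the first factor. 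Since $\pi$ is polynomial, so is $\Phi$; the matching of weights in (b) makes $\Phi$ compatible with the natural product dilation on $\R^n\times\R^p$.

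The hardest step, to be carried out in Section \ref{sec:changeofv}, is to upgrade the clear local invertibility of $\Phi$ at the origin (where $d\Phi(0)$ is block-triangular with invertible diagonal blocks, by construction of the $Y_j$) to a \emph{global} polynomial diffeomorphism of $\R^N$, and then to check that the transported triple $\G=(\R^N,\star,d_\lambda)$, with $x\star y:=\Phi\bigl(\Phi^{-1}(x)\diamond\Phi^{-1}(y)\bigr)$ and $d_\lambda(x,\xi):=(\dela(x),\delta^*_\lambda(\xi))$, is a homogeneous Carnot group in the sense of \cite[Sec.~1.4]{BLU}. Globality of $\Phi^{-1}$ will follow from a triangular/inductive argument: the weighted-polynomial structure of $\Phi$, combined with the ordering $\sigma_1\leq\cdots\leq\sigma_n$, lets one solve for $a_1,\ldots,a_n$ in terms of $x_1,\ldots,x_n$ recursively, weight by weight, by polynomial substitution. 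Setting $Z_i:=\Phi_*J_i$, the $Z_i$ remain Lie-generators of $\mathrm{Lie}(\G)$, and the $\pi$-relatedness of $J_i$ with $X_i$ translates, in the new coordinates, into the identity $Z_i=X_i+R_i$ with $R_i$ differentiating only in the $\xi$ variables, i.e.\ $Z_i$ is a lifting of $X_i$ via the canonical projection $\R^N\to\R^n$, as required.
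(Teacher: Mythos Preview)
Your proposal is correct and follows essentially the same architecture as the paper: invoke Folland's global lifting to get $\mathbb{A}$, $\pi$, and the $J_i$'s; select $n$ basis elements whose values at $0$ span $\R^n$ with the correct weights; then define the change of variable $\Phi(a)=(\pi(a),a_{n+1},\ldots,a_N)$ (the paper writes this as $T(a)=(\pi(a),a_{j_1},\ldots,a_{j_p})$ without reordering the basis first, but this is only bookkeeping), and finally transport the group structure and set $Z_i=\Phi_*J_i$.

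The one substantive difference is in how you prove that $\Phi$ is a \emph{global} diffeomorphism. You propose a direct triangular/recursive inversion, solving weight-by-weight using the weighted-polynomial structure of $\pi$; this works, because $\pi_k$ is $D_\lambda$-homogeneous of degree $\sigma_k$, so its linear part in the variables of weight exactly $\sigma_k$ has an invertible coefficient block (this block invertibility follows from the fact that $(X_j(0))_k=0$ unless $\sigma_k$ equals the weight of $X_j$, so the matrix $(Y_1(0)\cdots Y_n(0))$ is block-diagonal by weight and invertible). The paper instead uses a shorter dilation argument: $\Phi$ intertwines $D_\lambda$ with $d_\lambda$, it is locally invertible at $0$ by the Jacobian computation, and then any two points can be scaled into the local chart to get global injectivity and surjectivity, with smoothness of $\Phi^{-1}$ obtained the same way. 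Your argument is more explicit and yields a polynomial formula for $\Phi^{-1}$; the paper's is quicker and avoids tracking the block structure.
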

 The proof if this theorem will be constructive, modulo the knowledge
 of the Campbell-Baker-Hausdorff operation.
 To begin with,  let $\mathfrak{a}$ be the Lie algebra generated by $X_1,\ldots,X_m$:
  $$\mathfrak{a} := \mathrm{Lie}\big\{X_1,\ldots,X_m\big\}.$$
  It follows from the homogeneity assumption (H1) that every
  commutator of $X_1,\ldots,X_m$ containing more than $\sigma_n$ terms vanishes
  identically, hence $\mathfrak{a}$ is {nilpotent} of step $r \leq \sigma_n$.
  Moreover, the rank condition (H2) ensures that $r$ cannot be smaller than
  $\sigma_n$, so that $\mathfrak{a}$ is nilpotent
  of step equal to $\sigma_n$, which is therefore an integer which we also denote by $r$.

  As a consequence, $\mathfrak{a}$ being finitely generated, its dimension
  (as a subspace of the linear space of the smooth vector fields on $\R^n$) is {finite}.
  We then set
  $$N := \mathrm{dim}\big(\mathfrak{a}\big)\quad \text{and}\quad p := N - n,$$
  and we assume from now on that $ N>n$.
  Now, since $\mathfrak{a}$ is generated by
  $X_1,\ldots,X_m$ and since it is nilpotent of step $r$, we have
  \begin{equation} \label{sec.two_1:astratified}
   \mathfrak{a} = \mathfrak{a}_1\oplus\cdots\oplus\mathfrak{a}_{r},
   \quad \text{with} \quad \begin{cases}
   \mathfrak{a}_1 := \mathrm{span}\big\{X_1,\ldots,X_m\}, \\
   \mathfrak{a}_k := [\mathfrak{a}_1,\mathfrak{a}_{k-1}]\quad \text{for $2 \leq k \leq r$}; \\
   [\mathfrak{a}_1,\mathfrak{a}_{r}] = \{0\}.
   \end{cases}
  \end{equation}
  In other words, the Lie algebra $\mathfrak{a}$ is {stratified}.
  In particular,
  a vector field $X \in \mathfrak{a}$ belongs to $\mathfrak{a}_k$
  (with $1 \leq k \leq r$) {if and only}
  if $X$ is $\delta_\lambda$-homogeneous of degree $k$.

  By means of \eqref{sec.two_1:astratified},
  we can define a family $\{\Delta_\lambda\}_{\lambda > 0}$ of dilations
  on $\mathfrak{a}$ in the following way:
  \begin{equation} \label{sec.two_1:def_Deltaa}
   \Delta_\lambda(X) = \sum_{k = 1}^{r}
   \lambda^k\,V_k, \quad \text{where $X =\sum_{k=1}^r V_k$ and $V_k\in \mathfrak{a}_k$ for any $k=1,\ldots,r$}.
  \end{equation}
  Moreover, since $\mathfrak{a}$ is nilpotent, the
  {Campbell-Baker-Hausdorff multiplication} $\diamond$ in \eqref{CBHoper}
  defines a group on $\mathfrak{a}$.
  We now transfer the operation $\diamond$ and the dilation
  $\{\Delta_\lambda\}_{\lambda > 0}$ to a copy of $\mathfrak{a}$ by fixing
  a suitable coordinate system on (the finite-dimensional vector space)
  $\mathfrak{a}$.

  To this end we first observe that,
  by means of \eqref{sec.two_1:astratified}
  and of the rank condition (H2), we can complete
  $X_1,\ldots,X_m$ to form a {basis}
 $\mathcal{A} = \{X_1,\ldots,X_m,X_{m+1},\ldots,X_N\}$
  of $\mathfrak{a}$ such that:
  \begin{itemize}
   \item\label{sec.two_1:prp1basis}
 the set $\{X_1(0),\ldots,X_N(0)\}$ is a set
   of \emph{generators} for the vector space $\R^n$;

   \item   \label{sec.two_1:prp2basis}
   the basis $\mathcal{A}$ is \emph{adapted} to the
   stratification:
  $\mathcal{A} = \big\{X^{(1)}_1,\ldots,X^{(1)}_{m_1},\ldots,X^{(r)}_1,
   \ldots,X^{(r)}_{m_{r}}\big\},$
   where $m_1 = m$, $X^{(1)}_j = X_j$ for every $j = 1,\ldots, m$ and, for every
   $k=2,\ldots,r$,
   $$m_k=\dim\big(\mathfrak{a}_k\big) \quad \text{and} \quad
   \mathfrak{a}_k = \mathrm{span}\big(\{X^{(k)}_1,\ldots,X^{(k)}_{m_k}\}\big).$$
  \end{itemize}
  We then consider the linear isomorphism $\Phi $ associated
  with the basis $\mathcal{A}$, i.e.,
  $$\Phi : \RN\longto\mathfrak{a}, \quad \Phi (a):=
  \sum_{j = 1}^Na_j\,X_j.$$
  In the sequel we also set, for brevity,
  $a\cdot X := \sum_{j = 1}^Na_j\,X_j$.
  Next we define an operation $*$ and a family of dilations
  $\{D_\lambda\}_{\lambda > 0}$ on $\RN$ by pushing $\diamond$ and $D_\lambda$:
  \begin{align}
  a*b & := \Phi ^{-1}\big(\Phi (a)
  \diamond\Phi (b)\big), \quad \text{for every $a,b \in \RN$,}
  \label{sec.two_1:def_opstar}
  \\
  D_\lambda & : \RN\longto\RN, \quad D_\lambda(a) :=
  \Phi ^{-1}(\Delta_\lambda(\Phi (a))).
  \label{sec.two_1:def_Dlambda}
  \end{align}
\begin{rem}\label{sec.two_1:rem_idstarD}
 The following facts hold:
  \begin{itemize}
   \item[(a)] For every $a,b \in \RN$, the operations
   $*$ and $\diamond$ are related by the identity
   \begin{equation} \label{sec.two_1:eq_idstar}
    (a * b)\cdot X 
    =   (a \cdot X)\diamond(b \cdot X).
   \end{equation}

   \item[(b)] For every $\lambda > 0$ and every $a \in \RN$,
   the dilations $D_\lambda$ and $\Delta_\lambda$ are related
   by the identity
   \begin{equation} \label{sec.two_1:eq_idDlambda}
     D_\lambda(a)\cdot X 
      = \Delta_\lambda(a\cdot X).
   \end{equation}
\end{itemize}
   As a consequence of the above identity
   \eqref{sec.two_1:eq_idDlambda}, the dilation $D_\lambda$ can be written
   as follows
   $$D_\lambda(a) = (\lambda^{s_1}a_1,\ldots,\lambda^{s_N}a_N), \quad
   \text{for every $a \in \RN$}, $$
   where $1 = s_1 \leq \ldots \leq s_N$ are consecutive integers between
   $1$ and $r$, and
   $$\big(s_1,\ldots,s_N\big) = \big(\underbrace{1,\ldots,1}_{m},
   \underbrace{2,\ldots,2}_{m_2},\ldots,
   \underbrace{r,\ldots,r}_{m_{r}}\big).$$
   With this notation, $X_1,\ldots,X_N$
   are $\delta_\lambda$-homogeneous of degrees $s_1,\ldots,s_N$ respectively,
   and one has
   $$\Delta_\lambda(X_i) = \lambda^{s_i}\,X_i, \quad \text{for every
   $i = 1,\ldots,N$}.$$
  \end{rem}
  \noindent As it is reasonable to expect, the following fact holds true (for a proof see \cite[Theorem 17.4.2]{BLU}):\medskip

  \emph{The triple $\mathbb{A} = (\RN,*,D_\lambda)$ is a
   homogeneous Carnot group on $\RN$, with $m$ generators and nilpotent of
   step $r$.
   Furthermore, the Lie algebra $\mathrm{Lie}(\mathbb{A})$
   of $\mathbb{A}$ is isomorphic to $\mathfrak{a}$.}\medskip

 Following Folland \cite{Folland2}, we consider the crucial map
  \begin{equation} \label{sec.two_1:def_pi}
   \pi: \RN\longto \R^n, \quad \pi(a) := 
   \Psi^{a\cdot X}_t(0),
  \end{equation}
  where, for every fixed vector field $V \in \mathfrak{a}$,
  we denote by $\Psi^V_t(0)$ the integral curve at time $t$
  of the vector field $V$ starting from $0\in \R^n$ at time $0$.
  We also use the notation $\exp(tV)(0)$ for $\Psi^V_t(0)$. We explicitly
  observe that $\pi$ is well-posed, since {any vector field}
  in $\mathfrak{a}$ is complete (see Remark \ref{sec.two_1:remHomX}).
  The selected properties of $\pi$ are given in the following result, proved
  in \cite{Folland2}; we give the proof for the reading convenience.
\begin{thm}[Folland, \cite{Folland2}]\label{sec.two_1:lempropPi}
  The map $\pi$ defined in
   \eqref{sec.two_1:def_pi} satisfies the following properties.
   \begin{enumerate}
    \item For every fixed $\lambda > 0$, one has
    \begin{equation} \label{sec.two_1:eq_homPi}
     \pi\big(D_\lambda(a)\big) = \delta_\lambda\big(\pi(a)\big),
     \quad \text{for every $a \in \RN$}.
    \end{equation}

    \item $\pi$ is a surjective polynomial map.

    \item
     Let $J_1,\ldots,J_N$ be the (unique) vector fields in $\mathrm{Lie}(\mathbb{A})$
    coinciding at $0\in \RN$ with the coordinate partial derivatives; then, for every $j = i,\ldots, N$, one has
\begin{equation} \label{sec.two_1:eq_liftone}
    \d\pi (J_i)(a) = X_i(\pi(a)), \quad \text{for every $a \in \RN$}.
\end{equation}
\end{enumerate}
\end{thm}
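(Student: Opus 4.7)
My plan is to verify the three assertions of Theorem \ref{sec.two_1:lempropPi} in the order (1), (2), (3), exploiting the fact that flows of elements of $\mathfrak{a}$ are global (Remark \ref{sec.two_1:remHomX}), the graded structure of $\mathfrak{a}$, and the Campbell--Baker--Hausdorff identity.

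For part (1), I would use the translation of (H1) into the vector-field identity
$\d\delta_\lambda|_x\bigl(V_k(x)\bigr)=\lambda^k\,V_k(\delta_\lambda(x))$,
valid for every $V_k\in\mathfrak{a}_k$. Writing $V=a\cdot X=V_1+\cdots+V_r$ with $V_k\in\mathfrak{a}_k$, and setting $u(t):=\delta_\lambda(\Psi^{V}_t(0))$, a direct chain rule computation gives $u'(t)=\sum_k\lambda^k V_k(u(t))=\Delta_\lambda(V)(u(t))$. Since $u(0)=0$, uniqueness of integral curves yields $u(t)=\Psi^{\Delta_\lambda V}_t(0)$; evaluating at $t=1$ and using the defining identity \eqref{sec.two_1:eq_idDlambda} gives $\delta_\lambda(\pi(a))=\Psi^{D_\lambda(a)\cdot X}_1(0)=\pi(D_\lambda(a))$.

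For part (2), I would first get that $\pi$ is polynomial by integrating the ODE $\dot\gamma=V(\gamma)$ by quadratures. The homogeneity of the coefficients of $X_j$ stated in Remark \ref{sec.two_1:remHomX} forces the $k$-th component of any $V\in\mathfrak{a}$ to be a polynomial depending only on $x_1,\dots,x_{k-1}$. Hence the system is triangular: one gets $\gamma_1(t)$ as a linear function of $t$ and of $a$, then $\gamma_2(t)$ by integrating a polynomial in $\gamma_1$ and $a$, etc. At $t=1$ one obtains $\pi(a)$ as a polynomial function of $a$. For surjectivity, part (3) (to be proved next) and the basis-choice $\mathcal{A}$, combined with the rank condition (H2), yield $\d\pi|_0(J_i|_0)=X_i(0)$ for $i=1,\dots,N$ and the vectors $\{X_1(0),\dots,X_N(0)\}$ span $\R^n$, so $\pi$ is a submersion at $0$; hence $\pi(\R^N)$ contains a neighbourhood $U$ of $0$. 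The homogeneity from part (1) shows that $\delta_\lambda(U)\subseteq\pi(\R^N)$ for every $\lambda>0$, and since $\bigcup_{\lambda>0}\delta_\lambda(U)=\R^n$ (for small $\lambda$ any point of $\R^n$ gets contracted into $U$), surjectivity follows.

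Part (3) is the main obstacle and relies on a careful use of the CBH identity for flows. The key ingredient I would use is the following: since $\mathfrak{a}$ is a nilpotent Lie algebra of smooth complete vector fields on $\R^n$, for every $A,B\in\mathfrak{a}$ and every $x\in\R^n$ one has
\begin{equation*}
\Psi^{A\diamond B}_1(x)=\Psi^{B}_1\bigl(\Psi^{A}_1(x)\bigr);
\end{equation*}
this is a consequence of the BCH formula, granted by nilpotency and completeness, and it is the form of the identity compatible with the left-invariant structure of $\mathbb{A}$ (the order of composition being dictated by the choice of $J_i$ as left-invariant on $\mathbb{A}$). Since $J_i$ is the left-invariant extension of $\partial_{a_i}|_0$, one has $J_i(a)=\tfrac{\d}{\d t}\big|_{t=0}(a* te_i)$; then $(a*te_i)\cdot X=(a\cdot X)\diamond(tX_i)$ by \eqref{sec.two_1:eq_idstar}, so
\begin{equation*}
\pi(a*te_i)=\Psi^{(a\cdot X)\diamond(tX_i)}_1(0)=\Psi^{tX_i}_1\bigl(\Psi^{a\cdot X}_1(0)\bigr)=\Psi^{X_i}_t(\pi(a)).
\end{equation*}
Differentiating at $t=0$ yields $\d\pi|_a(J_i(a))=X_i(\pi(a))$, which is \eqref{sec.two_1:eq_liftone}. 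The delicate point I expect to need most care with is checking the \emph{order} in the BCH identity and matching it with the convention adopted for the left-invariant vector fields $J_i$ on $(\R^N,*)$; a wrong ordering would produce $\d\Psi^{a\cdot X}_1|_0(X_i(0))$ instead of $X_i(\pi(a))$, so it is worth double-checking on a low-step example (e.g.\ the Heisenberg-type case of Example \ref{examp.Grushin2}) before committing to the general argument.
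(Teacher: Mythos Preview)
Your proposal is correct and follows essentially the same route as the paper: for (1) both compare the curve $\delta_\lambda(\Psi^{a\cdot X}_t(0))$ with the flow of $\Delta_\lambda(a\cdot X)$ via uniqueness of ODEs; for (3) both use $J_i(a)=\tfrac{\d}{\d t}\big|_{0}(a*te_i)$, the identity \eqref{sec.two_1:eq_idstar}, and the CBH flow formula $\exp(V\diamond W)(x)=\exp(W)(\exp(V)(x))$ to reach $X_i(\pi(a))$; for (2) both obtain surjectivity from ``submersion at $0$ + homogeneity (1)''. The only minor differences are that you justify the polynomial nature of $\pi$ explicitly via the triangular structure of the ODE (the paper just asserts smoothness and computes the Jacobian from a first-order Taylor expansion), and you read off $\d\pi|_0$ from part (3) rather than from Taylor's formula---these are equivalent since $J_i|_0=\partial_{a_i}|_0$, so both give $\mathcal{J}_\pi(0)=(X_1(0)\cdots X_N(0))$.
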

  \begin{proof}
  (1) For every $\lambda > 0$ and every $a \in \RN$, one has
   $$\pi (D_\lambda(a) )
   \stackrel{\eqref{sec.two_1:def_pi}}{=}
   \exp (\Phi  (D_\lambda(a) ) )(0)
   \stackrel{\eqref{sec.two_1:def_Dlambda}}{=}
   \exp (\Delta_\lambda(a\cdot X) )(0),$$
   while $\delta_\lambda (\pi(a) ) = \delta_\lambda (\exp(a\cdot X )(0) )$.
   We then consider the following integral curves:
   $$\gamma(t) := \exp (t\,\Delta_\lambda(a\cdot X) )(0)
   \quad \text{and} \quad \mu(t) :=
   \delta_\lambda (\exp(t\,(a\cdot X) )(0) ), \quad
   \text{for every $t \in \R$}.$$
   One has $\gamma(0) = \mu(0) = 0$.
   Moreover, since $X_j$ is $\delta_\lambda$-homogeneous of degree $s_j$,
   \begin{align*}
    \dot \mu(t) & = \delta_\lambda ( (a\cdot X )
     (\Psi^{a\cdot X}_t(0) ) ) = \sum_{j = 1}^Na_j\delta_\lambda
     (X_j (\Psi^{a\cdot X}_t(0) ) ) \stackrel{\eqref{sec.two_1:charactHomog}}{=}
    \sum_{j = 1}^N\lambda^{s_j}a_j
    X_j (\delta_\lambda (\Psi^{a\cdot X}_t(0) ) )\\
    &= \sum_{j = 1}^N\lambda^{s_j}a_j
    X_j (\mu(t) ) =  (D_\lambda(a)\cdot X )(\mu(t))
    \stackrel{\eqref{sec.two_1:eq_idDlambda}}{=}
    \Delta_\lambda ((a\cdot X) )(\mu(t)).
   \end{align*}
   From the very definition of $\gamma$ we get
   $\dot\gamma(t) = \Delta_\lambda ((a\cdot X) )(\gamma(t)),$
   and this shows that $\gamma$ and $\mu$ solve  the same Cauchy problem, whence they coincide;
   by taking $t = 1$ we get \eqref{sec.two_1:eq_homPi}.\medskip

  (2) Clearly $\pi \in C^\infty(\RN,\R^n)$. Moreover,
   by applying Taylor's formula,
 we get
   $$\pi(a) = (a\cdot X)(0) + \mathcal{O}(\|a\|^2), \quad \text{as $a \to 0$}.$$
   This shows that the Jacobian matrix of $\pi$ at $a = 0$ is given by the matrix
   \begin{equation} \label{sec.two_1:eq_JacPi}
   \mathcal{J}_{\pi}(0) =  (X_1(0)\cdots X_N(0) ),
   \end{equation}
   and thus $\mathrm{rank} (\mathcal{J}_{\pi}(0) ) = n$.
   As a consequence,
   it is possible to find an open neighborhood $W$ of $0 = \pi(0) \in \R^n$
   such that $\pi:\RN\to W$ is surjective.
   We claim that the homogeneity property \eqref{sec.two_1:eq_homPi} implies that $\pi$
   is also onto $\R^n$. Indeed, let $x \in \R^n$ be fixed and let
   $\lambda > 0$ be such that $y = \delta_\lambda(x) \in W$.
   Since $\pi$ is onto $W$, there
   exists a point $a \in \RN$ such that $\pi(a) = y$, and thus
   $$\pi (D_{1/\lambda}(a) )
   \stackrel{\eqref{sec.two_1:eq_homPi}}{=}
   \delta_{1/\lambda} (\pi(a) )
   = \delta_{1/\lambda} (\delta_\lambda(x) ) = x,$$
   proving that $\pi$ is surjective.\medskip

  (3)  Let $i \in \{1,\ldots,N\}$ be fixed and let
  $e_i$ denote the $i$-th vector of the canonical basis of $\RN$. By definition of $J_i$,
   for every $a \in \RN$ we have
 \begin{equation*} \label{sec.two_1:eq_toinsertCH}
   \begin{split}
    \d_a\pi (J_i(a) ) &= J_i(\pi)(a) = \frac{\d}{\d t} \Big|_{t = 0}
   \pi (a*(t\,e_i) ) \stackrel{\eqref{sec.two_1:def_pi}}{=} \frac{\d}{\d t} \Big|_{t = 0} (
   \exp ((a*(t\,e_i))\cdot X )(0) ) \\
   & \!\!\stackrel{\eqref{sec.two_1:eq_idstar}}{=}
   \frac{\d}{\d t} \Big|_{t = 0} (
   \exp ((a \cdot X)\diamond ((t\,e_i)\cdot X) )(0) ) = \frac{\d}{\d t} \Big|_{t = 0} (
   \exp ((a \cdot X)\diamond (t\,X_i) )(0) ).
   \end{split}
   \end{equation*}
   We now recall that the Campbell-Baker-Hausdorff multiplication
   satisfies the formula
   \begin{equation*} \label{sec.two_1:eq_CHThmLiftone}
    \exp(W) (\exp(V)(x) ) = \exp(V\diamond W)(x), \,\,\,\text{for all
    $x \in \RN$ and every $V,W\in\mathfrak{a}$}.
   \end{equation*}
   Therefore, by inserting this in the above computation, we obtain
   \begin{align*}
    \d_a\pi (J_i(a) ) & =
    \frac{\d}{\d t} \Big|_{t = 0} (\exp(t\,X_i) (\exp(a\cdot X)(0) ) ) = X_i (\exp(a\cdot X)(0) )
    \stackrel{\eqref{sec.two_1:def_pi}}{=} X_i (\pi(a) ).
   \end{align*}
   This is precisely the desired \eqref{sec.two_1:eq_liftone},
   and the proof is complete.
  \end{proof}
\section{A change of variable turning $\pi$ into a projection, and allowing saturation}\label{sec:changeofv}
  In order to construct a projection acting as a lifting
  for $X_1,\ldots,X_m$, we add a new feature to Folland's ideas: we
  find an appropriate change of coordinates of the group $\mathbb{A}$
  in the previous section which transforms the vector fields $J_1,\ldots,J_m$ on $\mathbb{A}$ into new vector fields
  $Z_1,\ldots,Z_m$ on $\RN$ lifting $X_1,\ldots,X_m$ via the projection of $\RN$ onto $\R^n$.

  To this end we first observe that, since the vectors
  $X_1(0),\ldots,X_N(0)$ generate the whole of $\R^n$,
  we can find $n$ indexes in $\{1,\ldots,r\}$
  $$1 = i_1 < i_2 < \cdots < i_n,$$
  such that $\mathcal{B} := \{X_{i_1}(0),\ldots,X_{i_n}(0)\}$ is
  a  {basis} of $\R^n$. As a consequence,
  the vector fields
  $X_{i_1},\ldots,X_{i_n}$ must be $\delta_\lambda$-homogeneous of degree
  $\sigma_1,\ldots,\sigma_n$, respectively.
  We then set
\begin{equation}\label{sceltaindexes}
 \{j_1,\ldots,j_p\} := \{1,\ldots,r\}\setminus\{i_1,\ldots,i_n\}
  \qquad  (p = N-n ),
\end{equation}
  and we note that, from H\"ormander's rank condition
  (H2), it follows that
  $j_p \leq r - 1$,
  that is,
   {all the vector fields} in the basis $\mathcal{A}$
  which are
  $\delta_\lambda$-homogeneous of maximum degree $r=\sigma_n$
  contribute to $\mathcal{B}$.
  So far we have assumed that H\"ormander's rank condition holds at $0$ only;
  the last remark shows that it automatically holds at any point of $\R^n$.
\begin{rem}\label{sec.two_1:remHormander}
 With the above notation, we claim that
   \begin{equation} \label{sec.one_1:eq_HormRn}
    \dim \{X_{i_1}(x),\ldots,X_{i_n}(x) \} = n,
    \quad \text{for every $x \in \R^n$}.
   \end{equation}
   In order to see this, let us consider the matrix-valued function $\mathbf{M}$
   defined as follows\footnote{Here $\mathrm{M}_n(\R)$ denotes the set of the real-valued $n\times n$ matrices;
   in the definition of $\mathbf{M}(x)$, $X_{i_1}(x),\ldots, X_{i_n}(x)$ are meant as $n\times 1$ column vectors.}
   $$\mathbf{M}:\R^n \longto \mathrm{M}_n(\R), \quad
   \mathbf{M}(x) := \big(X_{i_1}(x)\cdots X_{i_n}(x)\big).$$
   Since $\{X_{i_1}(0),\ldots,X_{i_n}(0)\}$ is a basis of $\R^n$, the matrix
   $\mathbf{M}(0)$ is non-singular; therefore, it is possible to find
   a small open neighborhood $\mathcal{U}$ of $0$ (in $\R^n$) such that
   $\det(\mathbf{M}(x)) \neq 0$ for every $x \in \mathcal{U}$.
   We now fix a point $x \in \R^n$ and we choose $\lambda > 0$
   such that $\delta_\lambda(x) \in \mathcal{U}$. Then,
   recalling that $X_{i_1},\ldots,X_{i_n}$ are $\delta_\lambda$-homogeneous
   of degrees $\sigma_1,\ldots,\sigma_n$ respectively, we have
   \begin{align*}
    \mathbf{M}\big(\delta_\lambda(x)\big)
    & \,\,\,= \det\Big(X_{i_1}\big(\delta_\lambda(x)\big)\cdots
    X_{i_n}\big(\delta_\lambda(x)\big)\Big) \\
   & \stackrel{\eqref{sec.two_1:charactHomog}}{=}
    \det\Big(\lambda^{-\sigma_1}\,\delta_\lambda\big(X_{i_1}(x)\big)\cdots
    \lambda^{-\sigma_n}\,\delta_\lambda\big(X_{i_n}(x)\big)\Big) \\
    & \,\,\,= \lambda^{-\sigma_1}\cdots\lambda^{-\sigma_n}\,
    \det\Big(\delta_\lambda\big(X_{i_1}(x)\big)\cdots
    \delta_\lambda\big(X_{i_n}(x)\big)\Big),
   \end{align*}
   and thus, since the point $\delta_\lambda(x)$ belongs to $\mathcal{U}$,
   we obtain
   $$\det\Big(\delta_\lambda\big(X_{i_1}(x)\big)\cdots
    \delta_\lambda\big(X_{i_n}(x)\big)\Big) \neq 0.$$
    This ensures that the vectors
    $\delta_\lambda\big(X_{i_1}(x)\big),\ldots,
    \delta_\lambda\big(X_{i_n}(x)\big)$ form a basis of $\R^n$, so that
    the same is true of $X_{i_1}(x),\ldots,X_{i_n}(x)$, since the map
    $\delta_\lambda$ is an isomorphism of $\R^n$.

    As a consequence, we see that $X_1,\ldots,X_m$ satisfy
     H\"ormander's rank condition not only at the origin $0$
    (see assumption (H2)), but at  {every point
    of $\R^n$}, that is,
    $$\dim\big\{X(x) : X \in \mathrm{Lie}\{X_1,\ldots,X_m\}\big\} = n,
    \quad \text{for every $x \in \R^n$}.$$
  \end{rem}
 We are ready to introduce our change of coordinates: we set, with reference to \eqref{sceltaindexes},
  \begin{equation} \label{sec.two_1:def_mapT}
   T: \RN\longto\RN, \quad T(a) := \big(\pi(a),a_{j_1},\ldots,a_{j_p}\big).
  \end{equation}
  We also define a new family $\{d_\lambda\}_{\lambda > 0}$
  of dilations on $\RN$ by setting
  \begin{equation} \label{sec.one_1:def_dilnonordinate}
   d_\lambda: \RN \longto \RN, \quad d_\lambda(a) :=
   (\lambda^{\sigma_1}a_1,\ldots,\lambda^{\sigma_n}a_n, \lambda^{s_{j_1}}a_{n+1},
   \ldots,\lambda^{s_{j_p}}a_N).
  \end{equation}
  We then have the following crucial result.
  \begin{lem}\label{sec.two_1:lem_prpT}
   The map $T$ defined in \eqref{sec.two_1:def_mapT} satisfies
   the following properties:
   \begin{itemize}
    \item[(i)] For every fixed $\lambda > 0$, one has
    \begin{equation} \label{sec.two_1:eq_hompropT}
     T\big(D_\lambda(a)\big) = d_\lambda\big(T(a)\big),
     \quad \text{for every $a \in \RN$};
    \end{equation}
    \item[(ii)] The map $T$ is a $C^\infty$-diffeomorphism
    of $\RN$ onto itself.
   \end{itemize}
  \end{lem}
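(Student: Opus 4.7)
The plan is to verify (i) by a direct coordinate chase and to prove (ii) by combining the inverse function theorem at the origin with a rescaling argument based on (i).

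For (i), I would simply unpack the definitions. The last $p$ components of $T(D_\lambda(a))$ are $\lambda^{s_{j_1}}a_{j_1},\ldots,\lambda^{s_{j_p}}a_{j_p}$, which match the action of $d_\lambda$ on coordinates $n+1,\ldots,N$ of $T(a)$ by \eqref{sec.one_1:def_dilnonordinate}; while the first $n$ components of $T(D_\lambda(a))$ coincide with $\pi(D_\lambda(a))=\delta_\lambda(\pi(a))$ by Theorem \ref{sec.two_1:lempropPi}(1), which is precisely the action of $d_\lambda$ on the first $n$ coordinates of $T(a)$ in view of \eqref{intro.dela}. This is essentially notational.

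For (ii), my approach is to show that $T$ is smooth with everywhere non-singular Jacobian, and then to upgrade the resulting local inverse to a global one via the homogeneity property just proved. Smoothness of $T$ is immediate, since $\pi$ is polynomial by Theorem \ref{sec.two_1:lempropPi}(2) and the remaining components are coordinate projections. For the Jacobian at the origin, I compute directly: the first $n$ rows give $\mathcal{J}_\pi(0)=(X_1(0)\cdots X_N(0))$ from \eqref{sec.two_1:eq_JacPi}, while the last $p$ rows are the dual basis vectors $e_{j_1}^\top,\ldots,e_{j_p}^\top$. After reordering the columns of $\RN$ so that the indices $i_1,\ldots,i_n$ precede $j_1,\ldots,j_p$, the Jacobian becomes block lower triangular with diagonal blocks $(X_{i_1}(0)\cdots X_{i_n}(0))$ and $I_p$; the first block is non-singular by the very choice of $\mathcal{B}$, hence $\det\mathcal{J}_T(0)\neq 0$. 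To transport non-vanishing of $\det\mathcal{J}_T$ to all of $\RN$, I would differentiate (i) to get $\mathcal{J}_T(D_\lambda(a))\,D_\lambda = d_\lambda\,\mathcal{J}_T(a)$ and take determinants; since $\{i_1,\ldots,i_n\}\sqcup\{j_1,\ldots,j_p\}=\{1,\ldots,N\}$ and $s_{i_k}=\sigma_k$, one has $\det D_\lambda=\det d_\lambda$, so $\det\mathcal{J}_T(D_\lambda(a))=\det\mathcal{J}_T(a)$ for every $\lambda>0$. As $\det\mathcal{J}_T$ is polynomial and $D_\lambda(a)\to 0$ when $\lambda\to 0^+$, continuity forces $\det\mathcal{J}_T\equiv\det\mathcal{J}_T(0)\neq 0$.

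The main obstacle is then promoting local invertibility at every point to global bijectivity of $T:\RN\to\RN$; this is where I would crucially invoke (i). Let $U$ be an open neighborhood of $0$ on which $T$ restricts to a diffeomorphism onto an open neighborhood of $0$, supplied by the inverse function theorem. For injectivity, if $T(a)=T(b)$ then applying $d_\lambda$ and using (i) yields $T(D_\lambda(a))=T(D_\lambda(b))$; for $\lambda>0$ small enough both $D_\lambda(a)$ and $D_\lambda(b)$ lie in $U$, forcing $D_\lambda(a)=D_\lambda(b)$ and hence $a=b$. For surjectivity, given $y\in\RN$ pick $\lambda>0$ so small that $d_\lambda(y)\in T(U)$, write $d_\lambda(y)=T(a')$ with $a'\in U$, and set $a:=D_{1/\lambda}(a')$; then (i) gives $T(a)=d_{1/\lambda}(T(a'))=d_{1/\lambda}(d_\lambda(y))=y$. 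Together with the global invertibility of $\mathcal{J}_T$, this proves that $T$ is a $C^\infty$-diffeomorphism of $\RN$ onto itself.
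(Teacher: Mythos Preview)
Your proof is correct and follows essentially the same strategy as the paper: part (i) is the same direct computation, and for (ii) both you and the paper compute $\mathcal{J}_T(0)$ via \eqref{sec.two_1:eq_JacPi}, invoke the inverse function theorem locally, and then use the homogeneity (i) to scale the local injectivity and surjectivity to global bijectivity.

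The one genuine difference is in how smoothness of $T^{-1}$ is obtained. You show that $\det\mathcal{J}_T$ is constant on $\RN$ by differentiating (i), using $\det D_\lambda=\det d_\lambda$ (which follows from $s_{i_k}=\sigma_k$ and $\{i_1,\ldots,i_n\}\sqcup\{j_1,\ldots,j_p\}=\{1,\ldots,N\}$), and letting $\lambda\to 0^+$; then bijectivity plus a nowhere-vanishing Jacobian immediately give that $T^{-1}$ is smooth. The paper does not compute the Jacobian away from $0$: instead it derives the companion identity $T^{-1}(d_\lambda(u))=D_\lambda(T^{-1}(u))$ and uses it to write $T^{-1}$ near any point as $D_{1/\lambda}\circ (T^{-1}|_{\mathcal{W}})\circ d_\lambda$ for suitable $\lambda$, a composition of smooth maps. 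Your route is a bit slicker and yields the extra information that $\det\mathcal{J}_T$ is a nonzero constant; the paper's route avoids the determinant computation altogether. Both are short and rely on the same underlying homogeneity.
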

  \begin{proof}
   (i): For every $\lambda > 0$ and every  $a \in \RN$ we have
  \begin{equation*}
  \begin{split}
   T\big(D_\lambda(a)\big) & \stackrel{\eqref{sec.two_1:def_mapT}}{=}
   \Big(\pi\big(D_\lambda(a)\big), \big(D_\lambda(a)\big)_{j_1},\ldots,
   \big(D_\lambda(a)\big)_{j_p}\Big) \\
   & \,\,\,= \Big(\delta_\lambda\big(\pi(a)\big),
   \lambda^{s_{j_1}}a_{j_1}
   \ldots,\lambda^{s_{j_p}}a_{j_p}\Big)
   \stackrel{\eqref{sec.one_1:def_dilnonordinate}}{=}
   d_\lambda\big(\pi(a),a_{j_1},\ldots,a_{j_p}\big)= d_\lambda\big(T(a)\big),
   \end{split}
  \end{equation*}
  which is precisely the desired identity \eqref{sec.two_1:eq_hompropT}.

  (ii): Obviously  $T \in C^\infty(\RN,\RN)$. Moreover,
  $$\mathcal{J}_T(0) = \begin{pmatrix}
  \mathcal{J}_\pi(0) \\
  e_{j_1} \\
  \vdots \\
  e_{j_p}
  \end{pmatrix} \stackrel{\eqref{sec.two_1:eq_JacPi}}{=}
   \begin{pmatrix}
    X_1(0) \cdots X_N(0)
    \\
  e_{j_1} \\
  \vdots \\
  e_{j_p}
   \end{pmatrix},$$
   where $e_{j_1},\ldots,e_{j_p}$ denote some of the vectors (written as row $1\times N$ vectors) of the canonical basis of $\RN$.
   From this, by recalling that $X_{i_1}(0),\ldots,X_{i_n}(0)$ form a basis
   of $\R^n$ and by \eqref{sceltaindexes}, we derive
   that $\mathcal{J}_T(0)$ is  {invertible}, so that
   there exist neighborhoods $\mathcal{U},\mathcal{W}$ of $0$ in $\RN$ such that
   $$T|_{\mathcal{U}}: \mathcal{U} \longrightarrow \mathcal{W},
   \quad \text{is a $C^{\infty}$-diffeomorphism}.$$
   We now claim that the homogeneity property
   (i) implies that the map
   $T$ is actually a $C^\infty$-dif\-feo\-mor\-phism
   of $\RN$ onto itself.
   To prove this claim, we first show that $T$ is a bijection. \medskip

   \emph{$T$ is 1-1:}\,\,Suppose that $a,b \in \RN$ are
   such that $T(a) = T(b)$, and let $\lambda > 0$ be so small that
   $D_\lambda(a),D_\lambda(b) \in \mathcal{U}.$
   This gives
   $$T (D_\lambda(a) )
   \stackrel{\eqref{sec.two_1:eq_hompropT}}{=} d_\lambda (T(a) ) =
   d_\lambda (T(b) )
   \stackrel{\eqref{sec.two_1:eq_hompropT}}{=}
   T (D_\lambda(b) ),$$
   and thus, since $D_\lambda(a),D_\lambda(b) \in \mathcal{U}$ and $T|_{\mathcal{U}}$
   is injective, we get $D_\lambda(a) = D_\lambda(b)$, hence
   $a = b$.\medskip

   \emph{$T$ is onto:}\,\, Let $u \in \RN$ be
   fixed and let $\lambda > 0$ be such that $v = d_\lambda(u) \in \mathcal{W}$.
   Since $T|_{\mathcal{U}}$ is onto $\mathcal{W}$, it is
   possible to find a (unique)
   point $a \in \mathcal{U}$ such that $T(a) = d_\lambda(u)$, and thus
   $$T (D_{1/\lambda}(a) ) \stackrel{\eqref{sec.two_1:eq_hompropT}}{=}
   d_{1/\lambda} (d_\lambda(u) ) = u.$$
   This proves that $T$ is surjective. \medskip

   In order to end the proof, we are left show that
   the map $T^{-1}$ (which is globally defined) is smooth.
   To this end we first notice that, from the homogeneity property
   (i) of $T$, we get
   \begin{equation} \label{sec.two_1:eq_hompropinvT}
    T^{-1} (d_\lambda(u) ) = D_\lambda (T^{-1}(u) ),
    \quad \text{for every $u \in \RN$}.
   \end{equation}
   Let now $u_0 \in \RN$ and $\lambda > 0$
   be such that $d_\lambda(u_0)\in\mathcal{W}$. The map $d_\lambda$ being continuous,
   it is possible to find a positive $\rho > 0$ such that
   $d_\lambda (B(u_0,\rho) )\subseteq\mathcal{W};$
   thus, for every $u \in B(u_0,\rho)$,
   \begin{align*}
    T^{-1}(u) & = T^{-1} (d_{1/\lambda} (d_\lambda(u) ) )
     \stackrel{\eqref{sec.two_1:eq_hompropinvT}}{=}
     D_{1/\lambda} (T^{-1} (d_\lambda(u) ) ) \\
     &=  (D_{1/\lambda}\circ  (T^{-1} ) |_{\mathcal{W}}\circ d_\lambda )(u).
    \end{align*}
    This shows that $T^{-1}$  {coincides} with the smooth function
    $D_{1/\lambda}\circ  (T^{-1} )\big|_{\mathcal{W}}\circ d_\lambda$
    on the open ball $B(u_0,\rho)$, hence $T^{-1}$ is smooth near $u_0$.
    The arbitrariness of $u_0$ completes the proof.
  \end{proof}
  Thanks to Lem.\,\ref{sec.two_1:lem_prpT} we are entitled to use the change of variable
  $T$ in order to define a new homogeneous Carnot group $\G = (\RN,\star,D^\star_\lambda)$
  starting from $\mathbb{A} = (\RN,*,D_\lambda)$.

  We henceforth denote the points of $\RN = \R^n\times\R^p$
  by $(x,\xi)$, with $x\in\R^n$ and $\xi \in \R^p$; we set
  \begin{align}
   (x,\xi)&\star(y,\eta) :=
   T (T^{-1}(x,\xi)*T^{-1}(y,\eta) ),
   \quad \text{for every $(x,\xi),(y,\eta)\in\RN$}; \label{sec.two_1:eq_defoperstarG}\\
   & D^\star_\lambda: \RN\longto\RN, \quad
   D^\star_\lambda(x,\xi) := T (D_\lambda (T^{-1}(x,\xi) ) ).
   \label{sec.two_1:eq_defDstarG}
  \end{align}
   It is obvious that $\mathbb{G} = (\RN,\star,D^\star_\lambda)$ is a homogeneous
   Carnot group on $\RN$, with $m$ generators and nilpotent of
   step $r$. Furthermore, $T$ is an isomorphism between $\mathbb{A}$ and $\mathbb{G}$, that is,
   $$T(a)\star T(b) = T(a*b), \quad \text{for every $a,b \in \RN$}.$$
  We also have, for every $\lambda > 0$,
\begin{equation*}
  D^\star_\lambda (x,\xi) =
  T (D_\lambda (T^{-1}(x,\xi) ) )
  \stackrel{\eqref{sec.two_1:eq_hompropinvT}}{=}
  T (T^{-1}(d_\lambda(x,\xi) ) \\
   = d_\lambda(x,\xi).
  \end{equation*}
 There is therefore no reason to use the notation $D^\star_\lambda$
 any longer, and we replace it by $d_\lambda$. In the new coordinates $(x,\xi)$
 it is useful to write $d_\lambda(x,\xi)$ as the product $(\dela(x),\delta^*_\lambda(\xi))$, where
\begin{equation}\label{delaprodottosa}
  \delta^*_\lambda(\xi)=(\lambda^{\sigma^*_1}\xi_1,\ldots,\lambda^{\sigma^*_p}\xi_p),\quad\text{where
 $\sigma^*_i:=s_{j_i}$ for any $i=1,\ldots,p$}.
\end{equation}
  Now, since $T$ is an isomorphism of Lie groups, it induces
  the Lie algebra isomorphism $ \d T $
\begin{equation} \label{sec.two_1:eq_PsiTisom}
   \d T : \mathrm{Lie}(\mathbb{A}) \longto \mathrm{Lie}(\G),
  \quad  \d T (X)_{(x,\xi)} := \d T (X)_{T^{-1}(x,\xi)}.
\end{equation}
  We can then consider, in particular, the vector fields
  \begin{equation} \label{sec.two_1:eq_defZi}
   Z_i :=  \d T (J_i), \quad \text{for every $i = 1,\ldots,N$}.
  \end{equation}
  The map $ \d T $ being an  {isomorphism} of Lie algebras, we immediately
  infer that
  \begin{itemize}
    \item the set $\{Z_1,\ldots,Z_N\}$ is a basis of $\mathrm{Lie}(\G)$;

    \item $\mathrm{Lie}(\G) = \mathrm{Lie}\{Z_1,\ldots,Z_m\}.$
  \end{itemize}
 We can finally prove the following result.
  \begin{thm} \label{sec.two_1:thm_glifta}
   Let $Z_1,\ldots,Z_N$ be as in \eqref{sec.two_1:eq_defZi}. Then
   \begin{enumerate}
    \item[(i)] $Z_1,\ldots,Z_N$ are $d_\lambda$-homogeneous
    of degree $s_1,\ldots,s_N$ respectively
    (see Remark \ref{sec.two_1:rem_idstarD});
    \item[(ii)] $Z_i$ is a lifting
    of $X_i$, that is,
\begin{equation} \label{sec.two_1:eq_ZliftX}
     Z_i = X_i + R_i \quad (i = 1,\ldots,N),
\end{equation}
    where $R_i$ is a vector field on $\RN$ only operating in the $\xi$ variables (with coefficients possibly depending on $(x,\xi)$).
 As a consequence, the sub-Laplacian
     $\Lop_\G := \sum_{k = 1}^mZ_k^2$ on $\G$
     is a lifting of the operator $\LL= \sum_{k = 1}^m X_k^2$.
   \end{enumerate}
  \end{thm}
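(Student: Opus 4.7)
The plan is to exploit the fact that $T$, by Lemma \ref{sec.two_1:lem_prpT}, is simultaneously a Lie group isomorphism $\mathbb{A}\to\G$ and a conjugation of the dilations $D_\lambda$ on $\mathbb{A}$ with $d_\lambda$ on $\G$, so that every $D_\lambda$-intertwining or $\pi$-relating property of the $J_i$'s transports to the analogous property of the $Z_i:=\d T(J_i)$'s after a straightforward chase of definitions.

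For part (i), I would first recall that on the homogeneous Carnot group $\mathbb{A}=(\R^N,*,D_\lambda)$ the left-invariant vector field $J_i$, which agrees at the origin with $\de_{a_i}$, is $D_\lambda$-homogeneous of degree $s_i$; this is the standard $D_\lambda(J_i(a))=\lambda^{s_i}\,J_i(D_\lambda(a))$. The identity $d_\lambda=T\circ D_\lambda\circ T^{-1}$ from \eqref{sec.two_1:eq_defDstarG}, differentiated and applied to $Z_i|_{(x,\xi)}=\d T|_{T^{-1}(x,\xi)}(J_i|_{T^{-1}(x,\xi)})$, gives
\[
\d(d_\lambda)|_{(x,\xi)}Z_i|_{(x,\xi)}=\d T|_{D_\lambda(T^{-1}(x,\xi))}\,\d D_\lambda(J_i|_{T^{-1}(x,\xi)})=\lambda^{s_i}\,Z_i|_{d_\lambda(x,\xi)},
\]
so $Z_i$ is $d_\lambda$-homogeneous of degree $s_i$.

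For part (ii), the decisive observation is that the change of variable $T(a)=(\pi(a),a_{j_1},\ldots,a_{j_p})$ has been designed exactly so that, read in the new coordinates $(x,\xi)=T(a)$, the map $\pi$ becomes the canonical projection $\tilde\pi(x,\xi)=x$; indeed $\pi=\tilde\pi\circ T$. Folland's $\pi$-relation $\d\pi(J_i)=X_i\circ\pi$ from Theorem \ref{sec.two_1:lempropPi}(3) then translates, via $\d\tilde\pi_{(x,\xi)}=\d\pi_{T^{-1}(x,\xi)}\circ \d(T^{-1})_{(x,\xi)}$, into
\[
\d\tilde\pi(Z_i)|_{(x,\xi)}=\d\pi(J_i)|_{T^{-1}(x,\xi)}=X_i(x).
\]
Since $\d\tilde\pi$ is simply the projection on the first $n$ components of a tangent vector, writing $Z_i=\sum_{k=1}^n a_k(x,\xi)\,\de_{x_k}+\sum_{\ell=1}^p b_\ell(x,\xi)\,\de_{\xi_\ell}$ we conclude $a_k(x,\xi)\equiv (X_i)_k(x)$, which is precisely \eqref{sec.two_1:eq_ZliftX} with $R_i:=\sum_\ell b_\ell(x,\xi)\,\de_{\xi_\ell}$ operating only in $\xi$. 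The consequence $\Lop_\G f=\Lop f$ for $f=f(x)$ is then immediate: $R_i f=0$ and $X_i f$ still depends only on $x$, so $Z_i^2 f=X_i^2 f+R_i(X_i f)=X_i^2 f$.

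There is, honestly, no substantial obstacle once Lemma \ref{sec.two_1:lem_prpT} and Theorem \ref{sec.two_1:lempropPi} are in hand; the entire argument is a diagram chase. The one point requiring a touch of care is making sure one never confuses the ``old'' $\pi:\mathbb{A}\to\R^n$ with the ``new'' projection $\tilde\pi$ on $\G$, and correctly uses $\pi=\tilde\pi\circ T$ when pulling back Folland's identity. Once that bookkeeping is done cleanly, both (i) and (ii) follow.
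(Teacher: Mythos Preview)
Your proof is correct and follows essentially the same approach as the paper's own proof: both parts are obtained by transporting the known properties of the $J_i$'s through the diffeomorphism $T$ via the identities $d_\lambda=T\circ D_\lambda\circ T^{-1}$ and $\pi=\tilde\pi\circ T$. The only difference is one of style---the paper carries out the computation by letting $J_i$ act on the component functions of $T$, whereas you phrase the same chain rule in the language of pushforwards; in substance the arguments coincide.
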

 Theorem \ref{sec.two_1:thm_glifta} proves Theorem
  \ref{sec.two_1:thmLiftMain}.
  \begin{proof}
  (i) We fix $i \in \{1,\ldots,N\}$ and $\lambda > 0$.
  We recall that
  \begin{equation}\label{delaJJJJ}
     \text{$J_i$ is $D_\lambda$-homogeneous of degree $s_i$.}
  \end{equation}
   For every $(x,\xi) \in \RN = \R^n\times\R^p$, we have the following computation
   \begin{align*}
    Z_i (d_\lambda(x,\xi) ) &
    \stackrel{\eqref{sec.two_1:eq_defZi}}{=}
     \d T (J_i) (d_\lambda(x,\xi) )
    \stackrel{\eqref{sec.two_1:eq_PsiTisom}}{=}
    J_i(T) (T^{-1} (d_\lambda(x,\xi) ) )
    \\
    & \stackrel{\eqref{sec.two_1:eq_defDstarG}}{=}
    J_i(T) (D_\lambda (T^{-1}(x,\xi) ) )
    \stackrel{\eqref{delaJJJJ}}{=}
   \lambda^{-s_i}\,J_i (T\circ D_\lambda ) (T^{-1}(x,\xi) )\\
   &\,\stackrel{\eqref{sec.two_1:eq_hompropT}}{=}
   \lambda^{-s_i}\,J_i (d_\lambda\circ T ) (T^{-1}(x,\xi) ) \\
   &
   \stackrel{\eqref{sec.two_1:eq_PsiTisom}}{=}
   \lambda^{-s_i}\, \d T (J_i) (d_\lambda )(x,\xi)
   =  \lambda^{-s_i}\,d_\lambda ( \d T (J_i)(x,\xi) )
   = d_\lambda (Z_i(x,\xi) ),
   \end{align*}
   and this proves that $Z_i$ is $d_\lambda$-homogeneous
   of degree $s_i$, as claimed.\medskip

  (ii) We fix $i \in \{1,\ldots,N\}$ and $(x,\xi) \in \RN$; we have
   \begin{equation} \label{sec.one_1:eq_Zitosub1}
    \begin{split}
    Z_i(x,\xi) & \,\,\,=  \d T (J_i)(x,\xi) = J_i(T) (T^{-1}(x,\xi) )
    \\
    &
    \stackrel{\eqref{sec.two_1:def_mapT}}{=}
     \Big(J_i(\pi),J_i(a\mapsto a_{j_1}),\ldots,J_i(a\mapsto a_{j_p}) \Big)
     (T^{-1}(x,\xi) ).
    \end{split}
   \end{equation}
   On the other hand, by \eqref{sec.two_1:eq_liftone} we infer
   \begin{equation} \label{sec.one_1:eq_Zitosub2}
    J_i(\pi) (T^{-1}(x,\xi))=
    X_i (\pi (T^{-1}(x,\xi) ) ).
   \end{equation}
   Now, since $T(x,\xi)=(\pi(x),\xi)$, we derive that
   \begin{equation} \label{sec.one_1:eq_Zitosub3}
   \pi(T^{-1}(x,\xi) )= x;
   \end{equation}
   therefore, by
   inserting \eqref{sec.one_1:eq_Zitosub2}
   and \eqref{sec.one_1:eq_Zitosub3} in
   \eqref{sec.one_1:eq_Zitosub1}, we infer
   $$Z_i(x,\xi)
   =  \Big(X_i(x,\xi), f_{i,1}(x,\xi),
   \ldots,f_{i,p}(x,\xi) \Big),$$
   where, for $k = 1,\ldots,p$ we have used the notation
   $$f_{i,k}(x,\xi) =
   J_i(a\mapsto a_{j_k}) (T^{-1}(x,\xi) )
   =  (J_i (T^{-1}(x,\xi) ) )_{j_k}.$$
   This shows that the vector field $Z_i$ can be written
   (as a vector field on $\RN$) in the form
   $$Z_i = X_i + R_i, \quad \text{with \,\,\,
   $R_i = \sum_{k = 1}^p f_{i,k}(x,\xi)\,\frac{\de}{\de {\xi_k}}$},$$
   hence $Z_i$ is a lifting for $X_i$ and this ends the proof.
  \end{proof}
 All the algebraic machinery of this section is motivated by the following central result.
  \begin{thm}\label{sec.two_1:thm_regliftLopX}
   The sub-Laplacian $\Lop_\G = \sum_{k = 1}^mZ_k^2$ on the homogeneous Carnot group
   $\G = (\R^n\times\R^p,\star,d_\lambda)$
   is  {a saturable Lifting} of $\LL=\sum_{k = 1}^m X_k^2$, in the sense of
   Definition \ref{defi.satLift}.
  \end{thm}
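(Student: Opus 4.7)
The plan is to verify the two conditions (S.1) and (S.2) of Definition \ref{defi.satLift} for the remainder $R:=\Lop_\G-\LL$, relying on the self-adjointness of both operators and on the $d_\lambda$-homogeneous structure. By Theorem \ref{sec.two_1:thm_glifta} we have $Z_i=X_i+R_i$ with $R_i$ differentiating only in $\xi$, so expanding
\begin{equation*}
 \Lop_\G=\sum_{i=1}^m(X_i+R_i)^2=\LL+\sum_{i=1}^m\bigl(X_iR_i+R_iX_i+R_i^2\bigr)
\end{equation*}
shows that every summand of $R$ contains at least one $\xi$-derivative; that is, $R$ has the form \eqref{sec.one:rem_eqformP} with $\beta\neq 0$ in every term.

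For (S.1) I would argue that $R^*=R$, so that $R^*$ automatically inherits the structural form \eqref{sec.one:rem_eqformPstar}. Self-adjointness of $\LL$ follows from Remark \ref{sec.two_1:remHomX}: the coefficient $a_{k,j}$ of $\de_{x_k}$ in $X_j$ is independent of $x_k,\ldots,x_n$, so each $X_j$ has vanishing divergence, $X_j^*=-X_j$, and $\LL=\sum X_j^2$ is formally self-adjoint. Self-adjointness of $\Lop_\G$ follows from the fact that each $Z_i$ is a left-invariant vector field on the Carnot group $\G$ and that the Lebesgue measure is a bi-invariant Haar measure on $\G$, giving $Z_i^*=-Z_i$. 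Taking adjoints in $\Lop_\G=\LL+R$ then yields $R^*=\Lop_\G-\LL=R$, and condition (S.1) is proved.

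For (S.2) I would take $\theta_j$ to be the $\delta^*_\lambda$-rescaling of a fixed cutoff: pick $\theta\in C_0^\infty(\R^p,[0,1])$ with $\theta\equiv 1$ in a neighborhood of the origin, and set $\theta_j(\xi):=\theta(\delta^*_{1/j}(\xi))$, so that $\{\theta_j=1\}\uparrow\R^p$. The chain rule for $\delta^*_{1/j}$ gives
\begin{equation*}
 D_\xi^\beta\theta_j(\xi)=j^{-|\beta|_{*}}\,(D^\beta\theta)\bigl(\delta^*_{1/j}(\xi)\bigr),
 \qquad |\beta|_{*}:=\textstyle\sum_{l=1}^p\sigma^*_l\beta_l,
\end{equation*}
which is supported on the set where $|\xi_l|\leq C\,j^{\sigma^*_l}$ for every $l$. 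Since $R=R^*$ is $d_\lambda$-homogeneous of degree $2$ (as both $\LL$ and $\Lop_\G$ are), its coefficient $r^*_{\alpha,\beta}$ is a $d_\lambda$-homogeneous polynomial of degree $2-|\alpha|_{\sigma}-|\beta|_{*}$ (with $|\alpha|_{\sigma}:=\sum_k\sigma_k\alpha_k$), so each monomial $x^A\xi^B$ appearing in it satisfies $|A|_{\sigma}+|B|_{*}=2-|\alpha|_{\sigma}-|\beta|_{*}$ and is bounded on $K\times\mathrm{supp}(D_\xi^\beta\theta_j)$ (with $K\subset\R^n$ compact) by $C_K\,j^{|B|_{*}}\leq C_K\,j^{2-|\alpha|_{\sigma}-|\beta|_{*}}$. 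Combining,
\begin{equation*}
 \bigl|r^*_{\alpha,\beta}(x,\xi)\,D_\xi^\beta\theta_j(\xi)\bigr|\leq C_K\,j^{2-|\alpha|_{\sigma}-2|\beta|_{*}},
\end{equation*}
and the exponent is nonpositive because $\beta\neq 0$ forces $|\beta|_{*}\geq 1$. This produces the uniform bound \eqref{sec.one:eq_mainassumptiontheta}.

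The main obstacle I anticipate is condition (S.1): a direct computation of $R^*$ by integration by parts could in principle produce terms in which all the $\xi$-derivatives are absorbed by the $\xi$-dependent coefficients of $R$, leaving the test function undifferentiated in $\xi$ and violating (S.1). It is precisely the combined self-adjointness of $\LL$ (from the divergence-free property of $\dela$-homogeneous vector fields) and of $\Lop_\G$ (from bi-invariance of Haar measure on Carnot groups) that rules out such unwanted terms in $R^*$.
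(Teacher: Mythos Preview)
Your argument for (S.1) is fine and matches the paper's: both $\LL$ and $\Lop_\G$ are formally self-adjoint (by divergence-freeness of $\dela$-homogeneous degree-$1$ vector fields, or equivalently by bi-invariance of Haar measure), so $R^*=R$ inherits the form \eqref{sec.one:rem_eqformPstar}.

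Your argument for (S.2), however, contains a genuine error. The $d_\lambda$-homogeneous degree of the coefficient $r_{\alpha,\beta}$ is $|\alpha|_\sigma+|\beta|_*-2$, not $2-|\alpha|_\sigma-|\beta|_*$ as you wrote. (Check the mixed term $2x_1\,\de_{x_2}\de_\xi$ in Example~\ref{examp.Grushin}: there $|\alpha|_\sigma=\sigma_2=2$, $|\beta|_*=1$, and the coefficient $2x_1$ has degree $1=2+1-2$, not $-1$.) With the correct degree, your chain of inequalities gives only
\[
\bigl|r_{\alpha,\beta}(x,\xi)\,D_\xi^\beta\theta_j(\xi)\bigr|\leq C_K\,j^{(|\alpha|_\sigma+|\beta|_*-2)-|\beta|_*}=C_K\,j^{|\alpha|_\sigma-2},
\]
and this exponent can be strictly positive: in Example~\ref{sec.th:ex_Engel} the term $2x_1^2\,\de_{x_3}\de_\xi$ has $|\alpha|_\sigma=\sigma_3=3$, so your bound blows up like $j$. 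In other words, total $d_\lambda$-homogeneity of $R$ alone is not enough to control the $\xi$-growth of the coefficients against the decay of $D_\xi^\beta\theta_j$.

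What is missing is the sharper structural fact used in the paper: writing $r_{\alpha,\beta}(x,\xi)=\sum_\gamma c_{\alpha,\beta,\gamma}(x)\,\xi^\gamma$, every monomial actually satisfies $|\gamma|_*\leq|\beta|_*-1$ (not merely $|\gamma|_*\leq|\alpha|_\sigma+|\beta|_*-2$). This follows by expanding $R=\sum_i(X_iR_i+R_iX_i+R_i^2)$ and using that the coefficients of $X_i$ are \emph{independent of $\xi$} while the coefficient $f_{i,k}$ of $\de_{\xi_k}$ in $R_i$ has total degree $\sigma^*_k-1$; hence in every term the $\xi$-degree of the coefficient is strictly smaller than the $\delta^*_\lambda$-weight of the $\xi$-derivatives it multiplies. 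With this bound the final exponent becomes $(|\beta|_*-1)-|\beta|_*=-1$, uniformly negative, and \eqref{sec.one:eq_mainassumptiontheta} follows. You should replace your pure-homogeneity count by this structural observation.
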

  \begin{proof}
   With reference to Definition \ref{defi.satLift}, we need
   to prove properties (S.1) and (S.2).\medskip

   (S.1) Since $Z_1,\ldots,Z_m$ are
   $d_\lambda$-homo\-ge\-neous of degree $1$, the operator
   $\Lop_\G$ is (formally) {self-adjoint} on $L^2(\RN)$.
   The same is true of $\LL $, this time invoking the
    $\dela$-homo\-ge\-neity of degree $1$ of $X_1,\ldots,X_m$.
   Thus the formal adjoint $R^*$ of $R= \Lop_\G-\LL $  {coincides} with $R$
   so that ($\Lop_\G$ being a lifting for $\LL $)
   it has the form \eqref{sec.one:rem_eqformPstar}.\medskip

   (S.2) With reference to the
   dilations $\delta^*_\lambda$  in \eqref{delaprodottosa},
   we consider $\delta^*_\lambda$-homogeneous map
\begin{equation} \label{sec.two_1:eq_defnormN}
   N:\R^p\longto\R, \quad N(\xi) := \sum_{k = 1}^p|\xi_k|^{1/\sigma^*_k}.
\end{equation}
   We now choose a smooth function $\theta \in C_0^\infty(\R^p,[0,1])$
   such that
   \begin{itemize}
     \item $\mathrm{supp}(\theta) \subseteq \{\xi \in \R^p: N(\xi) \leq 2\}$;
     \item $\theta \equiv 1$ on $\{\xi\in\R^p: N(\xi) < 1\}$.
   \end{itemize}
   We then define a sequence $\theta_j$ in $C^\infty_0(\R^p)$ by setting, for any $\xi\in \R^p$ and any $j\in \mathbb{N}$,
   \begin{equation} \label{sec.one_1:eq_defthetaj}
  \theta_j(\xi) := \theta (\delta^*_{2^{-j}}(\xi) ).
   \end{equation}
   Obviously, any $\theta_j$ is valued in $[0,1]$; furthermore, since
   $N$ is $\delta^*_\lambda $-homogeneous of degree $1$,
 \begin{itemize}
   \item $\mathrm{supp}(\theta_j) \subseteq \{\xi \in \R^p: N(\xi) \leq 2^{j+1}\}$;
   \item $\theta \equiv 1$ on $\{\xi\in\R^p: N(\xi) < 2^{j}\}$.
 \end{itemize}
 Consequently  $\{\theta_j=1\}\uparrow \R^p$ as $j\uparrow \infty$.
 In order to complete the verification of (S.2),
 let us fix a compact set $K\subseteq \R^n$ and
 let $r_{\alpha,\beta}$
  be the coordinate coefficient function of the
   PDO
   $$R^* = R = \Lop_\G-\LL  = \sum_{k = 1}^m(Z_k^2-X_k^2)
   = \sum_{\alpha, \beta}r_{\alpha,\beta}(x,\xi)\,D^\alpha_xD^\beta_\xi.$$
   The functions  $r_{\alpha,\beta}$ are polynomials;
   a simple but tedious computation shows that any monomial decomposing $r_{\alpha,\beta}(x,\xi)$,
   has the following feature: as a function of $\xi$ only it is
   $\delta^*_\lambda $-homogeneous of degree  not exceeding $|\beta|_{*}-1$, where we have used
   the notation (see also \eqref{delaprodottosa})
   $$|\beta|_{*} := \sum_{k = 1}^p\beta_k\,\sigma^*_k,
   \quad \text{for every multi-index $\beta\in(\mathbb{N}\cup\{0\})^p$}.$$
   With this notation, note that, for any $\xi\in\R^p$ and any multi-index $\beta$,
\begin{equation}\label{valoreassdelastar}
   \big(\delta^*_\lambda(\xi)\big)^\beta=\lambda^{|\beta|_*}\,\xi^\beta.
\end{equation}
    We can write $r_{\alpha,\beta}$ in the following way
   \begin{equation} \label{sec.one_1:eq_mainrepra}
    r_{\alpha,\beta}(x,\xi) = \sum_{|\gamma|_{*}\leq|\beta|_{*}-1}
    c_{\alpha,\beta,\gamma}(x)\,\xi^\gamma,
   \end{equation}
   where $c_{\alpha,\beta,\gamma}(x)$ are polynomial functions only depending on $x$.

   Now, for every multi-index $\gamma$ with
   $|\gamma|_{*} \leq |\beta|_{*} - 1$, every
   $(x,\xi)\in K\times \R^p$ and every $j \in \mathbb{N}$,
   we have the estimate (we use the notation $\chi_B$
   for the characteristic function of a set $B$):
   \begin{align}
    &\notag
    \Big|c_{\alpha,\beta,\gamma}(x)\,\xi^\gamma \,D^\beta_\xi\theta_j(\xi) \Big|
    \leq  \max_{x\in K} |c_{\alpha,\beta,\gamma}(x) | \cdot |\xi^\gamma |
    \cdot |D^\beta_\xi\theta_j(\xi) | \\
    &\notag
    (\text{recall that $\theta_j$ is  {constant} outside the set
    $B_j := \{2^{j} \leq
    N(\xi) \leq 2^{j+1}\}$} ) \\
    &\notag
     \,\,\,=  \max_{K} |c_{\alpha,\beta,\gamma} | \cdot |\xi^\gamma |
    \cdot |D^\beta_\xi\theta_j(\xi) |\cdot
    \chi_{B_j}(\xi) \\
    &\notag
     \stackrel{\eqref{sec.one_1:eq_defthetaj}}{\leq}
     \max_{K} |c_{\alpha,\beta,\gamma} |\cdot
    \sup_{\R^p} |D^\beta\theta | \cdot (2^{-j} )^{|\beta|_{*}}
    \cdot |\xi^\gamma |\cdot\chi_{B_j}(\xi) \\
    &\notag
    (\text{we denote by $\mathbf{c}_{\alpha,\beta,\gamma}$ a constant bounding the product of the first two factors,}\\
    &\notag
    \text{we write $\xi=\delta^*_{2^j}\circ \delta^*_{2^{-j}}(\xi)$ and we use \eqref{valoreassdelastar})}\\
        &\label{bellastima}
         \,\,\,\leq \mathbf{c}_{\alpha,\beta,\gamma}\cdot (2^{-j} )^{|\beta|_{*}
    -|\gamma|_{*}}
    \cdot \Big|(\delta^*_{2^{-j}}(\xi))^\gamma \Big|\cdot \chi_{B_j}(\xi).
 \end{align}
 Observe that, if the point $\xi$ belongs to the annulus
 $B_j =  \{2^{j} \leq
    N(\xi) \leq 2^{j+1}\}$, then
    the point $\delta^*_{2^{-j}}(\xi)$ belongs to the compact set
    $B_1 = \{\xi \in \R^p: 1 \leq N(\xi) \leq 2\};$
    as a consequence, there exists a constant $M_\gamma > 0$,
     {only depending on $\gamma$ but independent on $j \in \mathbb{N}$}, such that
    \begin{equation} \label{sec.two_1:eq_mainestimbis}
      |\delta^*_{2^{-j}}(\xi)^\gamma |\,
    \chi_{B_j}(\xi) \leq M_\gamma, \quad \text{for every $\xi \in \R^p$}.
    \end{equation}
    Since $|\beta|_{*}-|\gamma|_{*}\geq 1$, from \eqref{bellastima} and \eqref{sec.two_1:eq_mainestimbis},
    we then obtain
    \begin{equation} \label{sec.two_1:eq_mainestimfinal}
      |c_{\alpha,\beta,\gamma}(x)\,\xi^\gamma\,D^\beta_\xi\theta_j(\xi) |
     \leq \mathbf{c}_{\alpha,\beta,\gamma}\,M_\gamma,
     \quad \text{for every $x\in K$, $\xi\in \R^p$ and $j \in \mathbb{N}$}.
    \end{equation}
    We are now ready to conclude: by taking into account \eqref{sec.one_1:eq_mainrepra},
    for every for every $x\in K$, $\xi\in \R^p$ and $j \in \mathbb{N}$ we have
 \begin{align*}
      |r_{\alpha,\beta}(x,\xi)&\cdot D^\beta_\xi
   \theta_j(\xi) |
   \leq \sum_{|\gamma|_{*}\leq|\beta|_{*}-1}
    |c_{\alpha,\beta,\gamma}(x)\,\xi^\gamma\,D^\beta_\xi\theta_j(\xi) |
    \stackrel{\eqref{sec.two_1:eq_mainestimfinal}}{\leq}
   \sum_{|\gamma|_{*}\leq|\beta|_{*}-1}\mathbf{c}_{\alpha,\beta,\gamma}\,M_\gamma,
    \end{align*}
    and this completes the verification of property (S.2) of a saturable Lifting.
  \end{proof}
 \section{Fundamental solution  for homogeneous second-order sums of squares}\label{sec:GreenHOM}
 Throughout, $\LL  = \sum_{j = 1}^mX_j^2$ is a sum of squares of
 (linearly independent) vector fields satisfying assumptions (H1) and (H2) in
 Section \ref{sec.two}.
 Without further comments, we denote by $\G = (\RN,\star,d_\lambda)$
 the {homogeneous Carnot group} on $\RN = \R^n\times \R^p$ constructed in the previous section,
 with the sub-Laplacian $\Lop_\G = \sum_{j = 1}^mZ_j^2$
 which lifts $\LL$ through the projection of $\R^n\times \R^p$ onto $\R^n$.
 As usual, coordinates $(x,\xi)$ are fixed on $\R^n\times \R^p$. We know that
  $d_\lambda$ takes the form
  \begin{equation} \label{sec.two_2:eq_DstardeltaX}
   d_\lambda(x,\xi) =  (\delta_\lambda(x),\delta^*_\lambda (\xi) ),
  \end{equation}
  where $\delta^*_\lambda $ is the dilation  {on $\R^p$}
  in \eqref{delaprodottosa}.
  Three homogenous dimensions naturally arise:
  \begin{itemize}
    \item[-]
     that of $(\R^n,\dela)$, namely $q:= \sum_{j = 1}^n\sigma_j$;

    \item[-]
    that of $(\R^p,\delta^*_\lambda)$, namely $q^*:= \sum_{j = 1}^n \sigma^*_j$;

    \item[-]
    that of $(\R^N,d_\lambda)$, namely $Q=q+q^*$.
    \end{itemize}
   Let us now assume that the  {$\delta_\lambda$-dimension of $\R^n$}
  is greater than $2$:
\begin{equation} \label{sec.two_2:def_Qdelta}
  \textstyle  q= \sum_{j = 1}^n\sigma_j > 2.
 \end{equation}
 In the sequel, we consider
 the homogeneous norm on $\G$ (in the sense \cite[Def. 5.1.1]{BLU})
\begin{equation} \label{sec.two_2:eq_defcanonicalh}
   h(x,\xi) :=
   \sum_{j = 1}^n|x_j|^{\displaystyle {1}/{\sigma_j}} + \sum_{k = 1}^p |\xi_k|^{\displaystyle{1}/{\sigma^*_k}}.
\end{equation}
\begin{rem}\label{rem.sugamma}
  Under condition \eqref{sec.two_2:def_Qdelta}, the  {homogeneous dimension} $Q$
  of the Carnot group $\G$ is also greater than $2$,
  so that the following notable result holds true
  (see \cite[Theorem 2.1]{Folland}):

  \emph{The sub-Laplacian $\Lop_\G$ admits a unique
  {fundamental solution}
   with pole at $0$, that is,
   a function
   $\gamma_\G: \RN\to\R$
   satisfying the following properties:\medskip
   \begin{itemize}
    \item $\gamma_\G \in C^\infty(\RN\setminus\{0\},\R)$
    and $\gamma_\G > 0$ on $\RN\setminus\{0\}$;
    \item $\gamma_\G \in \Lloc{\RN}$ and $\gamma_\G$ vanishes at infinity;
    \item $\Lop_\G \gamma_\G = -\mathrm{Dir}_0$ in the weak sense
    of distributions;
    \item $\gamma_\G$ is $d_\lambda$-homogeneous of degree $2-Q$;
    \item if $h$ is as in \eqref{sec.two_2:eq_defcanonicalh}, there exists
    a (group) constant $\mathbf{c} > 0$ such that
\begin{equation} \label{sec.two_2:eq_estimateG0}
 \mathbf{c}^{-1}\,h^{2-Q} \leq \gamma_\G \leq  \mathbf{c}\,h^{2-Q}\quad \text{on $\G\setminus\{0\}$.}
\end{equation}
  \end{itemize}}
  \noindent One can then obtain the fundamental solution $\Gamma_\G$ of $\LL_\G$  under group-convolution:
\begin{equation} \label{sec.two_2:eq_defGLopG}
   \Gamma_\G(x,\xi;y,\eta)=\gamma_\G((x,\xi)^{-1}\star(y,\eta))\qquad \forall\,\,(x,\xi)\neq (y,\eta).
\end{equation}
 Occasionally, by an abuse of notation, we may denote $\gamma_\G$ by $\Gamma_\G$ as well.
\end{rem}
  Our main goal to show that $\Gamma_\G$
  satisfies the integrability assumptions Theorem
  \ref{sec.one:thm_Main} (plus the other good properties in
  Prop.\,\ref{sec.one:prop_contVanishGamma}). 
  As a consequence, since we proved in
  Theorem \ref{sec.two_1:thm_regliftLopX} that
  $\LL_\G$ is a saturable Lifting of $\LL$, then
  $\LL$ admits a fundamental solution
  obtained by a saturation of $\Gamma_\G$.
  This will prove Theorem \ref{thm.riassuntivoooo}.\medskip

 Due to its role in the saturation formula
 \eqref{sec.one:mainThm_defGamma},
 we study some properties of the map
   \begin{equation} \label{sec.two_2:eq_convmap}
    F: \R^n\times\RN\to\RN, \quad F (x,y,\eta) :=
    (x,0)^{-1}\star(y,\eta).
   \end{equation}
   First of all we observe that, since the family of dilations
   $\{d_\lambda\}_{\lambda > 0}$ forms a one-parameter group
   of automorphisms of $\G$, for every $x\in\R^n$
   and every $(y,\eta)\in \RN$ we have
   \begin{align*}
     F (\delta_\lambda &(x), d_\lambda(y,\eta) )=
     (\delta_\lambda(x),0 )^{-1}\star d_\lambda(y,\eta)
     \\
     & \stackrel{\eqref{sec.two_2:eq_DstardeltaX}}{=}
      (d_\lambda(x,0) )^{-1}\star d_\lambda(y,\eta)
     =
     d_\lambda ((x,0)^{-1}\star(y,\eta) )
     = d_\lambda (F (x, (y,\eta) ) );
   \end{align*}
   hence, if we consider the family of dilations
   $\{\widetilde{D}_\lambda\}_{\lambda > 0}$
   on $\R^n\times\RN$ given by
   $$\widetilde{D}_\lambda:\R^n\times\RN\to\R^n\times\RN,
   \quad \widetilde{D}_\lambda (x,y,\eta)
   =  (\delta_\lambda(x),d_\lambda(y,\eta) ),$$
   then the components of $F$, say
   $$F_1,\ldots,F_n,\quad F_{n+1},\ldots,F_N,$$
   are $\widetilde{D}_\lambda$-homogeneous of degrees, respectively,
   $$\sigma_1,\ldots,\sigma_n,\quad
   \sigma^*_1,\ldots,\sigma^*_p.$$
   On the other hand, if we take $x = 0$,
   we get $F (0,(y,\eta) )= (y,\eta)$,
   whilst $F (x,(x,0) )= (0,0)$
   (since the origin is the neutral element of $\G$).
   By all these facts, we deduce that
   the components of $F$ are $\widetilde{D}_\lambda$-homogeneous polynomials,
   and that, for every $x\in\R^n$ and every
   $(y,\eta)\in\RN$,
   they take the form
   \begin{equation} \label{sec.two_2:eq_expressionconvmap}
    \begin{split}
     F_1 (x, y,\eta) ) & = y_1-x_1, \\
     F_i (x,y,\eta) & = y_i-x_i+ p_i (x,y,\eta)  \qquad (i = 2,\ldots,n), \\
     F_{n+k} (x,y,\eta) & = \eta_k+ q_k (x,y,\eta),\qquad  (k = 1,\ldots,p),
    \end{split}
   \end{equation}
  where $p_i$ and $q_k$ are
  $\widetilde{D}_\lambda$-homogeneous polynomials
  of degrees $\sigma_i$ and $\sigma^*_k$, respectively, and
\begin{itemize}
  \item
  $p_i$ only depends on those variables
  $x_h,y_h$ and $\eta_j$ such that
  $\sigma_h,\sigma^*_j< \sigma_i$;

  \item $q_k$ only depends on those variables
  $x_h,y_h$ and $\eta_j$ such that
  $\sigma_h,\sigma^*_j< \sigma^*_k$;

  \item $p_i (0,y,\eta) = q_k (0,y,\eta ) = 0$,
  for every $(y,\eta)\in\RN$.
\end{itemize}
 \begin{rem}\label{sec.two_2:remChangeF}
 Let $x,y \in \R^n$ be fixed. Since the polynomial
   $q_1$  {does not depend} on $\eta_1,\ldots,\eta_p$ and since,
   for every $k \in\{2,\ldots,p\}$, the polynomial
   $q_k$ only depends on $\eta_1,\ldots,\eta_{k-1}$, we see that
   \begin{equation} \label{sec.two_2:def_mapPsixy}
    \Psi_{x,y}: \R^p\longto\R^p, \quad \Psi_{x,y}(\eta) :=  \Big(F_{n+1} (x,y,\eta),
    \ldots,F_N (x,y,\eta) \Big),
   \end{equation}
   defines a  {$C^\infty$-diffeomorphism}, with polynomial components.
   Hence, in particular, we have
   \begin{equation} \label{sec.two_2:Psiexplodes}
    \lim_{\|\eta\|\to\infty}\Psi_{x,y}(\eta) = \infty.
   \end{equation}
   Furthermore, by \eqref{sec.two_2:eq_expressionconvmap}, we get
   \begin{equation} \label{sec.two_2:eq_Jacobianone}
    \det (\mathcal{J}_{\displaystyle \Psi_{x,y}}(\eta) ) = 1,
    \quad \text{for every $\eta \in \R^p$}.
   \end{equation}
\end{rem}
 Summing up, by
 \eqref{sec.two_2:eq_estimateG0} and
 \eqref{sec.two_2:eq_defGLopG},
 we obtain (whenever
 $(y,\eta)\neq (x,0)$)
 \begin{equation} \label{sec.two_2:eq_mainEstimG}
   \mathbf{c}^{-1}\, K^{2-Q} (x,y,\eta)
   \leq \Gamma_\G (x,0;y,\eta) \leq
   \mathbf{c}\, K^{2-Q} (x,y,\eta),
  \end{equation}
  where we have set
  $$K (x,y,\eta) := h((x,0)^{-1}\star(y,\eta) ), \quad \text{with $h$ as in
  \eqref{sec.two_2:eq_defcanonicalh}}.$$
  Taking into account  \eqref{sec.two_2:eq_defcanonicalh}
  and \eqref{sec.two_2:eq_expressionconvmap}, a more explicit
  expression for $K$ is
\begin{equation} \label{sec.two_2:eq_defKernelK}
  \begin{split}
   K (x,y,\eta) & =
   \sum_{i = 1}^n \Big|F_i (x,y,\eta) \Big|^{\displaystyle{1}/{\sigma_i}} +
     \sum_{k = 1}^p \Big|F_{n+k} (x,y,\eta) \Big|^{\displaystyle {1}/{\sigma^*_k}}
     \\
     & =
    |y_1-x_1 | +
   \sum_{i = 2}^n \Big|y_i-x_i+p_i (x,y,\eta ) \Big|^{\displaystyle {1}/{\sigma_i}} +
   \sum_{k = 1}^p
    \Big|\eta_k+q_k (x,y,\eta ) \Big|^{\displaystyle {1}/{\sigma^*_k}}.
   \end{split}
  \end{equation}
  Thanks to \eqref{sec.two_2:eq_mainEstimG},
  we are now able to prove the following crucial result:
\begin{thm}\label{sec.two_2:thmassumptionI}
 Suppose that \eqref{sec.two_2:def_Qdelta} holds true.
 Then the fundamental solution
 $\Gamma_\G$ of $\Lop_\G$ satisfies
 assumptions \emph{(i)} and \emph{(ii)}
 in Theorem \ref{sec.one:thm_Main}.
 As a consequence,
\begin{equation*}
  \Gamma(x;y) = \int_{\R^p} \Gamma_\G \big(x,0;y,\eta\big)\,\d\eta\quad (x\neq y)
\end{equation*}
 is a fundamental solution for $\LL$.
 Moreover, if $h$
 is as in \eqref{sec.two_2:eq_defcanonicalh},
 one has global estimates
   $$\mathbf{c}^{-1}\,\int_{\R^p} h^{2-Q}((x,0)^{-1}\star(y,\eta) )\,\d \eta
   \leq \Gamma(x;y) \leq \mathbf{c}\,
   \int_{\R^p} h^{2-Q}((x,0)^{-1}\star(y,\eta) )\,\d \eta,$$
   holding true for every $x,y \in \R^n$ with $x\neq y$,
   where $\mathbf{c}$ is a constant (only depending on $\G$),
   $Q$ is the homogeneous dimension of $\G$,
   and $\star$ is the group law of $\G$.
\end{thm}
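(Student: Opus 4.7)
The plan is to verify hypotheses (i) and (ii) of Theorem~\ref{sec.one:thm_Main} for $\Gamma_\G$. Since Theorem~\ref{sec.two_1:thm_regliftLopX} already shows that $\Lop_\G$ is a saturable lifting of $\LL$, that verification will immediately yield, via Theorem~\ref{sec.one:thm_Main}, that $\Gamma$ defined by \eqref{sec.one:mainThm_defGamma} is a fundamental solution for $\LL$; the stated two-sided bounds on $\Gamma$ will then come from integrating the pointwise estimate \eqref{sec.two_2:eq_mainEstimG} in the $\eta$-variable. Using \eqref{sec.two_2:eq_mainEstimG} once, the whole proof reduces to the two finiteness statements
\begin{equation*}
 \int_{\R^p} K(x,y,\eta)^{2-Q}\,\d\eta<\infty\ \text{ for } x\neq y,\qquad
 \int_{\mathcal{K}\times\R^p} K(x,y,\eta)^{2-Q}\,\d y\,\d\eta<\infty,
\end{equation*}
for every $x\in\R^n$ and every compact $\mathcal{K}\subseteq\R^n$, where $K$ is the kernel \eqref{sec.two_2:eq_defKernelK}.

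The main tool is the polynomial change of variable $\eta\mapsto\eta':=\Psi_{x,y}(\eta)$ from Remark~\ref{sec.two_2:remChangeF}, whose Jacobian is identically $1$. After this substitution, the last $p$ components of $(x,0)^{-1}\star(y,\eta)$ become exactly $\eta'_1,\ldots,\eta'_p$, so \eqref{sec.two_2:eq_defKernelK} yields the crucial lower bound $K\geq N^*(\eta')$, where $N^*(\eta'):=\sum_{k=1}^p|\eta'_k|^{1/\sigma^*_k}$ is the $\delta^*_\lambda$-homogeneous norm on $\R^p$. For assumption (i): when $x\neq y$, the kernel $K(x,y,\cdot)$ is continuous and strictly positive on $\R^p$ (because $h$ is a norm on $\G$ and $(x,0)\neq(y,\eta)$ for every $\eta$); combined with $K\geq N^*(\eta')\to\infty$ as $|\eta'|\to\infty$, this forces $K(x,y,\cdot)$ to be bounded below by a positive constant globally. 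Splitting the integral at $N^*(\eta')=1$, the part over $\{N^*(\eta')\leq 1\}$ is the integral of a bounded function over a bounded set, while on $\{N^*(\eta')>1\}$ the bound $K\geq N^*(\eta')$ together with polar coordinates with respect to $N^*$ reduces the computation to $\int_1^\infty r^{q^*-1+(2-Q)}\,\d r=\int_1^\infty r^{1-q}\,\d r$, which converges precisely because $q>2$.

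For assumption (ii) the same estimates must be made uniform in $y\in\mathcal{K}$. Split the double integral at $N^*(\eta)=R$. On $\mathcal{K}\times\{N^*(\eta)\leq R\}$ apply the Carnot-group left-translation $w=(x,0)^{-1}\star(y,\eta)$, whose Jacobian is $1$ (Haar measure on $\G$ is Lebesgue), reducing the integral to $\int_{A}\gamma_\G(w)\,\d w$ with $A:=(x,0)^{-1}\star(\mathcal{K}\times\{N^*(\eta)\leq R\})$ bounded (the group law being polynomial), and this is finite since $\gamma_\G\in L^1_{\mathrm{loc}}(\R^N)$ by Remark~\ref{rem.sugamma}. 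On the remaining part the key algebraic input is that every monomial of the polynomial $q_k(x,y,\eta)$ of Remark~\ref{sec.two_2:remChangeF} must contain at least one $x$-factor (since $q_k(0,y,\eta)\equiv 0$); hence the $\delta^*_\lambda$-degree in $\eta$ of $q_k$ is at most $\sigma^*_k-1$, and one obtains the polynomial estimate $|q_k(x,y,\eta)|\leq C_\mathcal{K}(1+N^*(\eta))^{\sigma^*_k-1}$ uniformly in $y\in\mathcal{K}$. A scaling argument (write $\eta=\delta^*_r v$ with $N^*(v)=1$ and let $r=N^*(\eta)\to\infty$) then yields
\begin{equation*}
 N^*(\Psi_{x,y}(\eta))\geq\tfrac{1}{2}N^*(\eta),\qquad\text{hence}\qquad K(x,y,\eta)\geq\tfrac{1}{2}N^*(\eta),
\end{equation*}
uniformly in $y\in\mathcal{K}$ and $N^*(\eta)\geq R_0(x,\mathcal{K})$; the polar-coordinate bound of the previous paragraph then provides a uniform control of the inner $\eta$-integral, and Fubini delivers the required double integrability.

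I expect the main delicacy to lie precisely in the uniform (in $y\in\mathcal{K}$) lower bound $N^*(\Psi_{x,y}(\eta))\geq\tfrac{1}{2}N^*(\eta)$: the change of variable $\Psi_{x,y}$ itself depends on $y$, and without the vanishing $q_k(0,y,\eta)\equiv 0$ -- which forces each monomial of $q_k$ to lose at least one unit of $\delta^*_\lambda$-degree in $\eta$ -- the correction $q_k$ could in principle contain a purely $\eta$-monomial of the maximal weight $\sigma^*_k$, which would spoil the clean lower bound and hence the uniform integrability needed for hypothesis (ii). Once (i) and (ii) are secured, Theorem~\ref{sec.one:thm_Main} produces $\Gamma$ and the formula \eqref{stimosaintrp}, while integrating the two-sided bound \eqref{sec.two_2:eq_mainEstimG} in $\eta$ gives at once the announced two-sided estimates on $\Gamma$, completing the proof.
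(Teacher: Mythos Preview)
Your argument is correct, and for hypothesis~(i) it is essentially identical to the paper's: change variable via $\Psi_{x,y}$, use continuity of $K^{2-Q}(x,y,\cdot)$ near the origin (since $x\neq y$), and on $\{N^*>1\}$ invoke the clean lower bound $K\geq N^*$ together with the convergence of $\int_1^\infty r^{1-q}\,\d r$ (equivalently, the dyadic-annulus computation the paper spells out).

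For hypothesis~(ii) the two proofs diverge. You stay in the original $\eta$-variable, handle the compact region $\mathcal{K}\times\{N^*(\eta)\leq R\}$ by the left translation $w=(x,0)^{-1}\star(y,\eta)$ and the local integrability of $\gamma_\G$, and on the large region you prove the uniform inequality $N^*(\Psi_{x,y}(\eta))\geq\tfrac12 N^*(\eta)$ by exploiting $q_k(0,y,\eta)\equiv 0$ to force the $\delta^*_\lambda$-degree of $q_k$ in $\eta$ down to at most $\sigma^*_k-1$, then a compactness/scaling argument on the $N^*$-sphere. This works (the $N^*$-unit sphere is compact and $N^*$ is uniformly continuous there, so the perturbation $\epsilon_k=O(r^{-1})$ does not spoil the bound), but it is heavier than necessary. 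The paper instead performs the \emph{joint} change of variable $(y,\eta)\mapsto(u,v):=(y,\Psi_{x,y}(\eta))$ on $\R^n\times\R^p$; its Jacobian matrix is block lower-triangular with blocks $I_n$ and $\mathcal{J}_{\Psi_{x,y}}$, hence determinant $\equiv 1$. In the new coordinates the region $K\times\{N(v)>1\}$ is handled exactly as in~(i) with the automatic bound $K\geq N(v)$, and the region $K\times\{N(v)\leq 1\}$ is handled by local integrability of $\Gamma_\G(x,0;\cdot,\cdot)$ (its preimage under the change of variable is bounded). In short: your route proves an intrinsically interesting lower bound on $K$ in the original variable, while the paper's route avoids any analysis of the structure of $q_k$ by absorbing the $y$-dependence of $\Psi_{x,y}$ into the change of variable itself. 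Once (i) and (ii) are in hand, both proofs finish identically via Theorems~\ref{sec.two_1:thm_regliftLopX} and~\ref{sec.one:thm_Main}, and the two-sided estimate follows by integrating~\eqref{sec.two_2:eq_mainEstimG}.
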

  \begin{proof}
   First we prove condition (i).
   We need to prove \eqref{sec.one:eq_sumtildeGamma_1}
   when $\widetilde{\Gamma}$ is $\Gamma_\G$;
   due to \eqref{sec.two_2:eq_mainEstimG}, we need to prove that,
   for fixed $x\neq y$ in $\R^n$, we have
\begin{equation}\label{sommabiliii}
  \eta \mapsto  K^{2-Q} (x,y,\eta)\quad \text{belongs to}\quad L^1(\R^p).
\end{equation}
  We perform the change of variable $\eta = \Psi_{x,y}^{-1}(u)$
  introduced in Remark \ref{sec.two_2:remChangeF}:
   \begin{align*}
    \int_{\R^p} K^{2-Q} (x,y,\eta)\,\d\eta
    &=
    \int_{\R^p} K^{2-Q} (x,y,\Psi_{x,y}^{-1}(u))\cdot
     |\det (\mathcal{J}_{\Psi^{-1}_{x,y}}(u) ) |\,\d u \\
    & \stackrel{\eqref{sec.two_2:eq_Jacobianone}}{=}
    \int_{\R^p} K^{2-Q} (x,y,\Psi_{x,y}^{-1}(u) )\,\d u.
   \end{align*}
   We now observe that, since $x\neq y$, the function
    $u\mapsto  K^{2-Q} (x,y,\Psi_{x,y}^{-1}(u) )$ is
    {continuous}
   on $\R^p$, hence it is integrable  {on every compact subset of $\R^p$}.
   In fact, $ K (x,y,\Psi_{x,y}^{-1}(u) ) = 0$ iff
   $$(x,0)^{-1}\star  (y,\Psi_{x,y}^{-1}(u) ) = 0,$$
   which necessarily implies $x = y$.
   Thus, if we consider the homogeneous norm $N$ in \eqref{sec.two_1:eq_defnormN},
  \eqref{sommabiliii} will follow if we show that
   $$
    \int_{\{N(u)\geq 1\}}\!\!\!\!
     K^{2-Q} (x,y,\Psi_{x,y}^{-1}(u) )\,\d u < \infty.
   $$
    By exploiting the expression
   of $ K$ given in \eqref{sec.two_2:eq_defKernelK}
   and the definition of $\Psi_{x,y}$, we infer
   \begin{align*}
     K& (x,y,\Psi_{x,y}^{-1}(u) )
    \stackrel{\eqref{sec.two_2:eq_defKernelK}}{\geq} \sum_{k = 1}^p
    |F_{n+k} (x,y,\Psi_{x,y}^{-1}(u))  |^{{1}/{\sigma^*_k}}   \\
   & \stackrel{\eqref{sec.two_2:def_mapPsixy}}{=}
   \sum_{k = 1}^p
    |\Psi_{x,y} (\Psi_{x,y}^{-1}(u) ) |^{{1}/{\sigma^*_k}}
   = \sum_{k = 1}^p|u_k|^{{1}/{\sigma^*_k}} {=} N(u).
   \end{align*}
  Therefore, we are left to show that
   \begin{equation} \label{sec.two_2:eq_toProvefinal}
  \int_{\{N(u)\geq 1\}}N^{2-Q}(u)\,\d u < \infty.
   \end{equation}
   In proving \eqref{sec.two_2:eq_toProvefinal},
   we use a typical argument on diadic annuli (modeled on the homogeneous norm $N$):
   setting, for $j \in \mathbb{N}$, $C_j := \{u \in \R^p: 2^{j-1}\leq N(u) < 2^{j}\}$,
   then (see \eqref{delaprodottosa} for the definition of $\delta^*_\lambda$)
   \begin{align*}
    \int_{\{N(u)\geq 1\}}&N^{2-Q}(u)\,\d u
    = \sum_{j = 1}^{\infty}\int_{C_j}N^{2-Q}(u)\,\d u \quad
     (\text{change of variable $u = \delta^*_{2^j}(\eta)$} ) \\
    & \,\,\,= \sum_{j = 1}^\infty (2^j )^{q^*}\,
    \int_{\delta^*_{2^j}(C_j)}\,N^{2-Q}
     (\delta^*_{2^j}(\eta) )\,\d \eta \\
    & \,\,\,= \left(\int_{\{1/2\leq N(\eta)\leq 1\}}\!\!\!
    N^{2-Q}(\eta)\,\d \eta\right)\,
    \sum_{j = 1}^\infty  (2^j )^{2-Q+q^*}<\infty,
    \end{align*}
   since $2-Q+q^*=2-q>0$  by \eqref{sec.two_2:def_Qdelta}.
   This ends the proof of (i).\medskip

   Finally we prove (ii) of Theorem \ref{sec.one:thm_Main}.
   We need to prove \eqref{sec.one:eq_sumtildeGamma_2}
   when $\widetilde{\Gamma}$ is $\Gamma_\G$.
  If $x\in \R^n$ is fixed and $K\subset \R^n$
  is compact, we perform the change of variable
 $ (u,v)=(y,\Psi_{x,y}(\eta))$ and we get (arguing as in (i)
 to recognize that this substitution has Jacobian determinant $\equiv 1$)
\begin{align*}
    &\int_{K\times\R^p}
    \Gamma_\G(x,0;y,\eta)\,\d y\,\d \eta
    =
    \int_{K\times\R^p}
    \Gamma_\G (x,0;u,\Psi_{x,u}^{-1}(v))\,\d u\,\d v \\
    &=\int_{K\times\{N(v)\leq 1\}}\{\cdots\}\,\,\d u\,\d v+
  \int_{K\times\{N(v)>1\}}\{\cdots\}\,\,\d u\,\d v =:\mathrm{I}+\mathrm{II},
\end{align*}
 where $N$ is as above. Clearly I is finite since we integrate a continuous function
 on a compact set.
 As for II, we use \eqref{sec.two_2:eq_mainEstimG} and we
 have to prove the finiteness of the following integral:
   \begin{align*}
  &\int_{K\times\{N(v)> 1\}}
     K^{2-Q} (x,u,\Psi_{x,u}^{-1}(v) )\,\d u\,\d v \\
    & \stackrel{\eqref{sec.two_2:eq_defKernelK}}{\leq}
    \int_{K\times\{N(v)> 1\}}
    \left(\sum_{k = 1}^p
     |F_{n+k} (x,u,\Psi_{x,u}^{-1}(v) )
     |^{{1}/{\sigma^*_k}}\right)^{2-Q}\,\d u\,\d v \\
     & \stackrel{\eqref{sec.two_2:def_mapPsixy}}{=}
  \int_{K\times\{N(v)> 1\}}
    \left(\sum_{k = 1}^p |v_k|^{{1}/{\sigma^*_k}}\right)^{2-Q}\,\d u\,\d v
    = \mathbf{c}\,\int_{K\times\{N(v)>1\}}N^{2-Q}(v)\,\d u\,\d v.
   \end{align*}
 The finiteness of the last integral follows by the same argument
 as in the previous part of the proof (and the fact that $K$ is compact).
  \end{proof}
\begin{prop}\label{sec.two_2:propGammaHom}
 In the assumption and notation of Theorem
 \ref{sec.two_2:thmassumptionI},
 the function $\Gamma$ fulfils the following
 (joint) $\dela$-homogeneity property:
 \begin{equation} \label{sec.two_2:eq_GammaHomogeneous}
    \Gamma (\delta_\lambda(x);\delta_\lambda(y) ) = \lambda^{2-q}\,
    \Gamma(x;y), \quad \text{for every $x,y \in \R^n$ with $x \neq y$ and $\lambda>0$}.
   \end{equation}
 Furthermore, it is continuous out of the diagonal of $\R^n\times\R^n$, and it is symmetric:
 \begin{equation} \label{sec.two_2:eq_GammaHomogeneousSYMM}
\Gamma(x;y)=\Gamma(y;x) \qquad \text{for every $x,y\in\R^n$ with $x\neq y$.}
\end{equation}
 Finally, for every fixed $x \in \R^n$, we have the following properties:\medskip
    \begin{itemize}
    \item[(i)]
   $\Gamma(x;\cdot)=\Gamma(\cdot;x)$ is smooth and $\LL$-harmonic on $\R^n\setminus\{x\}$;

    \item[(ii)]
    $\Gamma(x;\cdot)=\Gamma(\cdot;x)$ vanishes at infinity (uniformly for $x$ in compact sets);

   \item[(iii)]
    $\Gamma$ is locally integrable on $\R^n\times \R^n$
    and $C^\infty$ out of the diagonal of $\R^n\times \R^n$.
   \end{itemize}
    \end{prop}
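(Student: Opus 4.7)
Proof proposal. The $\dela$-homogeneity of $\Gamma$ is immediate from \eqref{stimosaintrp}: the substitution $\eta=\delta^*_\lambda(\tilde\eta)$ (so that $d\eta=\lambda^{q^*}\,d\tilde\eta$ with $q^*=\sum_k\sigma^*_k$), combined with the identity $(\delta_\lambda x,0)^{-1}\star(\delta_\lambda y,\delta^*_\lambda\tilde\eta) = d_\lambda\bigl((x,0)^{-1}\star(y,\tilde\eta)\bigr)$ and the $d_\lambda$-homogeneity of $\gamma_\G$ of degree $2-Q$ (Remark \ref{rem.sugamma}), yields the exponent $2-Q+q^*=2-q$, which is \eqref{sec.two_2:eq_GammaHomogeneous}. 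For the symmetry, I would use the standard invariance $\gamma_\G(w^{-1})=\gamma_\G(w)$ of the sub-Laplacian kernel (a consequence of the formal self-adjointness of $\Lop_\G$ and of the uniqueness in Remark \ref{rem.sugamma}) to rewrite
\[
\Gamma(x;y)=\int_{\R^p}\gamma_\G\bigl((y,\eta)^{-1}\star(x,0)\bigr)\,d\eta,
\]
and then perform the polynomial change of variable $\eta\mapsto\eta'$ defined implicitly by $(y,\eta)^{-1}\star(x,0)=(y,0)^{-1}\star(x,\eta')$. As in Remark \ref{sec.two_2:remChangeF}, the triangular nature of the group law ensures that this substitution is a diffeomorphism of $\R^p$ onto itself with Jacobian $\equiv 1$ (the matrix $\de\eta'/\de\eta$ being unipotent lower-triangular, each component of $\eta'$ being an affine-in-$\eta_k$ polynomial with unit linear coefficient and offset polynomial in $\eta_1,\ldots,\eta_{k-1}$), and it immediately produces $\Gamma(y;x)$.

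Property (i) follows from the $C^\infty$-hypoellipticity of $\Lop$ on $\R^n$ (H\"ormander's rank condition holds at every point of $\R^n$ by Remark \ref{sec.two_1:remHormander}) applied to the distributional identity $\Lop\Gamma(x;\cdot)=0$ on $\R^n\setminus\{x\}$. Property (ii) and the (joint) continuity off the diagonal follow from Proposition \ref{sec.one:prop_contVanishGamma} (parts (b) and (a) respectively) applied with $\widetilde\Gamma=\Gamma_\G$; the main task is to verify condition (B). For this, I would use the quasi-triangle inequality $h(a\star b)\geq c\,h(b)-h(a)$ for the homogeneous norm $h$ of $\G$ to obtain the uniform-in-$y\in\R^n$ lower bound
\[
K(x,y,\eta)= h\bigl((x,0)^{-1}\star(y,\eta)\bigr)\geq c\,h(y,\eta)-h(x,0)\geq c\,N(\eta)-h(x,0),
\]
which, combined with \eqref{sec.two_2:eq_mainEstimG}, gives $\Gamma_\G(x,0;y,\eta)\leq c_x\,N(\eta)^{2-Q}$ as soon as $N(\eta)$ is large. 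The resulting majorant is integrable on $\R^p\setminus K_x$ by the very dyadic-annulus argument used in the proof of Theorem \ref{sec.two_2:thmassumptionI} (recall $2-Q+q^*=2-q<0$ by our hypothesis $q>2$).

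Finally, for (iii): joint local integrability on $\R^n\times\R^n$ is Tonelli's theorem applied to \eqref{stimosaintrp}, reduced by the Jacobian-one change of variable $(x,u,v)=(x,y,\Psi_{x,y}(\eta))$ (as in Theorem \ref{sec.two_2:thmassumptionI}) to an integral dominated by $\int_{K\times K\times\R^p}N(v)^{2-Q}\,dx\,du\,dv$, which is finite on $\{N(v)\geq 1\}$ by the dyadic computation and harmless on $\{N(v)\leq 1\}$ in view of the local integrability of $\gamma_\G$ (Remark \ref{rem.sugamma}). The joint $C^\infty$ regularity of $\Gamma$ on $\{(x,y):x\neq y\}$ is obtained by differentiating under the integral sign in \eqref{stimosaintrp}: the integrand $(x,y,\eta)\mapsto\gamma_\G((x,0)^{-1}\star(y,\eta))$ is jointly smooth on $\{x\neq y\}\times\R^p$, each derivative $D^\beta\gamma_\G$ is $d_\lambda$-homogeneous of degree $2-Q-|\beta|_*$ (and therefore admits a pointwise estimate analogous to \eqref{sec.two_2:eq_mainEstimG}), and the uniform-in-compact-subsets-of-$\{x\neq y\}$ integrable majorants needed to interchange limit and integral are supplied by the same quasi-triangle-inequality argument as above. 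The main conceptual obstacle in the whole proof is the production of the uniform-in-$y$ lower bound on $K(x,y,\eta)$ above (which, happily, comes for free from the general quasi-triangle inequality on $\G$); once this is secured, everything else follows by dominated convergence and the machinery already developed in Sections \ref{sec.one} and \ref{sec:GreenHOM}.
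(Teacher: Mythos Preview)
Your argument for the homogeneity, for properties (i)--(iii), and for the continuity is essentially correct and close in spirit to the paper's (the paper uses the change of variable $\Psi_{x,y}$ rather than the quasi-triangle inequality for the uniform majorant, and obtains the joint $C^\infty$ regularity off the diagonal not by differentiating under the integral sign but by applying the hypoellipticity of the ``doubled'' operator $\sum_j\big(X_j(x)^2+X_j(y)^2\big)$ on $\R^n\times\R^n$, which is a cleaner route than tracking all the derivative estimates).

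The genuine issue is your proof of the symmetry. The change of variable you propose, $\eta\mapsto\eta'$ with $(y,\eta)^{-1}\star(x,0)=(y,0)^{-1}\star(x,\eta')$, is \emph{not} a consequence of ``the triangular nature of the group law'' alone: for such an $\eta'$ to exist one must know that the first $n$ components of $(y,0)\star(y,\eta)^{-1}\star(x,0)$ equal $x$, i.e.\ that the element $(y,0)\star(y,\eta)^{-1}$ lies in $\{0\}\times\R^p$. Remark~\ref{sec.two_2:remChangeF} only controls the last $p$ components of $F$ and says nothing about this. What is actually needed is the structural fact that \emph{the first $n$ components of $(x,\xi)\star(y,\eta)$ are independent of $\xi$} (equivalently, $H:=\{0\}\times\R^p$ is a closed subgroup of $\G$ and the fibres $\{x\}\times\R^p$ are its left cosets). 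This is true, and can be extracted from Folland's construction via the identity $\pi(a*b)=\exp(b\cdot X)(\pi(a))$ used in the proof of Theorem~\ref{sec.two_1:lempropPi}(3), but it is nowhere stated in the paper and you must prove it. Once this is in hand, your substitution is well defined and has Jacobian determinant $\pm1$ (not unipotent: already in the Grushin examples one gets $\eta'_k=-\eta_k+\text{(lower)}$, so the diagonal entries are $-1$), which is all that matters for the integral. With this gap filled, your argument is correct and considerably more direct than the paper's, which proves symmetry by a potential-theoretic route: it first shows that $y\mapsto\int\Gamma(x;y)\,\Lop\varphi(x)\,dx=-\varphi(y)$ a.e.\ (so that $\Lop\Gamma(\cdot;x)=-\mathrm{Dir}_x$ for a.e.\ $x$), and then concludes $\Gamma(x;\cdot)\equiv\Gamma(\cdot;x)$ from the Strong Maximum Principle for $\Lop$.
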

      \begin{proof}
   Let $\lambda > 0$ and let $x,y \in \R^n$ be distinct. We have
   \begin{align*}
    \Gamma (\delta_\lambda(x);&\delta_\lambda(y) )
 =   \int_{\R^p}\Gamma_\G(d_\lambda(x,0);\delta_\lambda(y),\eta)\,
    \d \eta.
   \end{align*}
   By the substitution
   $\eta = \delta^*_{\lambda}(u)$, and by the $d_\lambda$-homogeneity of degree $2-Q$ of $\Gamma_\G$, we obtain
   \begin{align*}
    \Gamma(\delta_\lambda(x);\delta_\lambda(y) )
    &= \lambda^{q^*}\,
    \int_{\R^p}
    \Gamma_\G(d_\lambda(x,0);d_\lambda(y,u))\,
    \d u \\
    &= \lambda^{2-Q+q^*}\,\int_{\R^p}
    \Gamma_\G(x,0;y,u)\,\d u=  \lambda^{2-q}\,
    \Gamma(x;y),
   \end{align*}
  since $Q=q+q^*$. This gives \eqref{sec.two_2:eq_GammaHomogeneous}.
  The proofs of \eqref{sec.two_2:eq_GammaHomogeneousSYMM} and (i)-to-(iii)
  are technical and long; for this reason we postpone them to the Appendix, Section \ref{sec:Appendix}.
 \end{proof}
  \section{Examples}\label{sec:examples}
 This last section is devoted to present some explicit examples of linear homogeneous
 PDOs to which our theory applies.
\begin{ex}[\textbf{Grushin operator on $\R^2$}] \label{sec.th:ex_Grushin}
   Let us consider  the vector fields  on $\R^2$
\begin{equation*}
    X_1 = \de_{x_1}, \quad X_2 = x_1\,\de_{x_2}.
\end{equation*}
   It is readily seen that $X_1$ and $X_2$ are
   homogeneous of degree $1$ with respect to the dilations
 $$\delta_\lambda(x_1,x_2)=(\lambda x_1,\lambda^2 x_2).$$
 Obviously,  assumptions (H1) and (H2)
  of Section \ref{sec.two} are satisfied and Theorem
  \ref{sec.two_1:thmLiftMain} can be applied; the relevant
   Carnot group is $\G = (\R^3,\star,d_\lambda)$ with
\begin{equation*}
    d_\lambda(x_1,x_2,\xi) = (\lambda x_1,\lambda^2 x_2, \lambda\,\xi),\quad Q=4,
\end{equation*}
  while the composition law is
  \begin{equation*}
   (x_1,x_2,\xi)\star(y_1,y_2,\eta) := (x_1+y_1, x_2+y_2+x_1\eta, \xi+\eta).
  \end{equation*}
  Furthermore, the vector fields $Z_1,Z_2$ lifting $X_1$ and $X_2$ are
  \begin{equation} \label{sec.th:exGrushinZs}
   Z_1 = \de_{x_1}, \qquad Z_2 = x_1\,\de_{x_2}+\de_\xi.
  \end{equation}
  The operator $\Lop= X_1^2+X_2^2$ lifts to
  the sub-Laplacian $\Lop_\G = Z_1^2+Z_2^2$.
  The latter is (modulo a change of variable)
  the Kohn-Laplacian on the first Heisenberg group, whence its fundamental solution
  with pole at the origin is
  $$\Gamma_\G(x,\xi) = c\, \Big((x_1^2+\xi^2)^2+16\,(x_2-1/2\,x_1\xi)^2 \Big)^{-1/2},
  \quad (x,\xi)\neq (0,0),$$
  where $c > 0$ is a suitable constant.
  According to Theorem \ref{sec.two_2:thmassumptionI}, the function
\begin{equation} \label{sec.th:GammaGrushin}
  \Gamma(x_1,x_2;y_1,y_2) = c\,\int_{\R}\frac{\d\eta}
  {\sqrt{ ((x_1-y_1)^2+\eta^2 )^2+ 4\,
    (2\,x_2 - 2\,y_2 + \eta\,(x_1+y_1) )^2}},
 \end{equation}
   is the  unique fundamental solution for the Grushin operator $\Lop$ vanishing at infinity.
   From \eqref{sec.th:GammaGrushin} we also derive that, for every
   $x \in \R^2$, the function $\Gamma(x;\cdot)$ has a pole at
   $x$: in fact (see Proposition \ref{sec.one:prop_poleGamma})
   $$\liminf_{y\to x}\Gamma(x,y) \geq c\,\int_{\R}
   \frac{\d\eta}{\sqrt{\eta^4 + 16\,x_1^2\eta^2}} = \infty.$$
   Finally, the integral in \eqref{sec.th:GammaGrushin} can be expressed in terms of
    Elliptic Functions: more precisely, we have
   \begin{equation} \label{sec.th:GammaGrushinExplicit}
    \Gamma(x;y) = \frac{c\,\sqrt{2}}{\sqrt[4]{(x_1^2+y_1^2)^2+4\,(x_2-y_2)^2}}\cdot
    \mathrm{K}\left(\frac{1}{2}+
    \frac{x_1y_1}{\sqrt[4]{(x_1^2+y_1^2)^2+4\,(x_2-y_2)^2}}\right),
   \end{equation}
   where $\mathrm{K}$ denotes the complete elliptic integral of the first
   kind, that is,
   $$\mathrm{K}(m) := \int_0^{\pi/2} (1-m\,\sin^2(t) )^{-1/2}\,\d t,
   \quad \text{for $-1 < m < 1$}.$$
   This gives back a formula already obtained by Greiner \cite{Greiner}
   (see also Beals, Gaveau, Greiner \cite{BealsGaveauGreiner1, BealsGaveauGreiner3};
   Beals, Gaveau, Greiner, Kannai \cite{BealsGaveauGreinerKannai2}; Bauer, Furutani, Iwasaki \cite{BauerFurutaniIwasaki}).
 \end{ex}
 \begin{ex}[\textbf{Another Grushin-type operator}] \label{sec.th:exGrushinII}
  Let us consider the
   vector fields on $\R^2$
   \begin{equation*}
    X_1 = \de_{x_1}, \quad X_2 = x_1^2\,\de_{x_2}.
   \end{equation*}
   They are homogeneous of degree $1$ with respect to the dilations
 $$\delta_\lambda(x_1,x_2)= (\lambda x_1,\lambda^3 x_2).$$
 The Lie algebra $\mathfrak{a}$ generated by $X_1,X_2$ is $4$-dimensional
 and the dimensions of the layers of the stratification
 as in \eqref{sec.two_1:astratified} are, respectively, $2,1,1$.
 Assumptions (H1) and (H2)
 of Section \ref{sec.two} are satisfied and Theorem
 \ref{sec.two_1:thmLiftMain} provides us with the
 Carnot group  $\G = (\R^4,\star,d_\lambda)$ where
 \begin{equation*}
    d_\lambda(x_1,x_2,\xi_1,\xi_2)
    = (\lambda x_1,\lambda^3 x_2, \lambda\,\xi_1,\lambda^2\,\xi_2),
  \end{equation*}
 and the composition law $(x_1,x_2,\xi_1,\xi_2)\star(y_1,y_2,\eta_1,\eta_2)$ is
 \begin{equation*}
  \begin{split}
    (x_1+y_1, x_2 + y_2 + x_1(x_1+y_1)\eta_1+2x_1\eta_2,
   \xi_1+\eta_1, \xi_2+\eta_2+1/2(x_1\eta_1-y_1\xi_1) ).
   \end{split}
  \end{equation*}
  The vector fields $Z_1,Z_2$ lifting $X_1$ and $X_2$ are
 \begin{equation*}
   Z_1 = \de_{x_1}-\frac{\xi_1}{2}\,\de_{\xi_2},
   \qquad Z_2 = x_1^2\,\de_{x_2}+\de_{\xi_1}+\frac{x_1}{2}\,\de_{\xi_2}.
\end{equation*}
  The sub-Laplacian $\Lop_\G = Z_1^2+Z_2^2$ lifts
  the (3-step) Grushin-type operator $\Lop= X_1^2+X_2^2$.
 If $\Gamma_\G$ is the fundamental solution for $\Lop_\G$,
 by Theorem \ref{sec.two_2:thmassumptionI} the function
\begin{equation*}
   \begin{split}
   \Gamma(x;y) & = \int_{\R^2}\Gamma_\G ((x,0)^{-1}\star(y,\eta) )\,\d\eta\\
   &= \int_{\R^2}\Gamma_\G \Big(y_1-x_1,y_2-x_2 + x_1\eta_1(x_1-y_1)-2\,x_1\eta_2,
   \eta_1,\eta_2-\tfrac{1}{2}x_1\eta_1 \Big)\,\d\eta_1\d\eta_2,
   \end{split}
  \end{equation*}
  is the  unique fundamental solution for $\Lop$ vanishing at infinity.

  Furthermore, from Theorem \ref{sec.two_2:thmassumptionI}
  we derive that $\Gamma(x;y)$ is bounded from above and from below (up to
  two structural constants) by
\begin{equation*}
  \int_{\R^2} K^{-5} (x,y,\eta)\,\d \eta_1\d\eta_2,
\end{equation*}
   where the function $ K$ is
   \begin{align*}
    &K (x,y,\eta) =
    |y_1-x_1 |+ |
   y_2-x_2 + x_1\eta_1(x_1-y_1)
   -2\,x_1\eta_2 |^{1/3}+ |\eta_1 |+
    |\eta_2-\tfrac{1}{2}x_1\eta_1 |^{1/2}.
   \end{align*}
   In this case we are able to deduce that,
   for every fixed $x\in\R^2$, the function $\Gamma(x;\cdot)$ has a pole at $x$
   (see Proposition \ref{sec.one:prop_poleGamma}): indeed, for some constant $\mathbf{c}>0$
   \begin{align*}
    \liminf_{y\to x}\Gamma&(x,y) \geq\mathbf{c}^{-1}\,\int_{\R^2} \Big( |2x_1\eta_2 |^{1/3}
   + |\eta_1 |+
    |\eta_2-1/2x_1\eta_1 |^{1/2} \Big)^{-5}\,\d\eta = \infty.
   \end{align*}
 \end{ex}
 \begin{ex}[\textbf{An Engel-type operator}] \label{sec.th:ex_Engel}
  Let us now consider the vector fields on $\R^3$
   \begin{equation*}
    X_1 = \de_{x_1}, \quad X_2 = x_1\,\de_{x_2}+x_1^2\,\de_{x_3}.
   \end{equation*}
  They are homogeneous of degree $1$ with respect to
\begin{equation*}
\delta_\lambda(x_1,x_2,x_3) :=
     (\lambda x_1,\lambda^2 x_2,\lambda^3 x_3).
    \end{equation*}
 The Lie algebra $\mathfrak{a}$ generated by $X_1,X_2$ is $4$-dimensional
 and the dimensions of the layers of the stratification
 as in \eqref{sec.two_1:astratified} are, respectively, $2,1,1$.
 Assumptions (H1) and (H2)
 of Section \ref{sec.two} are satisfied and Theorem
 \ref{sec.two_1:thmLiftMain} provides us with the
 Carnot group  $\G = (\R^4,\star,d_\lambda)$ where
 \begin{equation*}
    d_\lambda(x_1,x_2,x_3,\xi)
    = (\lambda x_1,\lambda^2 x_2, \lambda^3 x_3,\lambda\,\xi),
  \end{equation*}
  while the composition $(x_1,x_2,x_3,\xi)\star(y_1,y_2,y_3,\eta)$ is
  \begin{equation*}
   (x_1+y_1, x_2+y_2+x_1\eta,
   x_3+y_3+2x_1y_2+x_1^2\eta, \xi+\eta).
  \end{equation*}
 $\G$ is isomorphic to the Engel group on $\R^4$.
  The vector fields $Z_1,Z_2$ lifting $X_1,X_2$ are
  \begin{equation*}
   Z_1 = \de_{x_1}, \qquad Z_2 = x_1\,\de_{x_2}+x_1^2\,\de_{x_3}+\de_\xi.
  \end{equation*}
 The sub-Laplacian $\Lop_\G = Z_1^2+Z_2^2$ lifts $\Lop= X_1^2+X_2^2$. If
 $\Gamma_\G$ is the fundamental solution of $\Lop_\G$,
 by Theorem \ref{sec.two_2:thmassumptionI} the function
\begin{equation*}
   \begin{split}
   \Gamma(x_1,x_2,x_3;y_1,y_2,y_3) &
   = \int_{\R} \Gamma_\G\Big(y_1-x_1,y_2-x_2 - x_1\eta,
   y_3-x_3+2x_1(x_2-y_2)+x_1^2\eta, \eta\Big)\,\d\eta
   \end{split}
 \end{equation*}
  is the  unique fundamental solution for $\Lop$ vanishing at infinity.

  Furthermore, from Theorem \ref{sec.two_2:thmassumptionI}
  we derive that $\Gamma(x;y)$ is bounded from above and from below (up to
  two structural constants) by
\begin{equation*}
   \int_{\R}
   \Big\{|y_1-x_1 |+
    | y_2-x_2 - x_1\eta |^{1/2}+ \Big|y_3-x_3+2x_1(x_2-y_2)+x_1^2\eta \Big|^{1/3}+
    |\eta |\Big\}^{-5}\,\d\eta.
\end{equation*}
   In this case we are able to deduce that,
   for every fixed $x\in\R^3$, the function $\Gamma(x;\cdot)$ has a pole at $x$
   (see Proposition \ref{sec.one:prop_poleGamma}): indeed, for some constant $\mathbf{c}>0$
\begin{align*}
   & \liminf_{y\to x}\Gamma(x;y) \geq
   \mathbf{c}^{-1}\,\int_{\R} ( |x_1\eta |^{1/2}
   + |x_1^2 \eta |^{1/2}+
    |\eta | )^{-5}\,\d\eta = \infty.
\end{align*}
\end{ex}
 \section{Appendix: Further qualitative properties of $\Gamma$}\label{sec:Appendix}
 We tacitly follow all the notation of the previous sections.
  \begin{prop} \label{sec.two_2:eq_GammaVanish}
 For every fixed $x \in \R^n$ one has
   \begin{itemize}
    \item[$\mathrm{(i)}$] $\Gamma(x;\cdot)$ is continuous on $\R^n\setminus\{x\}$;
    \item[$\mathrm{(ii)}$] $\Gamma(x;\cdot)$ vanishes at infinity.
   \end{itemize}
  \end{prop}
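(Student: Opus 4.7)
The plan is to apply Proposition \ref{sec.one:prop_contVanishGamma} with $\widetilde{\Gamma}=\Gamma_\G$, exploiting that $\Lop_\G$ is a saturable Lifting of $\LL$ (Theorem \ref{sec.two_1:thm_regliftLopX}) and that the integrability hypotheses of Theorem \ref{sec.one:thm_Main} have already been checked in Theorem \ref{sec.two_2:thmassumptionI}. Continuity of $(y,\eta)\mapsto \Gamma_\G(x,0;y,\eta)$ away from $(x,0)$ and its vanishing at $\infty$ on $\RN$ are both immediate, since $\Gamma_\G(x,0;y,\eta)=\gamma_\G\bigl((x,0)^{-1}\star(y,\eta)\bigr)$ and left translation by $(x,0)^{-1}$ is a smooth homeomorphism of $\G$ sending $(x,0)$ to $0$ and $\infty$ to $\infty$, while $\gamma_\G\in C^\infty(\RN\setminus\{0\})$ vanishes at infinity (see Remark \ref{rem.sugamma}).

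The crux of the proof is therefore to verify hypothesis (B): for every fixed $x\in\R^n$, to produce a compact $K_x\subseteq\R^p$ and a nonnegative $g_x\in L^1(\R^p\setminus K_x)$ with $\Gamma_\G(x,0;y,\eta)\leq g_x(\eta)$ for \emph{all} $y\in\R^n$ and all $\eta\in\R^p\setminus K_x$. My strategy is to start from the upper estimate $\Gamma_\G\leq \mathbf{c}\,h^{2-Q}$ in \eqref{sec.two_2:eq_mainEstimG} and apply the reverse pseudo-triangle inequality valid for any homogeneous norm on a Carnot group: there exists a structural constant $c_\G\geq 1$ such that
\begin{equation*}
  h\bigl((x,0)^{-1}\star(y,\eta)\bigr) \;\geq\; \frac{h(y,\eta)}{c_\G}-h(x,0) \;\geq\; \frac{N(\eta)}{c_\G}-h(x,0),
\end{equation*}
where $N(\eta)=\sum_{k=1}^p|\eta_k|^{1/\sigma^*_k}$ as in \eqref{sec.two_1:eq_defnormN} and I used the obvious bound $h(y,\eta)\geq N(\eta)$. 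Setting $K_x:=\{\eta\in\R^p: N(\eta)\leq 2c_\G\,h(x,0)\}$, for $\eta\notin K_x$ the right-hand side is at least $N(\eta)/(2c_\G)$, \emph{uniformly in} $y\in\R^n$, which together with $2-Q<0$ yields
\begin{equation*}
  \Gamma_\G(x,0;y,\eta)\;\leq\; \mathbf{c}\,(2c_\G)^{Q-2}\,N(\eta)^{2-Q} \;=:\; g_x(\eta).
\end{equation*}

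To finish, I must check that $g_x\in L^1(\R^p\setminus K_x)$; this is a routine dyadic-annulus computation based on the $\delta^*_\lambda$-homogeneity of $N$ (just as in the derivation of \eqref{sec.two_2:eq_toProvefinal}), each annulus $\{2^{j-1}R\leq N<2^{j}R\}$ contributing a factor $2^{j(2-Q+q^*)}=2^{j(2-q)}$, and the resulting geometric series converges precisely because $q>2$ by assumption \eqref{sec.two_2:def_Qdelta}. I anticipate no serious obstacle; the only delicate ingredient is the reverse pseudo-triangle inequality invoked above, which however is a general feature of any homogeneous quasi-norm on a homogeneous Carnot group and is therefore available here without further work. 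Once (B) is established, assertions (i) and (ii) are delivered simultaneously by parts (a) and (b) of Proposition \ref{sec.one:prop_contVanishGamma}.
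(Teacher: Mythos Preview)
Your argument is sound, with one small caveat: when $x=0$ your choice $K_x=\{\eta:N(\eta)\leq 2c_\G\,h(x,0)\}$ degenerates to $\{0\}$, and then $g_x(\eta)=\text{const}\cdot N(\eta)^{2-Q}$ is \emph{not} in $L^1(\R^p\setminus\{0\})$ (the integral diverges near the origin, since the shell contributions behave like $2^{j(q-2)}$ as $j\to-\infty$). The fix is trivial---replace $K_x$ by $\{\eta:N(\eta)\leq 1+2c_\G\,h(x,0)\}$---so this does not affect the validity of your method.

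Your route is genuinely different from the paper's. The paper does \emph{not} invoke its own Proposition~\ref{sec.one:prop_contVanishGamma} here; instead it argues by direct Dominated Convergence after the change of variable $\eta=\Psi_{x,y}^{-1}(u)$ of Remark~\ref{sec.two_2:remChangeF}, which forces the last $p$ components of $(x,0)^{-1}\star(y,\eta)$ to equal $u$ and hence yields, via \eqref{sec.two_2:eq_defKernelK}, the majorant $\mathbf{c}\,N(u)^{2-Q}$ away from a compact in $u$, \emph{uniformly in both $x$ and $y$}. You bypass the change of variable altogether and instead exploit the reverse pseudo-triangle inequality for the homogeneous norm $h$ on $\G$ to produce a majorant in the original variable $\eta$. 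What the paper's device buys is uniformity in $x$ as well (later exploited in Proposition~\ref{sec.two_2:propWellGammaBis} for the uniform vanishing on compacts); what yours buys is that the abstract machinery of Proposition~\ref{sec.one:prop_contVanishGamma} is actually put to work, and no knowledge of the special structure \eqref{sec.two_2:eq_expressionconvmap} of the group law is required---only the general quasi-norm inequality.
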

  \begin{proof}
   These properties of $\Gamma$ are inherited from those of $\Gamma_\G$:
   it suffices to apply Dominated Convergence in the integral
   defining $\Gamma$, via the change of variable
    $\eta = \Psi_{x,y}^{-1}(u)$, with $u\in \R^p$.
   \end{proof}
  \begin{cor}\label{sec.two_2:corSmoothGamma}
   If $x \in \R^n$, we have
    $\Gamma(x;\cdot)\in C^\infty(\R^n\setminus\{x\},\R)$ and
    $\Lop\Gamma(x;\cdot) = 0$ out of $x$.
  \end{cor}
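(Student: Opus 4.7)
The plan is to reduce the statement to the hypoellipticity of $\LL$. From Theorem \ref{sec.two_2:thmassumptionI} we already know that $\Gamma(x;\cdot)$ is a fundamental solution of $\LL$ with pole at $x$, that is, $\LL \Gamma(x;\cdot) = -\mathrm{Dir}_x$ in $\mathcal{D}'(\R^n)$. Restricting this identity to the open set $\R^n\setminus\{x\}$ (where the Dirac mass has no effect), we obtain the homogeneous equation
$$\LL\,\Gamma(x;\cdot) = 0 \quad \text{in $\mathcal{D}'(\R^n\setminus\{x\})$.}$$

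Next I would invoke H\"ormander's theorem to conclude that $\LL = \sum_{j=1}^{m} X_j^2$ is $C^\infty$-hypoelliptic on $\R^n$. This requires the H\"ormander rank condition to hold at \emph{every} point of $\R^n$, not merely at the origin as stated in hypothesis (H2); this extension is exactly the content of Remark \ref{sec.two_1:remHormander}, which exploits the $\dela$-homogeneity of $X_1,\ldots,X_m$ to propagate the rank condition from $0$ to all of $\R^n$ via a conjugation argument. Once hypoellipticity is at our disposal, applying it to the distribution $\Gamma(x;\cdot)$ on $\R^n\setminus\{x\}$ yields a smooth representative on that open set; since $\Gamma(x;\cdot)$ is already continuous on $\R^n\setminus\{x\}$ by Proposition \ref{sec.two_2:eq_GammaVanish}(i), it coincides everywhere (not merely almost everywhere) with its smooth representative. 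Hence $\Gamma(x;\cdot) \in C^\infty(\R^n\setminus\{x\},\R)$, and the equation $\LL\,\Gamma(x;\cdot) = 0$ holds in the classical pointwise sense outside $x$.

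There is no real obstacle in this argument: the only point worth noting is that the rank condition must be secured pointwise on $\R^n$ before invoking H\"ormander's theorem, and this is why Remark \ref{sec.two_1:remHormander} is essential. All the heavy lifting is already carried out by the fundamental-solution property of Theorem \ref{sec.two_2:thmassumptionI} and by the classical hypoellipticity theorem.
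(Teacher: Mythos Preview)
Your proof is correct and follows essentially the same approach as the paper, which simply notes that the result is a standard consequence of the $C^\infty$-hypoellipticity of $\LL$ together with the continuity of $\Gamma(x;\cdot)$ out of $x$ (Proposition \ref{sec.two_2:eq_GammaVanish}). You are more explicit than the paper about why $\LL\Gamma(x;\cdot)=0$ holds distributionally on $\R^n\setminus\{x\}$ and about the role of Remark \ref{sec.two_1:remHormander} in securing the rank condition everywhere, but these are clarifications of the same argument rather than a different route.
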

  \begin{proof}
   It is a standard consequence of the
   $C^\infty$-hypoellipticity of $\LL$ (which is a H\"ormander operator) and of the continuity
   of $\Gamma(x;\cdot)$ out of $x$ (see Proposition \ref{sec.two_2:eq_GammaVanish}).
  \end{proof}
 \begin{lem}\label{app:lemIntegralG}
  The following properties hold true: \medskip

  \emph{(i)}\,\,the map $(x,y,\eta)\mapsto {\Gamma}_\G\big(x,0;y,\eta\big)$
  is locally integrable on $\R^n\times\RN$; \medskip

  \emph{(ii)}\,\,for every $y\in\R^n$, the map $(x,\eta)\mapsto {\Gamma}_\G\big(x,0;y,\eta\big)$
  is locally integrable on $\RN$.
 \end{lem}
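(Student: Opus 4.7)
The plan is to reduce both (i) and (ii) to the local integrability $\gamma_\G\in\Lloc{\RN}$ recalled in Remark \ref{rem.sugamma}, via changes of variable on $\RN$ whose Jacobian has unit absolute value.

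For (i), since $\gamma_\G\geq 0$, Tonelli's theorem applies: I fix $x\in\R^n$ and estimate the inner integral over $(y,\eta)$ on a compact set $K'\subset\RN$. The substitution $z=(x,0)^{-1}\star(y,\eta)$ is the left translation $L_{(x,0)^{-1}}$ on the Carnot group $\G$, which preserves Lebesgue measure on $\RN$. The image $L_{(x,0)^{-1}}(K')$ is compact and depends continuously on $x$, hence for $x$ ranging over a compact set $K_1\subset\R^n$ the union $\bigcup_{x\in K_1}L_{(x,0)^{-1}}(K')$ is contained in a fixed compact set $K''\subset\RN$. Therefore
$$\int_{K'}\gamma_\G\bigl((x,0)^{-1}\star(y,\eta)\bigr)\,\d y\,\d\eta \leq \int_{K''}\gamma_\G(z)\,\d z<\infty$$
uniformly in $x\in K_1$, and integrating over $x\in K_1$ proves (i).

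For (ii) I fix $y\in\R^n$ and consider $F_y:\RN\to\RN$, $F_y(x,\eta):=(x,0)^{-1}\star(y,\eta)$. The central claim is that $F_y$ is a $C^\infty$-diffeomorphism with $|\det dF_y|\equiv 1$. Granting this, for every compact $K\subset\RN$ the change of variable gives
$$\int_K\Gamma_\G(x,0;y,\eta)\,\d x\,\d\eta=\int_{F_y(K)}\gamma_\G(w)\,\d w<\infty,$$
since $F_y(K)$ is compact and $\gamma_\G\in\Lloc{\RN}$.

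The main technical step (and the only real obstacle) is the structural claim on $F_y$; the argument relies on the explicit formulas \eqref{sec.two_2:eq_expressionconvmap}. There, the components of $w=F_y(x,\eta)$ have the form $w_i=y_i-x_i+p_i(x,y,\eta)$ for $i\leq n$ and $w_{n+k}=\eta_k+q_k(x,y,\eta)$ for $k\leq p$, where the polynomials $p_i$ and $q_k$ depend only on coordinates of $(x,y,\eta)$ of $d_\lambda$-degree strictly less than $\sigma_i$ and $\sigma^*_k$ respectively. Ordering the $N$ input variables $(x_1,\ldots,x_n,\eta_1,\ldots,\eta_p)$ and the $N$ output variables $w_j$ by $d_\lambda$-degree, the Jacobian matrix $dF_y$ becomes block lower triangular with diagonal blocks equal to $\pm I$ (within a single degree block all off-diagonal entries vanish because $p_i$ and $q_k$ cannot depend on variables of equal degree). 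Consequently $|\det dF_y|\equiv 1$. Bijectivity follows by solving $(y,\eta)=(x,0)\star w$ for $(x,\eta)$ coordinate-by-coordinate in ascending $d_\lambda$-degree: the same triangular structure (applied to the group law $\star$) determines each new coordinate of $x$ or $\eta$ from already-computed lower-degree coordinates, yielding $F_y^{-1}$ explicitly as a polynomial map.
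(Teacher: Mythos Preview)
Your proof is correct and follows essentially the same route as the paper: part (i) via left translation and the uniform compact container $(K_1\times\{0\})^{-1}\star K'$, and part (ii) via the map the paper calls $\mathrm{C}_y$, which is your $F_y$. The paper is terser---it merely asserts that $\mathrm{C}_y$ is a diffeomorphism ``easily deduced'' from \eqref{sec.two_2:eq_expressionconvmap} and uses only boundedness of the Jacobian on compacta---whereas you supply the triangular-structure argument explicitly and obtain the stronger fact $|\det dF_y|\equiv 1$; this extra precision is correct but not needed for the statement.
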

 \begin{proof}
  (i) Let $K_1\subseteq\R^n$ and $K_2\subseteq\RN$
  be compact sets.
  By Fubini's Theorem and by
  the change of variable $(y,\eta) = (x,0)\star(z,\zeta)$, we get
  \begin{align*}
   \int_{K_1\times K_2}{\Gamma}_\G\big(x,0;y,\eta\big)\,\d x\,\d y\,\d\eta
   \stackrel{\eqref{sec.two_2:eq_defGLopG}}{=}
   \int_{K_1}\left(\int_{\tau_{x}^{-1}(K_2)}\gamma_\G(z,\zeta)
   \,\d z\,\d \zeta\right)\d x,
  \end{align*}
  where $\tau_{x}$ denotes the left-translation
  on $\G$ by $(x,0)$.
  We now observe that, for every $x\in K_1$, the set
  $\tau_{x}^{-1}(K_2)$ is in the compact set
  $H = \big(K_1\times\{0\}\big)^{-1}\star K_2$; hence, by recalling that
  $\gamma_\G$ is locally integrable on $\RN$, we obtain
  property (i).\medskip

  (ii) We fix a point $y\in\R^n$ and a compact set
  $K\subseteq \R^n$, and we set
  $$\mathrm{C}_y:\RN\longto\RN, \qquad \mathrm{C}_y(x,\eta) :=
  (x,0)^{-1}\star(y,\eta).$$
  It can be easily deduced from \eqref{sec.two_2:eq_expressionconvmap}
  that $\mathrm{C}_y$ is a $C^\infty$-diffeomorphism of
  $\RN$ onto itself; hence,
  by change of variable
  $(x,\eta) = \mathrm{C}_y^{-1}(z,\zeta)$, we get
  \begin{align*}
   \int_{K}{\Gamma}_\G\big(x,0;y,\eta\big)\,\d x\,\d\eta
 = \int_{\mathrm{C}_y(K)}{\gamma}_\G(z,\zeta)\,
   \det\big(\mathcal{J}_{\mathrm{C}_y}(z,\zeta)\big)\,\d z\,\d\zeta.
  \end{align*}
  Since $\mathrm{C}_y(K)$ is compact and since ${\Gamma}_0\in\Lloc{\R^N}$, we get (ii).
 \end{proof}
 \begin{prop}\label{app:propIntGamma}
  The function $\Gamma$ satisfies the following properties: \medskip

  \emph{(i)}\,\,$\Gamma\in \Lloc{\R^n\times\R^n}$; \medskip

  \emph{(ii)}\,\,for every $y \in\R^n$, we have $\Gamma(\cdot;y)\in\Lloc{\R^n}$.
 \end{prop}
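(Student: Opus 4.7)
The plan is to reduce both claims to Lemma \ref{app:lemIntegralG} via a Tonelli argument, the measure-preserving substitution $v=\Psi_{x,y}(\eta)$ of Remark \ref{sec.two_2:remChangeF}, and a splitting of the $v$-integration analogous to the one used in the proof of Theorem \ref{sec.two_2:thmassumptionI}(ii).

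For (i), let $K_1,K_2\subseteq\R^n$ be compact. Since $\Gamma_\G\geq 0$, Tonelli's theorem gives
\[
  \int_{K_1\times K_2}\Gamma(x;y)\,\d x\,\d y = \int_{K_1\times K_2\times\R^p}\Gamma_\G(x,0;y,\eta)\,\d x\,\d y\,\d\eta,
\]
and for every fixed $(x,y)$ the substitution $\eta\mapsto v=\Psi_{x,y}(\eta)$ (with Jacobian $\equiv 1$ by \eqref{sec.two_2:eq_Jacobianone}) turns the integrand into $\Gamma_\G(x,0;y,\Psi_{x,y}^{-1}(v))$. I then decompose $\R^p=\{N(v)\leq 1\}\cup\{N(v)>1\}$, where $N$ is the homogeneous norm of \eqref{sec.two_1:eq_defnormN}. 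On $\{N(v)>1\}$, the bound $\Gamma_\G\leq\mathbf{c}\,h^{2-Q}$ combined with the inequality $K(x,y,\Psi_{x,y}^{-1}(v))\geq N(v)$ already used in Theorem \ref{sec.two_2:thmassumptionI}(i) yields
\[
  \int_{K_1\times K_2\times\{N(v)>1\}}\Gamma_\G\,\d x\,\d y\,\d v \leq \mathbf{c}\,|K_1|\,|K_2|\int_{\{N(v)>1\}}\!N^{2-Q}(v)\,\d v,
\]
which is finite by the dyadic-annuli computation already carried out in the proof of Theorem \ref{sec.two_2:thmassumptionI}.

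The technically delicate part is the contribution from $\{N(v)\leq 1\}$. Here I plan to invert the substitution and argue that the set
\[
  \bigl\{(x,y,\eta)\in K_1\times K_2\times\R^p : N(\Psi_{x,y}(\eta))\leq 1\bigr\}
\]
is contained in a product $K_1\times K_2\times L$ with $L\subset\R^p$ compact; granting this, the contribution is bounded by $\int_{K_1\times K_2\times L}\Gamma_\G(x,0;y,\eta)\,\d x\,\d y\,\d\eta$, which is finite by Lemma \ref{app:lemIntegralG}(i). The key structural input is the triangular polynomial form of $\Psi_{x,y}$, whose $k$-th component is $\eta_k+q_k(x,y,\eta_1,\ldots,\eta_{k-1})$ with $q_k$ a polynomial (Remark \ref{sec.two_2:remChangeF}). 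An iterative solve in $k=1,2,\ldots,p$ then shows that $|v_k|\leq 1$ for every $k$, together with $(x,y)\in K_1\times K_2$, forces each $|\eta_k|$ to remain within a bound depending only on $K_1$ and $K_2$, which delivers the compact set $L$.

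For (ii), the same template applies with $y$ fixed: I write $\int_K\Gamma(x;y)\,\d x=\int_{K\times\R^p}\Gamma_\G(x,0;y,\eta)\,\d x\,\d\eta$, perform the substitution $v=\Psi_{x,y}(\eta)$, and split. The tail $\{N(v)>1\}$ is bounded by $\mathbf{c}\,|K|\int_{\{N(v)>1\}}N^{2-Q}(v)\,\d v<\infty$, while the compact part is bounded by $\int_{K\times L}\Gamma_\G(x,0;y,\eta)\,\d x\,\d\eta$, with $L$ obtained as above (now with $y$ held fixed); this is finite by Lemma \ref{app:lemIntegralG}(ii). The main obstacle in the whole argument is therefore the uniform-compactness claim for the preimage $\Psi_{x,y}^{-1}(\{N(v)\leq 1\})$ as $(x,y)$ varies in a compact set — this is the only place where one must exploit more than the two ingredients (\emph{decay of $\Gamma_\G$ at infinity} via $h^{2-Q}$ and \emph{local integrability of $\Gamma_\G$}) already at our disposal from the earlier sections.
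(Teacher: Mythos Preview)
Your proposal is correct and follows the same route as the paper (same substitution $v=\Psi_{x,y}(\eta)$, same splitting by $N(v)$, same tail estimate via $K\geq N$, same appeal to Lemma \ref{app:lemIntegralG} for the bounded part). The only remark is that what you flag as the ``main obstacle'' dissolves if, as the paper does, you assemble the fiberwise substitutions into a single global diffeomorphism $\Phi(x,y,\eta)=(x,y,\Psi_{x,y}(\eta))$ of $\R^n\times\R^N$ (respectively $\Phi_y(x,\eta)=(x,\Psi_{x,y}(\eta))$ of $\R^N$) with Jacobian $\equiv 1$: since $\Phi$ is a homeomorphism, $\Phi^{-1}\big(K_1\times K_2\times\overline{\{N\leq 1\}}\big)$ is automatically compact, so the uniform bound on $\eta$ comes for free and your triangular inductive solve is not needed.
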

 \begin{proof}
  (i) Let $K_1,K_2\subseteq\R^n$ be compact sets and let
   $\Phi$ be the map defined as follows:
   $$\Phi: \R^n\times\RN\longto\R^n\times\RN,
  \qquad \Phi(x,y,\eta) := \big(x,y,\Psi_{x,y}(\eta)\big).$$
  As pointed out in Remark
  \ref{sec.two_2:remChangeF},
  $\Psi_{x,y}$ is a smooth diffeomorphism
  of $\R^n$ onto itself, and the map
  $(x,y,\eta)\mapsto\Psi_{x,y}(\eta)$ is smooth on $\R^n\times\RN$; hence,
  $\Phi$ defines a smooth diffeomorphism
  and
  $$\text{$\det\big(\mathcal{J}_{\Phi}(x,y,\eta)\big)=1$
  for every $(x,y,\eta)\in\R^n\times\RN$.}$$
  From this, by Fubini's Theorem and
  by the change of variable
  $(x,y,\eta) = \Phi^{-1}(u,v,\nu)$, we get
  \begin{align*}
   \int_{K_1\times K_2}\Gamma(x;y)\,\d x\,\d y
   & = \int_{K_1\times K_2\times\R^p}{\Gamma}_\G\big(u,0;v,\Psi_{u,v}^{-1}(\nu)\big)\,
   \,\d u\,\d v\,\d\nu \\
   & = \int_{K_1\times K_2\times\{N(\nu)<1\}}\{\ldots\}  + \int_{K_1\times K_2\times\{N(\nu)\geq 1\}}\{\ldots\}   =:  \mathrm{I} +  \mathrm{II},
  \end{align*}
  where, as usual, we have set $N(\nu) = \sum_{k=1}^p|\nu_k|^{1/s_{j_k}}$.

  First of all we observe that, since the product
   $K_1\times K_2\times\{N(\nu)<1\}$ is bounded and since
   ${\Gamma}_\G$ is locally integrable
   (Lemma \ref{app:lemIntegralG}), then $ \mathrm{I}$ is finite.
   As for $ \mathrm{II}$ we notice that, from
   \eqref{sec.two_2:eq_mainEstimG} and the definition of $\Psi_{u,v}$, we get
    \begin{align*}
      \mathrm{II} &
     \stackrel{\eqref{sec.two_2:eq_mainEstimG}}{\leq}
      {c}\,\int_{K_1\times K_2\times\{N(v)\geq 1\}}
      {K}^{2-Q}\big(u,(v,\Psi_{u,v}^{-1}(\nu))\big)\,\d u\,\d v\,\d\nu \\
     & \stackrel{\eqref{sec.two_2:eq_defKernelK}}{\leq}
      {c}\,\int_{K_1\times K_2\times\{N(v)\geq 1\}}
    \left(\sum_{k = 1}^p
    \big|F_{n+k}\big(u,(v,\Psi_{u,v}^{-1}(\nu))\big)
    \big|^{\frac{1}{s_{j_k}}}\right)^{2-Q}\,\d u\,\d v\,\d\nu \\
    &\, \stackrel{\eqref{sec.two_2:def_mapPsixy}}{=}
  {c}\,\int_{K_1\times K_2\times\{N(\nu)\geq 1\}}N^{2-Q}(\nu)
    \,\d u\,\d v\,\d\nu.
    \end{align*}
   The finiteness of $\mathrm{II}$ follows from the integrability of $N^{2-Q}$ on $\{\nu\in\R^p:N(\nu)\geq 1\}$.\medskip

    (ii) Let $y\in\R^n$ and let $K\subseteq\R^n$ be a compact set.
    We consider the map
    $$\Phi_y:\RN\longto\RN, \qquad \Phi_y(x,\eta) :=
    \big(x,\Psi_{x,y}(\eta)\big).$$
    By arguing as above, one recognizes that $\det\big(\mathcal{J}_{\Phi_y}(x,\eta)\big) = 1$;
    therefore, by the change of variable $(x,\eta) = \Phi_y^{-1}(u,v)$, we get
    \begin{align*}
     \int_K\Gamma(x;y)\,\d x &= \int_{K\times\R^p}{\Gamma}_\G\big(u,0;y,\Psi_{u,y}^{-1}(v)\big)\,\d u\,\d v \\
    & = \int_{K\times\{N(v)<1\}}\{\ldots\} +
    \int_{K\times\{N(v)\geq 1\}}\{\ldots\}=:\mathrm{I}+ \mathrm{II}.
    \end{align*}
   Since $K\times\{N(v)<1\}$ is bounded, by Lemma \ref{app:lemIntegralG}-(ii) we infer that
   $ \mathrm{I}$ is finite; the finiteness of  $ \mathrm{II}$ can be proved as above.
 \end{proof}
  \begin{prop}\label{sec.two_2:propWellGammaBis}
   The following facts hold true: \medskip

   \emph{(i)}\,\,setting $\mathcal{O} := \{(x,y)\in\R^n\times\R^n:x \neq y\}$, then
   $\Gamma\in C(\mathcal{O},\R)$; \medskip

   \emph{(ii)}\,\,for every compact set $K\subseteq\R^n$, then
   $$\lim_{\|y\|\to\infty}\Gamma(x;y) = 0,
   \quad \text{\emph{uniformly} for $x\in K$};$$

   \emph{(iii)}\,\,for every fixed $y \in \R^n$, the function
   $\Gamma(\cdot;y)$ vanishes at infinity.
  \end{prop}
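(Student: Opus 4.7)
The plan is to handle the three claims in order, with (ii) building on (i) and (iii) reducing to (ii) via symmetry. Throughout, I would lean on the integral representation of $\Gamma$, on the two-sided bound $\mathbf{c}^{-1}h^{2-Q}\le \Gamma_\G\le \mathbf{c}\,h^{2-Q}$ with $h$ as in \eqref{sec.two_2:eq_defcanonicalh}, and on the triangular, unit-Jacobian change of variable $\eta=\Psi_{x,y}^{-1}(u)$ of Remark \ref{sec.two_2:remChangeF}, which depends polynomially on $(x,y,u)$.

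For (i), I would fix $(x_0,y_0)\in\mathcal{O}$ and a compact neighborhood $V\subset\mathcal{O}$, and then apply dominated convergence to
\[
\Gamma(x;y)=\int_{\R^p}\Gamma_\G\bigl(x,0;y,\Psi_{x,y}^{-1}(u)\bigr)\,\d u,\qquad (x,y)\in V.
\]
Pointwise convergence of the integrand as $(x,y)\to (x_0,y_0)$ follows from the continuity of $\Gamma_\G$ off the pole (which is avoided precisely because $x\neq y$) together with the smooth dependence of $\Psi_{x,y}^{-1}(u)$ on $(x,y,u)$. To produce an $L^1$-dominant, I would set $h_{x,y}(u):=h\bigl((x,0)^{-1}\star(y,\Psi_{x,y}^{-1}(u))\bigr)$; for $N(u)\ge 1$ one has $h_{x,y}(u)\ge N(u)$, exactly as in the proof of Theorem \ref{sec.two_2:thmassumptionI}, while on the compact set $V\times\{N(u)\le 1\}$ the jointly continuous function $h_{x,y}$ is strictly positive (it vanishes only when $(x,0)^{-1}\star(y,\Psi_{x,y}^{-1}(u))=0$, which forces $x=y$), hence bounded below by some $c_V>0$. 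These two estimates combine into a uniform upper bound $\mathbf{c}\bigl(c_V^{2-Q}\mathbf{1}_{\{N(u)\le 1\}}+N(u)^{2-Q}\mathbf{1}_{\{N(u)>1\}}\bigr)$, which is integrable over $\R^p$ by the dyadic-annuli argument already used in Theorem \ref{sec.two_2:thmassumptionI}.

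For (ii), I would exploit the $\dela$-homogeneity \eqref{sec.two_2:eq_GammaHomogeneous}. Introducing the $\dela$-homogeneous gauge $\rho(x):=\sum_{i=1}^n|x_i|^{1/\sigma_i}$ and setting $M:=\sup_{x\in K}\rho(x)$, for any $y$ with $\lambda:=\rho(y)\ge 2M$ I would put $\hat x:=\delta_{1/\lambda}(x)$, $\hat y:=\delta_{1/\lambda}(y)$, so that homogeneity yields $\Gamma(x;y)=\lambda^{2-q}\,\Gamma(\hat x;\hat y)$. The set $\Sigma:=\{(\hat x,\hat y):\rho(\hat x)\le 1/2,\ \rho(\hat y)=1\}$ is a compact subset of $\mathcal{O}$ (since $\rho(\hat x)<\rho(\hat y)$ forces $\hat x\neq\hat y$), so by part (i) there is $M_0$ with $|\Gamma|\le M_0$ on $\Sigma$; hence $|\Gamma(x;y)|\le M_0\,\rho(y)^{2-q}\to 0$ uniformly in $x\in K$ as $\|y\|\to\infty$, the two conditions $\rho(y)\to\infty$ and $\|y\|\to\infty$ being equivalent. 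Finally, (iii) reduces immediately to the symmetry $\Gamma(x;y)=\Gamma(y;x)$ of \eqref{sec.two_2:eq_GammaHomogeneousSYMM} combined either with (ii) applied to $K=\{y\}$, or directly with Proposition \ref{sec.two_2:eq_GammaVanish}(ii).

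The step I expect to be the main obstacle is the construction of the uniform $L^1$-dominant in (i): one really needs the joint continuity in $(x,y,u)$ of $h_{x,y}(u)$ and its non-vanishing on the compact slab $V\times\{N(u)\le 1\}$, both of which trace back to the polynomial form of $\Psi_{x,y}^{-1}$ and to the implication $(x,0)^{-1}\star(y,\eta)=0\Rightarrow x=y$. It is precisely this implication that lets the hypothesis $V\subset\mathcal{O}$ upgrade the pointwise estimates of Theorem \ref{sec.two_2:thmassumptionI} to the uniform ones needed here.
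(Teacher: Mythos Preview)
Your argument for (i) is essentially the paper's own: a dominated-convergence argument after the unit-Jacobian change of variable $\eta=\Psi_{x,y}^{-1}(u)$, splitting according to $\{N(u)\le 1\}$ versus $\{N(u)>1\}$ and using the implication $(x,0)^{-1}\star(y,\eta)=0\Rightarrow x=y$ on the compact slab.

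For (ii) you take a genuinely different route. The paper reuses dominated convergence together with the uniform estimate $\Gamma_\G(x,0;y,\Psi_{x,y}^{-1}(u))\le \mathbf{c}\,N(u)^{2-Q}$ and the fact that $(x,0)^{-1}\star(y,\Psi_{x,y}^{-1}(u))\to\infty$ as $\|(y,u)\|\to\infty$ uniformly in $x\in K$. Your scaling argument, using the joint homogeneity \eqref{sec.two_2:eq_GammaHomogeneous} to reduce to a compact subset $\Sigma\subset\mathcal{O}$ on which (i) gives a uniform bound, is correct and pleasantly short; it buys you a cleaner proof at the price of depending on (i) rather than being self-contained.

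The real issue is (iii). You invoke the symmetry $\Gamma(x;y)=\Gamma(y;x)$ of \eqref{sec.two_2:eq_GammaHomogeneousSYMM}, but in the paper's logical order the symmetry is Theorem~\ref{app:thmSymGamma}, proved \emph{after} Proposition~\ref{sec.two_2:propWellGammaBis} and \emph{using} part (iii) of it: in that proof one needs $u_x(y)=\Gamma(x;y)-\Gamma(y;x)$ to vanish as $\|y\|\to\infty$, which requires precisely that $\Gamma(\cdot;x)$ vanishes at infinity. So your reduction is circular. The paper instead proves (iii) directly, by the same dominated-convergence mechanism as in (ii), after noting that $(x,0)^{-1}\star(y,\Psi_{x,y}^{-1}(u))\to\infty$ as $\|(x,u)\|\to\infty$ for fixed $y$. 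Your own scaling idea would fix this with no extra work: rerun the argument for (ii) with the roles of $x$ and $y$ swapped, using the compact set $\Sigma'=\{(\hat x,\hat y):\rho(\hat x)=1,\ \rho(\hat y)\le 1/2\}\subset\mathcal{O}$, to get $|\Gamma(x;y)|\le M_0'\,\rho(x)^{2-q}\to 0$ as $\|x\|\to\infty$.
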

  \begin{proof}
  (i) follows by a standard argument of dominated convergence, by the use of
  the change of variable $\Psi_{x,y}$ and of the level sets of the homogeneous norm $N$;
  obviously, the continuity of $\Gamma_\G$ out of the diagonal of $\RN\times\RN$ is exploited.

  (ii) follows again by dominated convergence; we also invoke the estimate (valid for $K\subseteq\R^n$ compact, and uniformly for $y\in \R^n$)
  $$\sup_{x\in K}{\Gamma}_\G\big(x,0;y,\Psi_{x,y}^{-1}(u)\big)\leq N^{2-Q}(u),\qquad \text{for every $u\in\R^p$}.$$
 Note that
  $$\lim_{\|(y,u)\|\to\infty}
    (x,0)^{-1}\star(y,\Psi_{x,y}^{-1}(u))\to\infty \quad
  \text{uniformly for $x\in K$}.$$

  (iii) follows by arguing as in (ii), once one has observed that
  $$\lim_{\|(x,u)\|\to\infty}(x,0)^{-1}\star(y,\Psi_{x,y}^{-1}(u))
  = \infty.$$
 This ends the proof.
\end{proof}
  \begin{thm}[$\Gamma$ right-inverts $\Lop$]\label{app:thmGammaRInverts}
   For every $\varphi\in C_0^\infty(\R^n,\R)$, the function
   \begin{equation} \label{app:eq_defLambdaPhi}
    \Lambda_\varphi:\R^n\longto\R \qquad \Lambda_\varphi(y) :=
    \int_{\R^n}\Gamma(x;y)\,\varphi(x)\,\d x
   \end{equation}
   is well-defined and it satisfies the following properties: \medskip

   \emph{(i)}\,\,$\Lambda_\varphi\in \Lloc{\R^n}$; \medskip

   \emph{(ii)}\,\,$\Lambda_\varphi
   \in C\big(\R^n\setminus\mathrm{supp}(\varphi),\R\big)$
   and it vanishes at infinity; \medskip

   \emph{(iii)}\,\,$\Lop\big(\Lambda_\varphi\big) = -\varphi$ in the weak sense
   of distributions on $\R^n$.
  \end{thm}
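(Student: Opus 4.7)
The plan is to deduce each of (i), (ii), (iii) from properties of $\Gamma$ already established—particularly the local integrability of $\Gamma$ on $\R^n\times\R^n$ (Proposition \ref{app:propIntGamma}(i)) and the uniform-on-compact-sets vanishing of $\Gamma(x;y)$ as $y\to\infty$ (Proposition \ref{sec.two_2:propWellGammaBis}(ii))—together with the fact that $\Lop$ is formally self-adjoint. This latter fact follows immediately from Remark \ref{sec.two_1:remHomX}: the coefficient of $\de_{x_k}$ in $X_j$ is independent of $x_k,\ldots,x_n$, so each $X_j$ is divergence-free on $\R^n$ and therefore $\Lop^*=\Lop$.

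For (i), fix a compact $K\subseteq\R^n$; since $\mathrm{supp}(\varphi)$ is compact, the integrand $(x,y)\mapsto\Gamma(x;y)\varphi(x)$ lies in $L^1(\mathrm{supp}(\varphi)\times K)$ by Proposition \ref{app:propIntGamma}(i). Fubini then makes $\Lambda_\varphi$ well-defined a.e.\ on $K$ and puts it in $L^1(K)$. For (ii), if $y_0\notin\mathrm{supp}(\varphi)$, choose a small neighborhood $U\ni y_0$ whose closure is disjoint from $\mathrm{supp}(\varphi)$; on the compact set $\mathrm{supp}(\varphi)\times\overline{U}$ the function $\Gamma$ is continuous (Proposition \ref{sec.two_2:propWellGammaBis}(i)) and therefore bounded, so dominated convergence yields continuity of $\Lambda_\varphi$ on $U$. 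The uniform decay in Proposition \ref{sec.two_2:propWellGammaBis}(ii) then gives
\begin{equation*}
|\Lambda_\varphi(y)|\leq\|\varphi\|_{\infty}\int_{\mathrm{supp}(\varphi)}\Gamma(x;y)\,\d x\longrightarrow 0\qquad\text{as $\|y\|\to\infty$.}
\end{equation*}

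The substantive point is (iii). I test against an arbitrary $\psi\in C_0^{\infty}(\R^n)$ and write
\begin{equation*}
\int_{\R^n}\Lambda_\varphi(y)\,\Lop\psi(y)\,\d y
=\int_{\R^n\times\R^n}\Gamma(x;y)\,\varphi(x)\,\Lop\psi(y)\,\d x\,\d y,
\end{equation*}
where Fubini is applicable because $\varphi(x)\,\Lop\psi(y)$ is bounded with compact support in both variables and $\Gamma\in L^1_{\mathrm{loc}}(\R^n\times\R^n)$. Exchanging the order of integration, for each fixed $x$ the inner $y$-integral becomes $\int_{\R^n}\Gamma(x;y)\,\Lop^{*}\psi(y)\,\d y=-\psi(x)$ by the defining property of the fundamental solution (Definition \ref{sec.one:defi_Green}(i)) together with $\Lop^{*}=\Lop$. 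Hence the double integral equals $-\int_{\R^n}\varphi(x)\psi(x)\,\d x$, i.e.,
\begin{equation*}
\langle\Lop\Lambda_\varphi,\psi\rangle=\langle\Lambda_\varphi,\Lop\psi\rangle=-\langle\varphi,\psi\rangle\qquad\forall\,\psi\in C_0^{\infty}(\R^n),
\end{equation*}
which is exactly $\Lop\Lambda_\varphi=-\varphi$ in $\mathcal{D}'(\R^n)$.

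No serious obstacle is anticipated: the whole argument reduces to dominated convergence and two applications of Fubini, each justified by the local integrability of $\Gamma$ and the compactness of the supports of $\varphi$ and $\psi$. The only point deserving slight care is the self-adjointness $\Lop^{*}=\Lop$, which however is reduced above to a direct inspection of the coefficients of the $X_j$.
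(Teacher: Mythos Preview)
Your proposal is correct and follows exactly the route the paper indicates: the paper's own proof is a single sentence stating that the result ``follows in a standard way, by using Propositions \ref{app:propIntGamma} and \ref{sec.two_2:propWellGammaBis},'' and you have faithfully expanded those standard details (Fubini via $\Gamma\in L^1_{\mathrm{loc}}(\R^n\times\R^n)$ for (i) and (iii), dominated convergence via the joint continuity and uniform decay of $\Gamma$ for (ii), and the self-adjointness of $\Lop$ to close (iii)).
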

  \begin{proof}
   It follows in a standard way, by using
   Propositions \ref{app:propIntGamma} and \ref{sec.two_2:propWellGammaBis}.
  \end{proof}
  \begin{cor} \label{app:corGammaRightInv}
   For every $\varphi \in C_0^{\infty}(\R^n,\R)$, one has
   \begin{equation} \label{app:eq_LambdaLphi}
    \Lambda_{\Lop\varphi}(y) = \int_{\R^n}\Gamma(x;y)\,\Lop\varphi(x)\,\d x
    = -\varphi(y), \quad \text{a.e.\,on $\R^n$}.
   \end{equation}
  \end{cor}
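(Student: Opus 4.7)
The plan is to derive this corollary as a direct consequence of three ingredients already at our disposal: the existence of $\Gamma$ as a fundamental solution of $\Lop$ (Theorem~\ref{sec.two_2:thmassumptionI}), its symmetry $\Gamma(x;y)=\Gamma(y;x)$ (Proposition~\ref{sec.two_2:propGammaHom}), and the formal self-adjointness $\Lop^*=\Lop$. The last point, although only flagged in the footnote to Theorem~\ref{thm.riassuntivoooo}, is the only non-routine prerequisite to verify: I would argue from Remark~\ref{sec.two_1:remHomX} that, writing $X_j = \sum_k a_{k,j}(x)\,\de_{x_k}$, the coefficient $a_{k,j}$ is independent of $x_k,\ldots,x_n$ thanks to the $\dela$-homogeneity of degree $1$; hence $\de_{x_k}a_{k,j}\equiv 0$, so $X_j$ is divergence-free, $X_j^* = -X_j$, and therefore $\Lop = \sum_j X_j^2$ coincides with its formal adjoint.

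Once this is in place, the proof reduces to a one-line chain of equalities. Fixing $y\in\R^n$ and $\varphi\in C_0^\infty(\R^n,\R)$, I would write
$$\Lambda_{\Lop\varphi}(y) = \int_{\R^n}\Gamma(x;y)\,\Lop\varphi(x)\,\d x = \int_{\R^n}\Gamma(y;x)\,\Lop^*\varphi(x)\,\d x = -\varphi(y),$$
where the first equality is the definition \eqref{app:eq_defLambdaPhi}, the second uses the symmetry of $\Gamma$ together with $\Lop^*=\Lop$, and the third is the defining identity of the fundamental solution $\Gamma(y;\cdot)$ with pole $y$, as in Definition~\ref{sec.one:defi_Green}(i). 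The absolute convergence of the middle integrals is furnished by Proposition~\ref{app:propIntGamma}(ii) (local integrability of $\Gamma(\cdot;y)$) combined with the compact support of $\Lop\varphi$, so each step is rigorous.

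I would finally note that this chain is valid for \emph{every} single $y\in\R^n$, which is a pointwise statement strictly stronger than the ``a.e.'' assertion in the corollary. There is essentially no substantial obstacle to this argument: the whole body of work (the Folland lifting, the change of variable producing a genuine projection, the saturable-lifting theorem, and the verification of the integrability hypotheses yielding $\Gamma$) has already been spent to secure the three ingredients above, and this last corollary is merely a formal repackaging of the fundamental-solution identity under the self-adjointness symmetry.
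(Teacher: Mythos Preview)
Your argument is circular within the paper's logical structure. You invoke the symmetry $\Gamma(x;y)=\Gamma(y;x)$ from Proposition~\ref{sec.two_2:propGammaHom}, but the proof of that symmetry is explicitly postponed to the Appendix, where it appears as Theorem~\ref{app:thmSymGamma}---and that theorem is placed \emph{after} Corollary~\ref{app:corGammaRightInv} and, crucially, \emph{uses} identity~\eqref{app:eq_LambdaLphi} in its proof. So you are assuming the very result that this corollary is intended to help establish.

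The paper proceeds in the opposite order. First, Theorem~\ref{app:thmGammaRInverts} shows (via Fubini and the fundamental-solution identity for $\Gamma(x;\cdot)$) that $\Lop(\Lambda_\varphi)=-\varphi$ in $\mathcal{D}'(\R^n)$, together with the qualitative facts that $\Lambda_\varphi\in L^1_{\mathrm{loc}}$, is continuous off $\mathrm{supp}(\varphi)$, and vanishes at infinity. Applying this with $\Lop\varphi$ in place of $\varphi$ gives $\Lop(\Lambda_{\Lop\varphi})=-\Lop\varphi$ in distributions. Hypoellipticity then upgrades $\Lambda_{\Lop\varphi}$ to a smooth representative $h$; the function $u:=h+\varphi$ is $\Lop$-harmonic and vanishes at infinity, so the Strong Maximum Principle forces $u\equiv 0$, i.e.\ $\Lambda_{\Lop\varphi}=-\varphi$ a.e. Only \emph{then} is this identity fed into the proof of symmetry. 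Your chain of equalities would indeed be a one-liner once symmetry is available, but at this point in the development it is not; the substantive content of Corollary~\ref{app:corGammaRightInv} is precisely the maximum-principle/hypoellipticity argument that you have bypassed.
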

  \begin{proof}
   We know from Theorem
   \ref{app:thmGammaRInverts}
   that $\Lambda_{\Lop\varphi}\in\Lloc{\R^n}$ and that
   $\Lop(\Lambda_{\Lop\varphi})
   = -\Lop\varphi$ in the weak sense of distributions on $\R^n$;
   hence, since $\Lop$ is $C^\infty$-hypoelliptic, there exists
   $h\in C^\infty(\R^n,\R)$ such that
 $$\text{$\Lop h = -\Lop\varphi$ point-wise on $\R^n$ and $h = \Lambda_{\Lop\varphi}$ almost everywhere on $\R^n$.}$$
  Since $\Lambda_{\Lop\varphi}$
   is continuous outside $\mathrm{supp}(\varphi)$ (see Theorem \ref{app:thmGammaRInverts}), we have
   $h = \Lambda_{\Lop_\varphi}$ on $\R^n\setminus\mathrm{supp}(\varphi)$.
   As a consequence, the function $\Lambda_{\Lop_\varphi}$ vanishing at infinity,
   then the same is true of $h$.

   Let us now set $u := h + \varphi$. Obviously, we have
   $u\in C^\infty(\R^n,\R)$ and
   \begin{equation} \label{app:eq_LopuLopphi}
   \Lop u = \Lop h + \Lop\varphi = -\Lop\varphi+\Lop\varphi = 0,
  \quad \text{on $\R^n$};
  \end{equation}
  moreover, since $\varphi$ has compact support and $h$ vanishes at infinity,
  then
  \begin{equation} \label{app:eq_uvanishes}
  u(y)\to 0, \quad \text{as $\|y\|\to\infty$}.
  \end{equation}
  By \eqref{app:eq_LopuLopphi} and
  \eqref{app:eq_uvanishes}, we deduce from the Strong Maximum Principle
  for $\Lop$ that
  $u$ must vanish identically on $\R^n$, whence
 $h(y) = -\varphi(y)$ for every $y\in\R^n$.
  From this, by recalling that $h$ coincides almost everywhere with $\Lambda_\varphi$,
  we immediately obtain the desired identity
  \eqref{app:eq_LambdaLphi}, and the proof is complete.
  \end{proof}
  Thanks to all the results established so far, we can finally prove
  the symmetry of $\Gamma$.
  \begin{thm} [Symmetry of $\Gamma$] \label{app:thmSymGamma}
   The function $\Gamma$ is symmetric, that is,
   \begin{equation} \label{app:eq_SymGammaThm}
    \Gamma(x;y) = \Gamma(y;x), \quad \text{for every $x,y\in\R^n$ with $x\neq y$}.
   \end{equation}
  \end{thm}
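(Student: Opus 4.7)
The plan is to exploit the formal self-adjointness of $\LL$ (a consequence of Remark \ref{sec.two_1:remHomX}: every coefficient $a_{k,j}$ of $X_j=\sum_k a_{k,j}(x)\,\de_{x_k}$ is independent of $x_k$, so $X_j^{*}=-X_j$, and hence $\LL^{*}=\LL$) together with the standard uniqueness paradigm for vanishing-at-infinity solutions of $\LL u = -\psi$. Concretely, given $\psi\in C_0^\infty(\R^n,\R)$, I would compare the two \emph{potentials}
\[
 L_\psi(x):=\int_{\R^n}\Gamma(x;y)\,\psi(y)\,\d y,
 \qquad
 R_\psi(x):=\int_{\R^n}\Gamma(y;x)\,\psi(y)\,\d y = \Lambda_\psi(x),
\]
where $\Lambda_\psi$ is the function from Theorem \ref{app:thmGammaRInverts}. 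Both $L_\psi$ and $R_\psi$ are well-defined, thanks to Proposition \ref{sec.two_2:propWellGammaBis} and Proposition \ref{app:propIntGamma}, and Theorem \ref{app:thmGammaRInverts} already tells us that $\LL R_\psi=-\psi$ in $\mathcal{D}'(\R^n)$ and that $R_\psi$ vanishes at infinity.

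Next I would verify that $L_\psi$ enjoys the same two properties. For the distributional identity, given $\varphi\in C_0^\infty(\R^n)$, Fubini (justified by $\Gamma\in\Lloc{\R^n\times\R^n}$ from Proposition \ref{app:propIntGamma}(i) and the compact supports of $\psi$ and $\LL\varphi$) gives
\[
 \int L_\psi\,\LL\varphi\,\d x
  = \int \psi(y)\,\biggl(\int\Gamma(x;y)\,\LL\varphi(x)\,\d x\biggr)\,\d y
  = \int \psi(y)\,\Lambda_{\LL\varphi}(y)\,\d y
  = -\int \psi\,\varphi\,\d y,
\]
the last step being Corollary \ref{app:corGammaRightInv} in its form integrated against $\psi$ (so that the ``a.e.''\,there is harmless). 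Invoking $\LL^{*}=\LL$, this is precisely $\LL L_\psi=-\psi$ in $\mathcal{D}'(\R^n)$. Vanishing of $L_\psi$ at infinity would then be extracted from a dominated-convergence argument based on the kernel bound \eqref{sec.two_2:eq_mainEstimG} and Proposition \ref{sec.two_2:propWellGammaBis}(iii).

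Since $\LL(L_\psi-R_\psi)=0$ in $\mathcal{D}'(\R^n)$ and $\LL$ is $C^\infty$-hypoelliptic, $L_\psi-R_\psi$ admits a smooth representative which is $\LL$-harmonic on all of $\R^n$ and vanishes at infinity. The weak maximum principle for $\LL$ (valid since $\LL(1)=0$ and $\LL$ is a H\"ormander sum of squares, via exhaustion by Euclidean balls) forces $L_\psi\equiv R_\psi$. Consequently
\[
 \int_{\R^n}\bigl[\Gamma(x;y)-\Gamma(y;x)\bigr]\,\psi(y)\,\d y = 0
 \quad\text{for every $x\in\R^n$ and every $\psi\in C_0^\infty(\R^n)$,}
\]
which gives $\Gamma(x;\cdot)=\Gamma(\cdot;x)$ a.e.\,on $\R^n$ for each $x$. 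The joint continuity of $\Gamma$ off the diagonal (Proposition \ref{sec.two_2:propWellGammaBis}(i)) finally upgrades this a.e.\,identity to the pointwise equality \eqref{app:eq_SymGammaThm}.

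The main technical obstacle will be the uniform-in-$y\in\mathrm{supp}(\psi)$ domination required for $L_\psi$ to vanish at infinity: while Proposition \ref{sec.two_2:propWellGammaBis}(iii) provides pointwise vanishing of $\Gamma(\cdot;y)$ for each $y$, passing to the integral in $y$ needs the explicit kernel bound \eqref{sec.two_2:eq_mainEstimG}. Once this uniformity is in hand, the remainder of the argument is the by-now-standard uniqueness-through-maximum-principle scheme already used in Corollary \ref{app:corGammaRightInv}.
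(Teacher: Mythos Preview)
Your argument is correct and close in spirit to the paper's, but the organization is genuinely different. The paper does \emph{not} average against a test function $\psi$; instead it first shows (via Corollary \ref{app:corGammaRightInv} applied to a countable dense family in $C_0^\infty$ and a separability argument) that $\LL\Gamma(\cdot;x)=-\mathrm{Dir}_x$ holds for every $x$ outside a Lebesgue-null set $E$. For such $x$ it then compares $\Gamma(x;\cdot)$ and $\Gamma(\cdot;x)$ directly: their difference is $\LL$-harmonic, continuous off $x$, and vanishes at infinity (using only the \emph{pointwise} statements of Proposition \ref{sec.two_2:propWellGammaBis}(ii)--(iii)), so the Strong Maximum Principle gives equality; continuity off the diagonal then removes the exceptional set $E$.

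What you gain by integrating against $\psi$ is that the ``a.e.'' in Corollary \ref{app:corGammaRightInv} becomes harmless without any separability trick, which is tidy. What you pay for it is exactly the point you flagged: you need $L_\psi(x)\to 0$ as $\|x\|\to\infty$, i.e.\ a version of Proposition \ref{sec.two_2:propWellGammaBis}(iii) that is \emph{uniform} for $y$ in the compact set $\mathrm{supp}(\psi)$. This is not stated in the paper, but it does follow from the same change of variables $\eta=\Psi_{x,y}^{-1}(u)$ together with the bound $\Gamma_\G(x,0;y,\Psi_{x,y}^{-1}(u))\leq \mathbf{c}\,N(u)^{2-Q}$ (independent of $x,y$) and the fact that $(x,0)^{-1}\star(y,\Psi_{x,y}^{-1}(u))\to\infty$ as $\|(x,u)\|\to\infty$ uniformly for $y$ in compacta; so your outline is sound. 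By contrast, the paper's route only needs the pointwise vanishing already recorded in Proposition \ref{sec.two_2:propWellGammaBis}, at the cost of the extra separability step.
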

  \begin{proof}
   We prove the existence of a measurable
   set $E\subseteq\R^n$, with $0$ Lebesgue measure and 
   \begin{equation} \label{app:eq_LGammaxStepI}
    \Lop\Gamma(\cdot;x) = -\mathrm{Dir}_x,
    \quad \text{for every $x\in\R^n\setminus E$}.
   \end{equation}
   To this end we observe that, the space $C^\infty_0(\R^n,\R)$ being separable
   (with the usual topology), there exists a countable dense
   set $\mathcal{F}\subseteq C_0^\infty(\R^n,\R)$; moreover,
   thanks to Corollary
   \ref{app:eq_LambdaLphi}, for every $\varphi\in \mathcal{F}$
  it is possible to find a measurable set $E(\varphi)$, with zero Lebesgue measure,
   such that
   $$\int_{\R^n}\Gamma(y;x)\,\Lop\varphi(y)\,\d y = -\varphi(x),
   \quad \text{for every $x\in\R^n\setminus E(\varphi)$}.$$
   We then set $E:=\bigcup_{\varphi\in\mathcal{F}}E(\varphi)$.
   Since $\mathcal{F}$ is countable and $E(\varphi)$ has measure $0$
   for every $\varphi\in\mathcal{F}$, we see that
   $E$ has measure $0$ as well; furthermore, for every $x\in\R^n\setminus E$, we have
   $$\int_{\R^n}\Gamma(y;x)\,\Lop\varphi(y)\,\d y = -\varphi(x),
   \quad \text{for every $\varphi\in\mathcal{F}$}.$$
   This proves that, for every $x\notin E$,
   the distribution $\Lop\Gamma(\cdot;x)$ coincides
   with $-\mathrm{Dir}_x$ on $\mathcal{F}$, so that, the latter being dense,
   we immediately obtain the claimed \eqref{app:eq_LGammaxStepI}.

   We now consider, for $x\notin E$,
   the function $u_x := \Gamma(x;\cdot)-\Gamma(\cdot;x)$.
   Obviously, $u_x\in\Lloc{\R^n}$ (since the same is true of
   $\Gamma(x;\cdot)$ and $\Gamma(\cdot;x)$) and, thanks to \eqref{app:eq_LGammaxStepI},
   we have
   $$\Lop u_x = \Lop\Gamma(x;\cdot)-\Lop\Gamma(\cdot;x)
   = -\mathrm{Dir}_x + \mathrm{Dir}_x = 0, \quad \text{in $\mathcal{D}'(\R^n)$};$$
   as a consequence, $\Lop$ being $C^\infty$-hypoelliptic,
   there exists $h_x\in C^\infty(\R^n,\R)$ such that
   $$\text{$\Lop h_x = 0$ on $\R^n$ and $h_x = u_x$ almost everywhere on $\R^n$.}$$
   In particular, as $u_x$ is continuous on $\R^n\setminus\{x\}$
   and it vanishes at infinity
   (see Proposition \ref{sec.two_2:propWellGammaBis}),
   $$\text{$h_x(y) = u_x(y) = \Gamma(x;y)-\Gamma(y;x)$,
  for every $y\in\R^n\setminus\{x\}$;  $h_x(y)\to 0$ as $\|y\|\to\infty$.}$$
  We deduce
  from the Strong Maximum Principle for $\Lop$ that
  $h_x\equiv 0$ on $\R^n$, whence
  \begin{equation} \label{app:eqGammaSymQuasi}
    \Gamma(x;y) = \Gamma(y;x), \quad \text{for every
  $x\notin E$ and every $y\in\R^n\setminus\{x\}$}.
  \end{equation}
   To complete the proof, we have to show that the above identity
   actually holds out of the diagonal of $\R^n\times\R^n$.
   To this end, let $x,y\in\R^n$ with $x\neq y$ and let
   $r > 0$ be such that $y\notin B_r(x)$.
   Since $\R^n\setminus E$ is dense in $\R^n$ (as $E$ has measure $0$), there exists
   a sequence $\{x_j\}_j$ in $(\R^n\setminus E)\cap B_r(x)$ convergent to $x$ as $j\to\infty$;
   hence, identity \eqref{app:eqGammaSymQuasi} implies that
   $\Gamma(x_j;y) = \Gamma(y;x_j)$, for every $j\in\mathbb{N}$.
   From this, as $\Gamma$ is continuous out of the diagonal of
   $\R^n\times\R^n$, we  deduce that $\Gamma(x;y) = \Gamma(y;x)$.
  \end{proof}
  \begin{cor}\label{app:cor_SmoothGammaBis}
   If $x\in\RN$, then $\Gamma(\cdot;x)\in C^\infty(\R^n\setminus\{x\},\R)$, and
   $\Lop \Gamma(\cdot; x) = 0$ outside $x$.
  \end{cor}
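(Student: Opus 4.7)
The plan is to derive this corollary as an immediate consequence of two results already established in the excerpt: the symmetry of $\Gamma$ (Theorem \ref{app:thmSymGamma}) and the smoothness/$\Lop$-harmonicity of $\Gamma(x;\cdot)$ away from the pole (Corollary \ref{sec.two_2:corSmoothGamma}). There is essentially no new analytic content; the work has already been done for the second argument, and symmetry transfers it to the first.

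More precisely, fix $x\in\R^n$ (the displayed $\RN$ in the statement is understood as $\R^n$, which is the only ambient space on which $\Gamma$ lives in its first variable). By Theorem \ref{app:thmSymGamma}, for every $y\in\R^n\setminus\{x\}$ we have
\[
 \Gamma(y;x)=\Gamma(x;y).
\]
Thus, as a function of $y$ on $\R^n\setminus\{x\}$, the map $\Gamma(\cdot;x)$ coincides pointwise with $\Gamma(x;\cdot)$. Applying Corollary \ref{sec.two_2:corSmoothGamma} to the latter, I conclude at once that $\Gamma(\cdot;x)\in C^\infty(\R^n\setminus\{x\},\R)$ and that $\Lop\Gamma(\cdot;x)=\Lop\Gamma(x;\cdot)=0$ on $\R^n\setminus\{x\}$.

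There is no real obstacle here: the only subtlety is to notice that symmetry is a pointwise identity valid for \emph{every} pair $(x,y)$ off the diagonal (not merely almost everywhere), which is precisely the content established at the end of the proof of Theorem \ref{app:thmSymGamma} via a density argument combined with the continuity of $\Gamma$ off the diagonal (Proposition \ref{sec.two_2:propWellGammaBis}). Once this is in hand, both the regularity claim and the $\Lop$-harmonicity claim are immediate transcriptions of the analogous properties in the second variable.
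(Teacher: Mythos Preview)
Your proposal is correct and matches the paper's intended argument: the corollary is stated without proof immediately after the symmetry theorem, precisely because it follows at once from Theorem \ref{app:thmSymGamma} combined with Corollary \ref{sec.two_2:corSmoothGamma}, exactly as you explain.
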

  \begin{thm} \label{app:thm_smoothGammaxy}
   The function $\Gamma$ is smooth out of the diagonal of $\RN\times\RN$.
  \end{thm}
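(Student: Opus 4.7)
The plan is to deduce joint smoothness of $\Gamma$ on $\{x\neq y\}$ from a Hörmander-style hypoellipticity argument on the product space $\R^n\times\R^n$. Specifically, I will consider the doubled operator
\[
 \mathcal{M} := \LL^{(x)} + \LL^{(y)}
 = \sum_{i=1}^m \bigl(X_i^{(x)}\bigr)^2 + \sum_{i=1}^m \bigl(X_i^{(y)}\bigr)^2
\]
on $\R^{2n}$, where $X_i^{(x)}$ (resp.\ $X_i^{(y)}$) denotes the vector field $X_i$ acting only on the first (resp.\ second) group of $n$ variables. Then $\mathcal{M}$ is itself a sum of squares of $2m$ smooth vector fields, and since $[X_i^{(x)},X_j^{(y)}]=0$ identically, the Lie algebra generated by these $2m$ fields evaluated at any point $(x_0,y_0)$ is the direct sum of $\mathrm{Lie}\{X_1,\ldots,X_m\}|_{x_0}$ and $\mathrm{Lie}\{X_1,\ldots,X_m\}|_{y_0}$. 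By Remark \ref{sec.two_1:remHormander}, Hörmander's condition holds at every point of $\R^n$, hence $\mathcal{M}$ satisfies Hörmander's condition at every point of $\R^{2n}$ and is therefore $C^\infty$-hypoelliptic.

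Next I will prove that $\mathcal{M}\Gamma=0$ in $\mathcal{D}'(\{x\neq y\})$. Since $\Gamma\in\Lloc{\R^n\times\R^n}$ by Proposition \ref{app:propIntGamma}, it defines a distribution on the open set $\Omega:=\{(x,y)\in\R^n\times\R^n:x\neq y\}$. Fix $\varphi\in C_0^\infty(\Omega)$. By Fubini,
\[
 \int_{\R^n\times\R^n}\Gamma(x;y)\,\LL^{(x)}\varphi(x,y)\,\d x\,\d y
 = \int_{\R^n}\left(\int_{\R^n}\Gamma(x;y)\,\LL_x\varphi(x,y)\,\d x\right)\d y.
\]
For each fixed $y\in\R^n$, the support of $x\mapsto\varphi(x,y)$ is contained in $\R^n\setminus\{y\}$, where $\Gamma(\cdot;y)$ is smooth and $\LL$-harmonic by Corollary \ref{app:cor_SmoothGammaBis}; by the (formal) self-adjointness of $\LL$ (footnote in Theorem \ref{thm.riassuntivoooo}) and a classical integration by parts (legitimate since $\Gamma(\cdot;y)$ is smooth on a neighborhood of $\mathrm{supp}\,\varphi(\cdot,y)$), the inner integral equals $\int_{\R^n}\LL\Gamma(\cdot;y)(x)\,\varphi(x,y)\,\d x=0$. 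The analogous calculation with $\LL^{(y)}$ uses Corollary \ref{sec.two_2:corSmoothGamma} instead. Adding the two identities and invoking the self-adjointness of $\mathcal{M}$ yields $\langle\Gamma,\mathcal{M}\varphi\rangle=0$, which is precisely $\mathcal{M}\Gamma=0$ in $\mathcal{D}'(\Omega)$.

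Finally, by the hypoellipticity of $\mathcal{M}$ established in the first step, the equation $\mathcal{M}\Gamma=0$ on the open set $\Omega$ forces $\Gamma$ to coincide (a.e., and hence everywhere by continuity out of the diagonal, see Proposition \ref{sec.two_2:propWellGammaBis}) with a function in $C^\infty(\Omega,\R)$. This is the desired joint smoothness.

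The only delicate step is the distributional identity $\mathcal{M}\Gamma=0$ on $\Omega$: one must carefully verify that Fubini applies (guaranteed by $\Gamma\in\Lloc{\R^n\times\R^n}$ and the compactness of $\mathrm{supp}\,\varphi$) and that the slice integration by parts produces no boundary contribution (guaranteed by the fact that, for each fixed $y$, the set $\mathrm{supp}\,\varphi(\cdot,y)$ is a compact subset of $\R^n\setminus\{y\}$, on which $\Gamma(\cdot;y)$ is classically smooth and $\LL$-harmonic). Once this identity is in hand, Hörmander's hypoellipticity theorem applied to $\mathcal{M}$ closes the argument with no further work.
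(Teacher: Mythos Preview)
Your proposal is correct and follows essentially the same route as the paper: both introduce the doubled H\"ormander operator $\sum_j X_j^{(x)\,2}+\sum_j X_j^{(y)\,2}$ on $\R^n\times\R^n$, verify its hypoellipticity via the product structure and Remark \ref{sec.two_1:remHormander}, and then use the separate $\LL$-harmonicity of $\Gamma$ in each variable (Corollaries \ref{sec.two_2:corSmoothGamma} and \ref{app:cor_SmoothGammaBis}) together with continuity off the diagonal to conclude. Your Fubini/integration-by-parts argument for the distributional identity $\mathcal{M}\Gamma=0$ on $\Omega$ is in fact a welcome clarification of a step the paper states only pointwise.
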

  \begin{proof}
   We introduce the $2m$ vector fields $\widetilde{X}_1(x,y),\ldots,\widetilde{X}_m(x,y),\widetilde{Y}_1(x,y),\ldots,\widetilde{Y}_m(x,y)$,
  operating on $(x,y)\in \RN\times\RN$, defined as follows
  $$\widetilde{X}_i(x,y) = X_i(x), \quad \widetilde{Y}_i(x,y) = X_i(y)\qquad \text{for every $i = 1,\ldots,m $.}$$
    We then set $\widetilde{\Lop} := \sum_{j = 1}^m(\widetilde{X}_j^2+\widetilde{Y}_j^2)$.
   Obviously, $\widetilde{\Lop}$ has smooth coefficients; moreover,
   since $[\widetilde{X}_i,\widetilde{Y}_j] = 0$ for every $i,j = 1,\ldots, m$, it is immediate to see that
   $\widetilde{\Lop}$ is a H\"ormander operator on $\RN\times\RN$, hence $C^\infty$-hypoelliptic.
   On the other hand, since Corollaries \ref{sec.two_2:corSmoothGamma}
   and \ref{app:cor_SmoothGammaBis} imply that
   $$\widetilde{\Lop}\Gamma(x;y) = \Lop \Gamma(\cdot;y)
   + \Lop \Gamma(x;\cdot) = 0, \quad \text{for every
   $x,y\in\RN$ with $x\neq y$},$$
   and since $\Gamma$ is continuous out of the diagonal of $\RN\times\RN$
   (see Proposition \ref{sec.two_2:propWellGammaBis}),
   we infer that
   $\Gamma$ is smooth on the same set.
  \end{proof}

 
 \end{document}